\documentclass[12pt]{article}
\usepackage[top=1in, bottom=1in, left=1in, right=1in]{geometry}

\usepackage{xcolor}
\usepackage{geometry}
\usepackage{amssymb,bm}
\usepackage{amsmath}
\usepackage{amsthm}
\usepackage{graphicx}
\usepackage{hyperref}
\usepackage{caption}
\usepackage{float} 
\hypersetup{pdfborder=0 0 0}


\newtheorem{theorem}{{\sc Theorem}}[section]

\newtheorem{lemma}[theorem]{{\sc Lemma}}
\newtheorem{corollary}[theorem]{Corollary}
\newtheorem{remark}[theorem]{Remark}

\newtheorem{conjecture}[theorem]{Conjecture}

\newcommand\restr[2]{{ \left.\kern-\nulldelimiterspace #1 \vphantom{\big|} \right|_{#2}}}
\newcommand{\RR}{\mathbb{R}}
\newcommand{\ZZ}{\mathbb{Z}}

\newcommand{\CA}{\mathcal{A}}

\newcommand{\CM}{\mathcal{M}}
\newcommand{\CI}{\mathcal{I}}
\newcommand{\CG}{\mathcal{G}}

\newcommand{\CR}{\mathcal{R}}
\newcommand{\CL}{\mathcal{L}}
\newcommand{\CU}{\mathcal{U}}
\newcommand{\CB}{\mathcal{B}} 
\newcommand{\n}{\noindent}
\newcommand{\comment}[1]{}
\renewcommand{\div}{\mathrm{div}}
\newcommand{\im}{\mathfrak{Im}}
\newcommand{\re}{\Re\mathfrak{e}}
\newcommand{\jump}[1]{\lbrack\!\lbrack #1 \rbrack\!\rbrack}

\title{Far field broadband approximate cloaking for the Helmholtz equation with a Drude-Lorentz refractive index}
\author{Fioralba Cakoni\footnote{Department of Mathematics, Rutgers University, New Brunswick, NJ, USA (fc292@math.rutgers.edu)},  \qquad Narek Hovsepyan\footnote{Department of Mathematics, Rutgers University, New Brunswick, NJ, USA (narek.hovsepyan@rutgers.edu)} \quad and \quad Michael S. Vogelius\footnote{Department of Mathematics, Rutgers University, New Brunswick, NJ, USA (vogelius@math.rutgers.edu)}}
\date{}

\begin{document}
\maketitle

\begin{abstract}
This paper concerns the analysis of a passive, broadband approximate cloaking scheme for the Helmholtz equation in ${\mathbb R}^d$ for $d=2$ or $d=3$. Using ideas from transformation optics, we construct an approximate cloak by ``blowing up" a small ball of radius $\epsilon>0$ to one of radius $1$. In the anisotropic cloaking layer resulting from the ``blow-up" change of variables, we incorporate a Drude-Lorentz-type model for the index of refraction, and we assume that the cloaked object is a soft (perfectly conducting) obstacle. We first show that (for any fixed $\epsilon$) there are no real transmission eigenvalues associated with the inhomogeneity representing the cloak, which implies that the cloaking devices we have created will not yield perfect cloaking at any frequency, even for a single incident time harmonic wave. Secondly, we establish estimates on the scattered field due to an arbitrary time harmonic incident wave. These estimates show that, as $\epsilon$ approaches $0$, the $L^2$-norm of the scattered field outside the cloak, and its far field pattern, approach $0$ uniformly over any bounded band of frequencies. In other words: our scheme leads to broadband approximate cloaking for arbitrary incident time harmonic waves.

\end{abstract}

\section{Introduction}
\setcounter{equation}{0}
In this paper we analyze a passive, broadband approximate cloaking scheme for the Helmholtz equation in ${\mathbb R}^d$ for $d=2$ or $d=3$. Specifically, we are interested in making a bounded region approximately invisible to a far field observer and to  probing by incident fields at arbitrary frequencies, independently of the material inside this region. Using ideas from transformation optics we achieve this by surrounding the region  with a layer of an appropriate  anisotropic material. By including a layer of extremely high conductivity adjacent to the region, we may without loss of generality assume that the region we want to cloak is ``soft", that is, supports a homogeneous Dirichlet boundary condition. The approach of cloaking by mapping, also known as transformation optics, has been popularized by Pendry, Schuring and Smith \cite{PSS06} and Leonhardt \cite{Leon} for  Maxwell's equations. The basic idea is to make a singular change of variables which blows up a point (invisible to any probing incident wave) to a cloaked region. The same  idea had previously been used by Greenleaf, Lassas and Uhlmann to create anisotropic objects that were invisible to EIT \cite{GLU03} (see also \cite{GKLU09}). The singular nature of the perfect cloaks presents various difficulties: in practice this means they are hard to fabricate, and from the analysis point of view in some cases the rigorous definition of the corresponding electromagnetic fields is not obvious \cite{GKLU07, W08-2, W08}. To avoid the use of singular materials in the cloak, regularized schemes have been suggested \cite{KSVW08, KOV10, phys1, phys2}. The trade-off is that such schemes only lead to approximate cloaking. We refer the reader to \cite{ammari1, CsV22, GLU08, GV14} for work on enhancement of approximate cloaks.

To design a passive approximate cloaking device, we blow up a small ball $B_\epsilon$ of radius $\epsilon>0$ (the regularization parameter) to the ball $B_1$ of radius one, which represents the cloaked region. To be more precise we actually map $B_2\setminus B_\epsilon$ onto $B_2\setminus B_1$, keeping fixed the outer boundary $\partial B_2$. $B_2\setminus B_1$ represents the cloak. As result of this change of variables one obtains an anisotropic layer in $B_2\setminus B_1$. We include a Drude-Lorentz-type term (see e.g. \cite{J01}) in the refractive index of the cloaking layer. This results in a frequency dependent and complex valued index of refraction which is consistent with causality. Since the cloaked region $B_1$ is ``soft" we impose a zero Dirichlet boundary condition on the boundary $\partial B_1$. As mentioned earlier, this Dirichlet condition may be viewed as a limit of a highly conducting layer, and it thus may be interpreted as ``hiding" the contents  of $B_1$. A main focus of this  paper is to establish estimates on the scattered field outside the cloak in terms of the small parameter $\epsilon>0$ and the probing  frequencies. We remark that the choice of $B_1$ and $B_2\setminus{B_1}$ for the cloaked region and the cloak, respectively, is made for convenience and one can use more general domains in the change of variables.  We also note that, in the context of approximate cloaking for the Helmholtz equation (the frequency domain wave equation), the Drude-Lorentz model was previously used by Nguyen and Vogelius in \cite{NV16}. The Drude-Lorentz model takes into account the effect of the oscillations of free electrons on the electric permittivity by means of a simple harmonic oscillator model. When viewed in (complex) frequency domain, the refractive index associated with the Drude-Lorentz model may be extended analytically to the whole upper half plane. It is well-known that an immediate consequence of this is causality for the associated non-local time-domain wave equation, see \cite{J01, T56}. This property is most essential for the well-posedness (and the physical relevance) of this equation. Another well known consequence of this analyticity property are the so-called Kramers-Kronig relations between the real and the imaginary part of the refractive index (they are essentially related by Hilbert transforms). However, this fact is not explicitly used in our analysis.

We investigate two questions related to the scattering by the aforementioned cloak $B_2 \setminus B_1$. The first one is  whether, for a fixed $\epsilon>0$, there are wave numbers (proportional to frequencies) and incident fields for which the corresponding scattered field is zero, i.e., the cloak (and $B_1$) is perfectly invisible to this particular probing experiment. This question is related to the existence of real eigenvalues of the interior transmission eigenvalue problem defined on $B_2 \setminus B_1$ \cite{CCH16}, for which that part of the eigenfunction, which corresponds to the incident field, is extendable as a solution to the Helmholtz equation in all of ${\mathbb R}^d$ \cite{CV, CVX}. In particular, such non-scattering wave numbers, for which perfect cloaking is achieved for a particular incident field, form a subset of the real transmission eigenvalues. We prove that, real transmission eigenvalues do not exist for the inhomogeneity presented by the cloak, i.e., for the anisotropic inhomogeneity $B_2\setminus B_1$ with the complex-valued frequency dependent Drude-Lorentz term and a homogeneous Dirichlet condition on the inner boundary $\partial B_1$. In addition, we show that all the (complex) transmission eigenvalues, that lie outside a precisely characterized compact set of the lower half plane, form a countable set with no finite accumulation points outside this compact set. Supported by some computational evidence, we conjecture that a sequence of complex transmission eigenvalues accumulate at a point (as well as at its symmetric counterpart) on the boundary of this compact set. These points have imaginary part equal to $-1/2$, but real parts that depend on the resonant frequency of the Drude-Lorentz term. A complete analysis of the transmission eigenvalue problem for inhomogeneities with such a Drude-Lorentz term is still open. This eigenvalue problem, in addition to being non-selfadjoint, is nonlinear since the  Drude-Lorentz term involves the eigenvalue parameter in a non-linear fashion, and thus the known approaches do not apply \cite{CCH16}. If the Drude-Lorentz term is not present, the existence of an infinite set of  real transmission eigenvalues accumulating at $+\infty$ for (anisotropic) inhomogeneities containing a Dirichlet obstacle is proven in \cite{CCH12, CKW21}. Secondly, although perfect cloaking is impossible at any frequency (even for a single incident wave) we prove that one can achieve approximate cloaking over any given finite band of wave numbers for sufficiently small $\epsilon>0$. In particular, we prove that provided the Drude-Lorentz resonant frequency $k_\epsilon$ is sufficiently large, more precisely $k_\epsilon^2>c_*\epsilon^{-3}$ for $d=3$, and $k_\epsilon^2 > c_* |\ln \epsilon|/\epsilon$ for $d=2$, then for any fixed $R$ the $L^2$-norm of the scattered field in $B_R\setminus B_2$ is of order $\epsilon$ in ${\mathbb R}^3$ and of order $1/|\ln \epsilon|$ in ${\mathbb R}^2$, with a constant depending on the given band of wave numbers, $c_*$ and $R$.
These estimates hold for a large class of incident waves, including plane waves and their superpositions (Herglotz waves). We note that point source waves with sources outside the cloak, as well as their superpositions would also be admissible. Furthermore, we prove that the far field pattern is uniformly $O(\epsilon)$ in ${\mathbb R}^3$ and $O(1/|\ln \epsilon|)$ in ${\mathbb R}^2$, with constants depending on the given band of wave numbers. These latter results are obtained by estimating the norm of the Lippmann-Schwinger  volume integral over $B_2\setminus B_1$ and using scattering estimates adapted from \cite{NV12-1}. We should mention that cloaking via change of variables for the Helmholtz equation at any frequency is investigated in \cite{N10, N12,  NV12, NV12-1, NV16}, but in these papers the region is cloaked to an active source compactly  supported in the exterior of the cloak. The scattering problem with incident field cannot be written in this framework. In fact, in that case the scattered field may be viewed as satisfying an inhomogeneous Helmholtz equation with a source given by the incident field, but this source is supported inside the cloak. Finally let us mention that perfect cloaking for the quasi-static Helmholtz equation (i.e., at zero frequency) with incident plane wave is investigated in \cite{CM17}. One of the results proven there, namely that perfect cloaking is only possible at a discrete set of frequencies is entirely consistent with the fact that we in the present context show that there are no real transmission eigenvalues. Due to the lack of real transmission eigenvalues the lower bounds on cloaking effects provided in \cite{CM17} are not very relevant here. In contrast our analysis demonstrates the possibility of broadband approximate cloaking in a certain (constitutive) regime.

\section{Preliminaries}
\setcounter{equation}{0}

Let $B_r \subset \RR^d$, $d=2$ or $d=3$ denote the open ball of radius $r>0$ centered at the origin and let $S_r = \partial B_r$. For a small parameter $\epsilon>0$ consider the following continuous and piecewise smooth mapping:

\begin{equation} \label{F}
F(x) = 
\begin{cases}
x, \hspace{1in} &x \in \RR^d \setminus B_2
\\[.05in]
\displaystyle \left( \frac{2-2\epsilon}{2-\epsilon} + \frac{|x|}{2-\epsilon} \right) \frac{x}{|x|}, \qquad  &x \in B_2 \setminus B_\epsilon
\end{cases}
\end{equation}

\n For simplicity of notation we will suppress the dependence of $F$ on the parameter $\epsilon$. Note that $F$ maps $B_2\setminus B_\epsilon$ onto $B_2\setminus B_1$, $S_\epsilon$ onto $S_1$,  and that $F(x)=x$ on $S_2$. Now, we design a cloaking device, occupying  $B_2\setminus B_1$, to approximately cloak the (soft) region $B_1$. We incorporate a Drude-Lorentz type term to account for a more physically relevant nonlinear dependence of the index of refraction on wavenumber. The constitutive material properties are thus given by 

\begin{equation} \label{Ac and qc}
A_{c}(x); \ q_{c}(x,k) = 
\begin{cases}
I; \ 1, \hspace{1in} &x \in \RR^d \setminus B_2
\\[.05in]
F_* I; \ F_*1 + \sigma_\epsilon(k), & x \in  B_2 \setminus B_1, 
\end{cases}
\end{equation}

\n where $I$ denotes the $d \times d$ identity matrix and $\sigma_\epsilon$ is the Drude-Lorentz term given  by 

\begin{equation} \label{sigma(k)}
\sigma_\epsilon(k) = \frac{1}{k_\epsilon^2 - k^2 - ik}~,
\end{equation}
\n cf. \cite{J01}, page 331. Here $k_\epsilon > \frac{1}{2}$ represents the so-called resonant frequency  of the Drude-Lorentz model. $F_*$ denotes the push-forward by the map $F$, defined by
\begin{equation*}
F_* A (y) = \frac{DF(x) A(x) DF^T(x)}{|\det DF(x)|}, \qquad F_* q(y) = \frac{q(x)}{|\det DF(x)|}, \qquad x=F^{-1}(y)~,  
\end{equation*}
\n for a matrix-valued function $A$, and for a scalar function $q$, respectively. The definition of the push-forward is motivated by the following change of variables property, which can be proven by straightforward calculations (cf. \cite{GLU03,KSVW08}).

\begin{lemma} \label{LEM change of var} 
Let $F$ be as defined in (\ref{F}). Assume $A \in \left[L^\infty (B_2\setminus \overline{B_\epsilon}) \right]^{d \times d}$ and $q \in L^\infty(B_2\setminus \overline{B_\epsilon})$. Then $u \in H^1(B_2\setminus \overline{B_\epsilon})\cap \{ u=0 \hbox{ on } S_\epsilon \}$ solves the equation

\begin{equation*}
\div(A \nabla u) + q u  = 0, \qquad \text{in} \quad B_2\setminus \overline{B_\epsilon} ~,
\end{equation*}

\n iff $v = u \circ F^{-1} \in H^1(B_2\setminus \overline{B_1})\cap \{ u=0 \hbox{ on } S_1\}$ solves

\begin{equation*}
\div(F_*A \nabla v) + F_*q u  = 0, \qquad \text{in} \quad B_2\setminus \overline{B_1}~.
\end{equation*}

\n The functions $u$ and $v$ satisfy the boundary relations 

\begin{equation}
u = v, \quad \text{and} \quad A \nabla u \cdot \nu = F_*A \nabla v \cdot \nu, \qquad \text{on} \quad S_2~,
\end{equation}

\n where $\nu$ denotes the unit outward normal vector on $S_2$ and the equality of the conormal derivatives is understood in the sense of distributions in $H^{-\frac{1}{2}}(S_2)$.

\n Furthermore\footnote{Since similar formulas hold for $F_*\left[(F^{-1})_* B\right]$ and $F_*\left[(F^{-1})_* p\right]$ it follows that $(F^{-1})_*= (F_*)^{-1}$, and for that reason we sometimes use the notation $F_*^{-1}$ for both.}
$$
(F^{-1})_*\left[ F_* A \right]=A~,~\hbox{ and }~ (F^{-1})_*\left[ F_* q \right]=q~. 
$$
\end{lemma}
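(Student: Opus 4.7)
The plan is a direct variational argument based on the chain rule and the standard change-of-variables formula. On $B_2\setminus\overline{B_\epsilon}$ the map $F$ is a $C^\infty$ diffeomorphism onto $B_2\setminus\overline{B_1}$ with Jacobian and inverse Jacobian both uniformly bounded, so composition with $F^{-1}$ preserves $H^1$ and the candidate $v:=u\circ F^{-1}$ lies in $H^1(B_2\setminus\overline{B_1})$. Since $F$ sends $S_\epsilon$ bijectively onto $S_1$ and equals the identity on $S_2$, the Dirichlet condition $u=0$ on $S_\epsilon$ passes at the trace level to $v=0$ on $S_1$, and $u|_{S_2}=v|_{S_2}$ holds automatically.

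First I would write the weak formulation. For any test function $\varphi\in H^1(B_2\setminus\overline{B_\epsilon})$ with $\varphi=0$ on $S_\epsilon$, setting $\psi:=\varphi\circ F^{-1}$, the equation for $u$ is equivalent to
\begin{equation*}
-\int_{B_2\setminus\overline{B_\epsilon}} A\nabla u\cdot\nabla\varphi\,dx + \int_{B_2\setminus\overline{B_\epsilon}} q\,u\,\varphi\,dx + \langle A\nabla u\cdot\nu,\,\varphi\rangle_{S_2}=0,
\end{equation*}
and analogously in $B_2\setminus\overline{B_1}$ for $v$ with $F_*A$, $F_*q$. Then I change variables $y=F(x)$. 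The chain rule gives $\nabla_x u(x)=DF(x)^T\nabla_y v(y)$ and $\nabla_x\varphi(x)=DF(x)^T\nabla_y\psi(y)$, together with $dx=|\det DF(x)|^{-1}dy$, so
\begin{equation*}
\frac{A(x)\,DF(x)^T\nabla v\cdot DF(x)^T\nabla\psi}{|\det DF(x)|}
=\frac{DF(x)\,A(x)\,DF(x)^T}{|\det DF(x)|}\nabla v\cdot\nabla\psi=(F_*A)\nabla v\cdot\nabla\psi,
\end{equation*}
and likewise $q u\varphi\,dx=(F_*q)\,v\,\psi\,dy$; these are exactly the definitions of $F_*A$ and $F_*q$. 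Since $F$ is the identity on $S_2$, the boundary pairing on $S_2$ is preserved verbatim. First restricting to $\varphi$ with $\varphi=0$ on $S_2$ yields the transformed equation in the interior; then allowing $\varphi$ with nontrivial trace on $S_2$ and comparing the two weak formulations gives the conormal identity $A\nabla u\cdot\nu=F_*A\nabla v\cdot\nu$ in $H^{-1/2}(S_2)$. The reverse implication is identical with the roles of $F$ and $F^{-1}$ swapped.

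For the final assertion in the footnote I would compute directly: with $y=F(x)$, $DF^{-1}(y)=DF(x)^{-1}$ and $|\det DF^{-1}(y)|=|\det DF(x)|^{-1}$, substituting the definition of $F_*A$ into the definition of $(F^{-1})_*$ produces the telescoping product $DF(x)^{-1}\,DF(x)\,A(x)\,DF(x)^T\,(DF(x)^{-1})^T=A(x)$ after the Jacobian factors cancel, and similarly for $F_*q$. There is no serious obstacle here: the $L^\infty$ hypotheses on $A,q$ together with the uniform smoothness of $F$ on the closed annulus make every change of variables rigorous, and the only mildly delicate point—interpreting the conormal derivative equality on $S_2$ distributionally in $H^{-1/2}(S_2)$—is handled automatically by using test functions of nonzero trace in the weak formulation.
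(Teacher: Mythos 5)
Your proof is correct and complete. The paper itself offers no proof of this lemma — it merely asserts that the result "can be proven by straightforward calculations" and cites \cite{GLU03,KSVW08} — so your variational change-of-variables argument (chain rule $\nabla_x u = DF^T\nabla_y v$, Jacobian factor $dx = |\det DF|^{-1}dy$, identification of $F_*A$ and $F_*q$ in the transformed forms, trace preservation on $S_2$, and the telescoping cancellation for $(F^{-1})_*[F_*\,\cdot\,]$) is exactly the standard computation the authors had in mind but chose not to spell out.
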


\n Let $u^i$ be an incident field at a given wave number $k>0$ (we suppress the dependence of $u^i$ on $k$ for the ease of notation), i.e.,

\begin{equation}
\Delta u^i + k^2 u^i = 0, \qquad \text{in} \quad \RR^d.
\end{equation}

\n Given the incident wave $u^i$ and the ``cloaked" soft obstacle $B_1$, consider now the associated Helmholtz scattering problem. If $A_c$ and $q_c$ denote the constitutive material properties defined in (\ref{Ac and qc}), then the total field $u_c \in H^1_{loc}(\RR^d\setminus \overline{B}_1)$ is the unique solution to

\begin{equation} \label{u_c pde}
\begin{cases}
\div ( A_c \nabla u_c ) + k^2 q_c u_c = 0, \qquad \qquad &\text{in} \quad \RR^d \setminus \overline{B}_1,
\\
u_c = 0, & \text{on} \quad S_1~,
\end{cases}
\end{equation}

\n of the form

\begin{equation} \label{u_c def sc tr inc}
u_c =
\begin{cases}
u_c^t, \qquad \qquad &\text{in} \quad B_2\setminus \overline{B}_1~,
\\
u^i + u_c^s & \text{in} \quad \RR^d \setminus \overline{B}_2~,
\end{cases}
\end{equation}

\n where $u_c^t$ is the transmitted field and $u_c^s$ is the scattered field, which satisfies the Sommerfeld radiation condition

\begin{equation} \label{Sommerfeld cond}
\lim_{r \to \infty} r^{\frac{d-1}{2}} \left( \partial_r u_c^s - ik u_c^s \right) = 0, \quad \text{as} \quad r=|x| \to \infty~,
\end{equation}

\n uniformly in $\hat{x} = x / |x|$ (cf. \cite{CK19} for more details about the scattering problem).  As $u_c$ and its conormal derivative are continuous across $S_2$, the problem \eqref{u_c pde} can equivalently be written 

\begin{equation} \label{main u_c}
\begin{cases}
\Delta u_c^s + k^2 u_c^s = 0, \hspace{1in} &\text{in} \quad \RR^d \setminus \overline{B}_2
\\[.05in]
u_c^s \ \text{satisfies the outgoing radiation condition}
\\[.05in]
\nabla \cdot \left( A_c \nabla u_c^t \right) + k^2 q_c u_c^t = 0, &\text{in} \quad B_2 \setminus \overline{B}_1
\\[.05in]
\Delta u^i + k^2 u^i = 0, &\text{in} \quad \RR^d
\\[.05in]
u_c^t = u^i + u_c^s,  &\text{on} \quad S_2
\\[.05in]
A_c \nabla u_c^t \cdot \nu = \partial_{\nu} u^i + \partial_{\nu} u_c^s,  &\text{on} \quad S_2
\\[.05in]
u_c^t = 0, &\text{on} \quad S_1.
\end{cases}
\end{equation}

\n As the scattered field $u_c^s$ satisfies the constant coefficient Helmholtz equation, it is in fact real analytic and admits the following asymptotic behavior as $r \to \infty$:

\begin{equation} \label{far field}
u_c^s(x) = \frac{e^{ikr}}{r^{\frac{d-1}{2}}} u^\infty(\hat{x}) + O \left( r^{-\frac{d+1}{2}} \right),
\end{equation}

\n where the function $u^\infty$, defined on $S_1$, is the so-called far field pattern of the scattered field $u_c^s$. It is well-known that the vanishing of $u^\infty$ on $S_1$, implies the vanishing of the scattered field $u_c^s$ in $\RR^d \setminus \overline{B}_2$ (cf. Rellich's Lemma in \cite{CK19}). A non-trivial incident field $u^i$ and the wave number $k>0$ for which the corresponding far field pattern vanishes are referred to as non-scattering incident field and a non-scattering wave number, respectively. If we regard $u^i$ as a function defined in $B_2$, then from \eqref{main u_c} it is clear that at a non-scattering wave number $k>0$, there exist non-trivial functions $w_c = u_c^t$ and $v=u^i$ defined in $B_2 \setminus \overline{B}_1$ and $B_2$, respectively, such that

\begin{equation} \label{main transmission}
\begin{cases}
\nabla \cdot \left( A_c \nabla w_c \right) + k^2 q_c w_c = 0, &\text{in} \quad B_2 \setminus \overline{B}_1
\\[.05in]
\Delta v + k^2 v = 0, &\text{in} \quad B_2
\\[.05in]
w_c = v,  &\text{on} \quad S_2
\\[.05in]
A_c \nabla w_c \cdot \nu = \partial_{\nu} v,  &\text{on} \quad S_2
\\[.05in]
w_c = 0, &\text{on} \quad S_1.
\end{cases}
\end{equation}

\n A wave number $k$ for which \eqref{main transmission} admits a non-trivial solution is called an interior transmission eigenvalue with the corresponding eigenfunction $(w_c, v)$. Thus, non-scattering wave numbers are necessarily real interior transmission eigenvalues \cite{CCH16}. Conversely, a real interior transmission eigenvalue $k>0$ is a non-scattering wave number if the eigenvector $v$ can be extended from $B_2$ to a solution of the Helmholtz equation in all of $\RR^d$ \cite{CVX, CV}.

\section{Main Results}
\setcounter{equation}{0}
For clarity and the reader's convenience we now state the main results of our paper.  The first theorem addresses the question whether our cloak provides a perfect cloaking of the region $B_1$ for even a single incident wave.  
\begin{theorem} \label{THM transmission} \mbox{}
Consider the interior transmission eigenvalue problem \eqref{main transmission}.
\begin{enumerate}
\item[(i)] There are no interior transmission eigenvalues  in $\RR \cup i\RR$.

\item[(ii)] $k \in \mathbb{C}$ is an interior transmission eigenvalue if and only if so is $-\overline{k}$.

\item[(iii)] Assume $k_\epsilon > \frac{1}{\sqrt{2}}$, let $\kappa = \sqrt{k_\epsilon^2 - \frac{1}{4}} - \frac{i}{2}$ and let $K$ be the shaded compact region in Figure~\ref{FIG evals}. The region $K$ is symmetric about the imaginary axis, the slanted line segment of the boundary in the right half-plane has the equation $\im k = - \re k$, the curved arc joining $\kappa$ to $k_\epsilon$ is given by $\re k = \sqrt{(\im k)^2 + \im k + k_\epsilon^2}$. Let $\CG$ denote the open set $\CG = \mathbb{C} \setminus K$. Then those interior transmission eigenvalues which lie inside $\CG$ form a discrete set (i.e., an at most countable set with no limit points in $\CG$).
\end{enumerate}
\end{theorem}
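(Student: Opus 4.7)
Part (ii) follows from a direct complex-conjugation argument: since $A_c$ and $F_*1$ are real-valued and a direct computation gives $\overline{\sigma_\epsilon(k)}=\sigma_\epsilon(-\overline{k})$, the conjugate pair $(\overline{w_c},\overline{v})$ solves \eqref{main transmission} at wavenumber $-\overline{k}$ whenever $(w_c,v)$ solves it at $k$. For part (i) on the real axis, I test the first equation in \eqref{main transmission} against $\overline{w_c}$ and the Helmholtz equation for $v$ against $\overline{v}$; the transmission conditions on $S_2$ make the boundary integrals cancel, leaving
\begin{equation*}
\int_{B_2\setminus B_1}A_c\nabla w_c\cdot\nabla\overline{w_c}-k^2\int_{B_2\setminus B_1}q_c|w_c|^2\;=\;\int_{B_2}|\nabla v|^2-k^2\int_{B_2}|v|^2.
\end{equation*}
The right-hand side is real, and $\im(k^2q_c)=k^3/[(k_\epsilon^2-k^2)^2+k^2]$ is nonzero for $k\in\RR\setminus\{0\}$; taking imaginary parts forces $w_c\equiv 0$, whereupon $v$ has vanishing Cauchy data on $S_2$ and $v\equiv 0$ in $B_2$ by unique continuation.

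On $i\RR\setminus\{0\}$ and at $k=0$ this absorption argument is empty, since $\sigma_\epsilon(i\tau)$ is real and strictly positive under the hypothesis $k_\epsilon>1/\sqrt{2}$. For these cases I will pass to the pulled-back formulation of Lemma~\ref{LEM change of var}: $u=w_c\circ F$ on $B_2\setminus\overline{B_\epsilon}$ solves $\Delta u+k^2(1+\sigma_\epsilon(k)J)u=0$ with $u|_{S_\epsilon}=0$ and $u=v,\ \partial_\nu u=\partial_\nu v$ on $S_2$, where $J=|\det DF|>0$. Setting $\phi=u-v$ on $B_2\setminus\overline{B_\epsilon}$, the vanishing of its Cauchy data on $S_2$ allows extension of $\phi$ by zero across $S_2$ into $\RR^d\setminus B_\epsilon$. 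At $k=0$ the extended $\phi$ is globally harmonic and vanishes outside $B_2$, so $\phi\equiv 0$ in $B_2\setminus\overline{B_\epsilon}$ by real-analyticity; then $v|_{S_\epsilon}=0$, and since $v$ is harmonic in $B_2$ the maximum principle in $B_\epsilon$ combined with unique continuation force $v\equiv 0$ throughout $B_2$. On the imaginary axis the extended $\phi$ satisfies a coercive modified Helmholtz equation with source proportional to $\sigma_\epsilon(i\tau)Jv$; the analogous conclusion is expected via a weighted energy identity combining the coercivity of the zero-order term, the Dirichlet condition on $S_\epsilon$, and the vanishing of $\phi$ outside $B_2$. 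I expect this step to be the most delicate part of (i), since direct energy testing does not yield a sign and a more refined identity is required.

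Part (iii) will be proved via the analytic Fredholm theorem. I will recast \eqref{main transmission} as $T(k)\psi=0$ with $T(k)$ a Fredholm operator of index zero, acting on the Hilbert space of admissible pairs $\psi=(w_c,v)\in H^1(B_2\setminus B_1)\times H^1(B_2)$ obeying $w_c|_{S_1}=0$ and $w_c|_{S_2}=v|_{S_2}$; the operator depends holomorphically on $k\in\CG$, since the only poles of $\sigma_\epsilon$ lie at $\kappa$ and $-\overline{\kappa}$ on $\partial K$. The natural splitting is a principal part given by the $A_c$-Dirichlet form on $w_c$ and the ordinary Dirichlet form on $v$, plus a compact zero-order remainder carrying the $k^2q_c$ and $k^2$ terms. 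The region $\CG$ is designed so that sign conditions on $\re(k^2)$ and $\re\sigma_\epsilon(k)$ yield coercivity of the full form: the slanted segment $\im k=-\re k$ is the locus $\re(k^2)=0$ in the right half-plane, while the curved arc $\re k=\sqrt{(\im k)^2+\im k+k_\epsilon^2}$ is the locus $\re(k_\epsilon^2-k^2-ik)=0$ on which $\re\sigma_\epsilon(k)$ changes sign. Once Fredholmness is established, invertibility at any one point of the connected set $\CG$ closes the argument: I take $k=i\tau$ with $|\tau|$ large (possible since $K$ is compact), whereupon part (i) gives $\ker T(i\tau)=\{0\}$, and the Fredholm index-zero property then yields invertibility. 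The main technical obstacle will be the precise verification of the coercive/compact splitting throughout $\CG$ and its degeneration exactly on $\partial K$, a calculation made intricate by the interplay between the anisotropy of $A_c=F_*I$ and the nonlinear $k$-dependence of the Drude-Lorentz term.
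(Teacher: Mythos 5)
Your part (ii) coincides with the paper's argument. For part (i) on the real axis your second-order energy identity is a legitimate and in fact more direct alternative: the sign of $\im(k^2 q_c)$ annihilates $w_c$ and unique continuation then kills $v$. Be aware, however, that the natural weak solutions of \eqref{main transmission} (after the change of variables) are only taken in $L^2_\Delta$; it is the \emph{difference} $u=w-v$ that lies in $H^1_\Delta(\mathcal{O})$, not $w$ and $v$ individually. Testing the two equations with $\overline{w_c}$ and $\overline{v}$ and cancelling boundary terms on $S_2$ therefore requires additional regularity justification that your proposal does not supply; the paper avoids the issue entirely by passing to the fourth-order formulation in the variable $u$.

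The genuine gap is the imaginary axis and $k=0$, and you identify it yourself: your second-order identity for $\phi=u-v$ has a source term and ``direct energy testing does not yield a sign.'' The missing idea is precisely the fourth-order reformulation you did not carry out. Eliminating $v$ via $v=\frac{1}{1-q}(\Delta+k^2q)u$ and applying $\Delta+k^2$ produces the equation $(\Delta+k^2)\frac{1}{1-q}(\Delta+k^2q)u=0$ in $\mathcal{O}$ together with natural jump conditions on $S_\epsilon$; its variational form with $\varphi=u$ at $k=i\tau$ reads
\begin{equation*}
0 \;=\; \int_{\mathcal{O}}\frac{1}{q-1}\bigl|\Delta u-\tau^2 u\bigr|^2\,dx \;+\; \tau^4\int_{B_2}|u|^2\,dx \;+\; \tau^2\int_{B_2}|\nabla u|^2\,dx,
\end{equation*}
where $q(x,i\tau)-1=\sigma_\epsilon(i\tau)\,|\det DF(x)|=\frac{|\det DF(x)|}{k_\epsilon^2+\tau^2+\tau}>0$ for every $\tau\in\RR$ as soon as $2k_\epsilon>1$. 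Every term is nonnegative, so $u\equiv 0$ for $\tau\neq 0$, and the $\tau=0$ case follows from $\Delta u=0$ in $\mathcal{O}$ plus the Cauchy data on $S_2$. Your hoped-for weighted identity at the second-order level does not have a substitute for this mechanism, and until you produce one this half of part (i) remains unproved.

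For part (iii) your overall strategy (analytic Fredholm plus invertibility at one reference point on $i\RR$, made available by part (i)) is the same in spirit as the paper's, and you correctly read off the two boundary curves of $K$ as the zero loci of $\re(k^2)$ and of $\re(k_\epsilon^2-k^2-ik)$. The paper, however, works on the Hilbert space $X$ coming from the fourth-order reformulation and establishes coercivity of a shifted form $\mathcal{A}_k(u,u)=\int_{\mathcal{O}}\frac{1}{1-q}|\Delta u+k^2u|^2+k^2\int_{B_2}|\nabla u|^2+\lambda\int_{B_2}|u|^2$ by controlling $\im\mathcal{A}_k$ or $\re\mathcal{A}_k$ region by region, with the compact perturbation carried by $-(k^4+\lambda)\int u\overline{\varphi}$. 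Your proposed $H^1\times H^1$ splitting with the $A_c$- and standard Dirichlet forms as principal part faces the same function-space obstruction noted above, and the ``coercive/compact splitting throughout $\mathcal{G}$'' that you defer is exactly the substance of the proof; as written this part is an outline rather than an argument.
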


\begin{figure}[h]
\center
\captionsetup{width=.7\linewidth}
\includegraphics[scale=0.6]{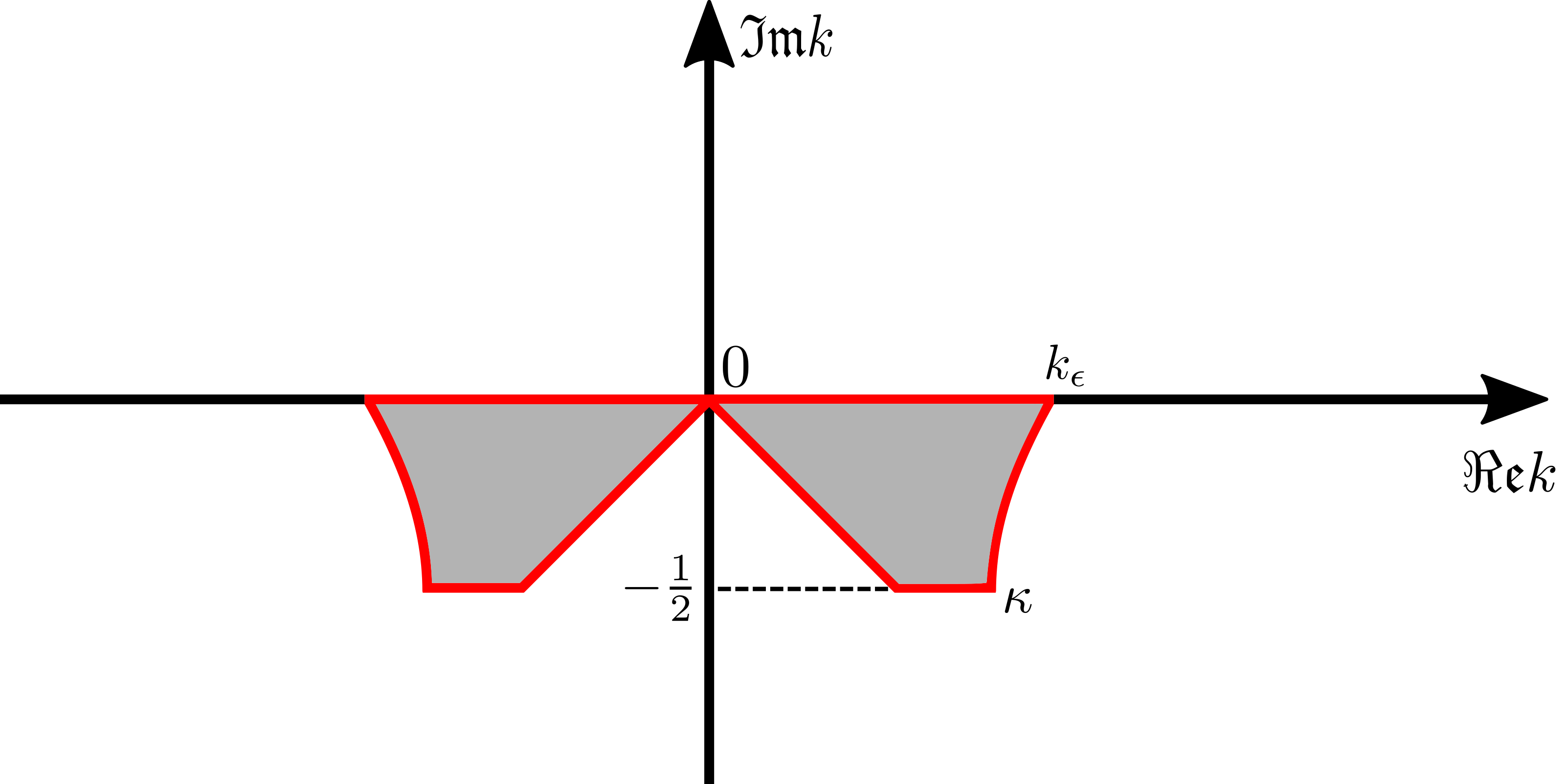}
\caption{The shaded compact region $K$, outside of which the interior transmission eigenvalues of \eqref{main transmission} form a discrete set.}
\label{FIG evals}
\end{figure}
\noindent
Part $(i)$ of Theorem~\ref{THM transmission} will be proven in Section~\ref{SECT trans eval}. As a consequence we conclude that perfect cloaking/non-scattering is impossible at any wave number $k>0$, since real transmission eigenvalues do not exist. Part $(ii)$ is an immediate consequence of the symmetry relation

$$\overline{\sigma_\epsilon(k)} = \sigma_\epsilon(-\overline{k})~, \qquad \forall \ k \in \mathbb{C}~.$$

\n As a result $q_c(x,k)$ has the same symmetry property and $k$ is a transmission eigenvalue of \eqref{main transmission} with eigenfunction $(w_c,v)$, if and only if so is $-\overline{k}$ with eigenfunction $(\overline{w_c},\overline{v})$. The proof of part $(iii)$ will be given in the Appendix since the discreteness of complex eigenvalues is not central to the cloaking discussion. The value $\kappa$ is one of the poles of $\sigma_\epsilon(k)$ (the other one is $-\overline{\kappa}$). Numerical evidence, presented in Section~\ref{SECT non discret}, indicates that it is a limit point for the set of transmission eigenvalues of \eqref{main transmission}. Being bold, we venture

\begin{conjecture} \label{CONJ} (Finite accumulation point of transmission eigenvalues) \mbox{}

\n Let $\kappa$ be defined as in part $(iii)$ of Theorem~\ref{THM transmission}. Then $\kappa$ is a limit point of transmission eigenvalues of \eqref{main transmission}.
\end{conjecture}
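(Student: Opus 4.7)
The plan is to exploit the full radial symmetry of the cloak coefficients to reduce \eqref{main transmission} to a family of one-dimensional transcendental equations indexed by angular mode, and then to analyze these equations in the singular limit $k \to \kappa$ (equivalently $\sigma_\epsilon(k) \to \infty$) by Liouville-Green/WKB asymptotics.

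First I would separate variables. Writing $F_*I = a(r)\hat{r}\hat{r}^T + b(r)(I-\hat{r}\hat{r}^T)$ with explicit strictly positive radial profiles $a,b$ on $[1,2]$, and expanding $w_c = R(r)Y(\hat{x})$ in spherical harmonics $Y$ of degree $\ell$ (or Fourier modes $e^{i\ell\theta}$ in $d=2$), with $v = c\,\phi_\ell(kr)Y$ the corresponding entire free-space solution in $B_2$, the interior equation in the cloak reduces to the Sturm-Liouville problem
\begin{equation*}
\bigl(r^{d-1}a(r)R'\bigr)' - \ell(\ell+d-2)\,r^{d-3}b(r)R + k^2 q_c(r,k)\,r^{d-1} R = 0, \qquad 1<r<2,
\end{equation*}
with $R(1)=0$. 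Matching $R(2)$ and $a(2)R'(2)$ to $\phi_\ell(2k)$ and $k\phi_\ell'(2k)$ yields a transcendental equation $\Delta_\ell(k)=0$, analytic in $k$ in a punctured neighborhood of $\kappa$, whose zeros are the transmission eigenvalues of \eqref{main transmission} in the $\ell$-th angular sector.

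Next, regarding $\sigma_\epsilon(k)$ as a large complex parameter and setting $\mu(k):=k\sqrt{\sigma_\epsilon(k)}$, so that $|\mu(k)|\to\infty$ as $k\to\kappa$, the leading balance in the cloak ODE is between the second-order term and $\mu^2 R$. Liouville-Green analysis then produces a fundamental pair
\begin{equation*}
R_\pm(r;k) = \alpha_\pm(r)\exp\bigl(\pm i\mu(k)\Phi(r)\bigr)\bigl(1+O(|\mu|^{-1})\bigr),\qquad \Phi(r) := \int_1^r \frac{ds}{\sqrt{a(s)}},
\end{equation*}
with amplitudes $\alpha_\pm$ determined by the transport equation. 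Imposing $R(1)=0$ selects the combination $R = R_+ - (\alpha_+(1)/\alpha_-(1))\,R_-$, and substituting into the matching condition at $r=2$ reduces $\Delta_\ell(k)=0$ to the asymptotic dispersion relation
\begin{equation*}
e^{-2i\mu(k)\Phi(2)} = C_\ell + O\bigl(|\mu(k)|^{-1}\bigr),
\end{equation*}
for an explicit nonzero constant $C_\ell$ built from $\alpha_\pm(1),\alpha_\pm(2)$ and $\phi_\ell(2\kappa),\phi_\ell'(2\kappa)$.

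The leading equation $e^{-2i\mu\Phi(2)}=C_\ell$ admits an arithmetic progression of roots $\{\mu_n^{(0)}\}$ in the $\mu$-plane, with spacing $\pi/\Phi(2)$ and constant imaginary part $\log|C_\ell|/(2\Phi(2))$; inverting $\mu(k)^2 = k^2/(k_\epsilon^2-k^2-ik)$ near the simple pole at $\kappa$ then yields, on the appropriate branch of the two-valued map $k\mapsto\mu(k)$, a corresponding sequence $k_n^{(0)}\to\kappa$ at rate $O(n^{-2})$. A Rouché/winding-number argument applied to $\Delta_\ell$ on shrinking disks around the $k_n^{(0)}$, combined with uniform control of the WKB remainder, would then upgrade these approximate roots to genuine zeros of $\Delta_\ell$ and thereby produce an infinite sequence of transmission eigenvalues accumulating at $\kappa$. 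The main obstacle is obtaining \emph{uniform} Liouville-Green error bounds for complex $\mu$ tending to infinity along the direction dictated by $k-\kappa$: one must rule out Stokes-line crossings on the real interval $[1,2]$ (which reduces to an analyticity condition on $1/\sqrt{a(r)}$ in a complex neighborhood of $[1,2]$), control the remainder uniformly as $k$ varies in the shrinking disks, and identify the correct sheet of the two-valued map $k\mapsto\mu(k)$ so that the resulting sequence is consistent with the numerical evidence in Section~\ref{SECT non discret}.
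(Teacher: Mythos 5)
The paper does not prove this statement --- it is offered explicitly as a \emph{conjecture}, supported only by numerical evidence in Section~\ref{SECT non discret}, so there is no proof to compare against. That said, your strategy is a reasonable program for attacking the problem, and it deserves some comments.

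The most interesting discrepancy is the \emph{mechanism}. You propose to fix the angular mode $\ell$ and produce infinitely many eigenvalues within that single mode accumulating at $\kappa$: since $\mu(k)=k\sqrt{\sigma_\epsilon(k)}\to\infty$ as $k\to\kappa$, the WKB phase wraps infinitely often in any punctured neighborhood of $\kappa$. By contrast, the paper's numerical evidence (and the conjecture as the authors envision proving it) takes \emph{one} eigenvalue $k_n$ per mode $n$ and lets $n\to\infty$; the Whittaker-function form of the radial solutions in Section~\ref{SECT non discret}, with parameter $|n|$ appearing both as an order and implicitly in the turning-point structure, is set up precisely for that large-order analysis. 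Your approach would, if it worked, yield a strictly stronger statement (accumulation inside a fixed mode), so the two are genuinely different routes, not the same proof.

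The gaps you flag at the end are the real ones and should not be dismissed as routine. In particular, $\sigma_\epsilon$ has a \emph{simple pole} at $\kappa$, so $\mu(k)\sim C(k-\kappa)^{-1/2}$ is two-valued; your purported arithmetic progression $\{\mu_n^{(0)}\}$ must land on a single branch, and it is not obvious the $k_n^{(0)}$ you get by inverting stay on that branch as you shrink the disk. Relatedly, the question of whether $\Delta_\ell$ even \emph{can} have zeros accumulating at $\kappa$ hinges on $\kappa$ being an essential/branch singularity of $\Delta_\ell$ rather than a pole or removable singularity --- if the latter, the identity theorem forbids accumulation and your construction must fail. You should argue this directly (e.g. from the branch point of $\sqrt{\sigma_\epsilon(k)+(2-\epsilon)^2}$ in the Whittaker representation) before investing in the Rouché step. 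Finally, for the WKB amplitudes $\alpha_\pm$ to be nonvanishing and the constant $C_\ell$ to be well defined and nonzero, you need $\phi_\ell(2\kappa)$ and $\phi_\ell'(2\kappa)$ not to vanish simultaneously and the transport equation to have no turning points in $[1,2]$; since the anisotropic profile $a(r)$ from the blow-up map is explicit (Lemma~\ref{LEM F}), this should be checkable, but it is a required verification, not a formality. In short: plausible sketch, acknowledged gaps, and a different mechanism from what the numerics in the paper suggest --- making the two sources of evidence complementary rather than redundant.
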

\vskip 5pt

\n We note that Theorem \ref{THM transmission} asserts nothing about potential interior transmission eigenvalues in the set $K \setminus \RR$. Their nature is a completely open problem.

Although perfect cloaking is impossible, we demonstrate that, under a suitable growth assumption on $k_\epsilon$, one can achieve approximate cloaking over any given finite band of wave numbers. We first state the main estimate on the scattered field including its explicit dependence on $k$ (and $\epsilon$). The broadband cloaking estimates follow as a corollary from this. We define
\begin{equation} \label{M}
M_{\epsilon, k} = \|F_*^{-1}q_c - 1\|_{L^\infty(B_2 \setminus B_\epsilon)}= \|F_*^{-1}\sigma_{\epsilon}(k)\|_{L^\infty(B_2 \setminus B_\epsilon)}~,
\end{equation}

\n where $F_*^{-1}$ denotes the push-forward by the map $F^{-1}$, and we set

\begin{equation} \label{a(k)}
a(k)=
\begin{cases}
1, \hspace{1.3in} & d=3~,
\\
\min\{1+|\ln k|, k^{-\frac{1}{4}}\}, & d=2~.
\end{cases}
\end{equation}

\begin{theorem} \label{THM main}
Let $R>2$ and $k_0>0$. Suppose $0<\epsilon k <k_0$ and suppose

\begin{equation} \label{u^i assumption 1}
\|u^i\|_{L^\infty(B_\epsilon)} + \epsilon \|\nabla u^i\|_{L^\infty(B_\epsilon)} \leq C~.
\end{equation}

\n Let $u^s_c$ be the scattered field from \eqref{main u_c}. There exists a constant $c=c(k_0, R)>0$ such that, if $k^2 a(k) M_{\epsilon, k} < c$ then

\begin{equation} \label{u_c^s 3d}
\|u_c^s\|_{L^2(B_R \setminus B_2)} \lesssim \epsilon + k^2a(k) M_{\epsilon, k} \|u^i\|_{L^2(B_R)}~, \qquad \qquad for \ d=3~,
\end{equation}

\n and

\begin{equation} \label{u_c^s 2d}
\|u_c^s\|_{L^2(B_R \setminus B_2)} \lesssim \frac{|H_0^{(1)}(k)|}{|H_0^{(1)}(\epsilon k)|} + k^2a(k) M_{\epsilon, k} \left( 1 +\|u^i\|_{L^2(B_R)} \right)~, \qquad \qquad for \ d=2~,
\end{equation}

\n where the implicit constants in \eqref{u_c^s 3d} and \eqref{u_c^s 2d} depend only on $R, k_0$ and $C$.
\end{theorem}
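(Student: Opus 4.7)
The plan is to transfer the problem back to the pre-blow-up geometry via Lemma \ref{LEM change of var}, recast it as a Lippmann-Schwinger integral equation on $\mathbb{R}^d\setminus\overline{B_\epsilon}$, and invert that equation by a Neumann series argument justified by the smallness condition $k^2 a(k) M_{\epsilon,k}<c$. The two scales of smallness then separate naturally: an $\epsilon$-small piece coming from reference scattering by a small Dirichlet ball, and an $M_{\epsilon,k}$-small piece coming from the cloaking perturbation.

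Concretely, I would set $\tilde u := u_c^t\circ F$ on $B_2\setminus\overline{B_\epsilon}$ and extend by $\tilde u := u^i+u_c^s$ on $\mathbb{R}^d\setminus\overline{B_2}$. By Lemma \ref{LEM change of var} and the transmission conditions on $S_2$ built into \eqref{main u_c}, $\tilde u\in H^1_{\mathrm{loc}}(\mathbb{R}^d\setminus\overline{B_\epsilon})$ solves
\[
\Delta\tilde u+k^2\bigl(1+\chi_{B_2\setminus B_\epsilon}F_*^{-1}\sigma_\epsilon(k)\bigr)\tilde u=0\ \text{in } \mathbb{R}^d\setminus\overline{B_\epsilon},\quad \tilde u=0\ \text{on }S_\epsilon,
\]
with $\tilde u-u^i$ outgoing. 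I then introduce the reference scattering $u_0^s$ — the field scattered from the soft obstacle $B_\epsilon$ alone when probed by $u^i$ — and, denoting by $G_D(\cdot,\cdot;k)$ the outgoing Dirichlet Green's function for $\mathbb{R}^d\setminus\overline{B_\epsilon}$, obtain the integral identity
\[
\tilde u(x)=u^i(x)+u_0^s(x)+k^2\!\int_{B_2\setminus B_\epsilon}G_D(x,y;k)\,F_*^{-1}\sigma_\epsilon(y,k)\,\tilde u(y)\,dy=:u^i(x)+u_0^s(x)+\mathcal{V}[\tilde u](x).
\]
Small-obstacle scattering estimates of the type adapted in [NV12-1], applied under hypothesis \eqref{u^i assumption 1}, yield $\|u_0^s\|_{L^2(B_R\setminus B_2)}\lesssim\epsilon$ in three dimensions and $\|u_0^s\|_{L^2(B_R\setminus B_2)}\lesssim |H_0^{(1)}(k)|/|H_0^{(1)}(\epsilon k)|$ in two dimensions.

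The crux is then a pair of bounds on $\mathcal V$: a self-mapping bound $\|\mathcal V\|_{L^2(B_2\setminus B_\epsilon)\to L^2(B_2\setminus B_\epsilon)}\lesssim a(k)M_{\epsilon,k}$, and an exterior bound $\|\mathcal V[u]\|_{L^2(B_R\setminus B_2)}\lesssim a(k)M_{\epsilon,k}\|u\|_{L^2(B_2\setminus B_\epsilon)}$, both with constants depending only on $k_0$ and $R$. Taking these for granted, the smallness assumption $k^2 a(k)M_{\epsilon,k}<c$ lets me invert $I-k^2\mathcal V$ on $L^2(B_2\setminus B_\epsilon)$ by Neumann series, yielding $\|\tilde u\|_{L^2(B_2\setminus B_\epsilon)}\lesssim\|u^i+u_0^s\|_{L^2(B_2)}\lesssim\|u^i\|_{L^2(B_R)}+(\text{the reference-scattering remainder})$. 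Feeding this into the exterior part of the representation and recalling that $u_c^s=u_0^s+k^2\mathcal V[\tilde u]$ on $\mathbb{R}^d\setminus\overline{B_2}$ produces
\[
\|u_c^s\|_{L^2(B_R\setminus B_2)}\le \|u_0^s\|_{L^2(B_R\setminus B_2)}+C\,k^2 a(k)\,M_{\epsilon,k}\bigl(\|u^i\|_{L^2(B_R)}+\text{lower order}\bigr),
\]
which combined with the reference bounds on $u_0^s$ delivers \eqref{u_c^s 3d} and \eqref{u_c^s 2d}.

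The hardest step is establishing the $a(k)$-factor operator bounds on $\mathcal V$. Because $G_D$ is a Dirichlet Green's function for the exterior of a small ball rather than the free fundamental solution, one cannot apply standard Helmholtz volume-potential estimates directly; I would write $G_D(x,y;k)=\Phi_k(x,y)+H_k(x,y)$, where $\Phi_k$ is the outgoing free-space fundamental solution and $H_k$ is the correction enforcing the Dirichlet condition on $S_\epsilon$. The $\Phi_k$-part is then controlled by the $L^2\!\to\! L^2$ volume-potential estimates of [NV12-1], which are precisely what produce the factor $a(k)$, including the two-dimensional gain $\min\{1+|\ln k|,k^{-1/4}\}$. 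The correction $H_k$ can be handled by recognizing it as the field scattered by $B_\epsilon$ from a point source in $B_2\setminus\overline{B_\epsilon}$, so the same small-obstacle scattering theory used for $u_0^s$ gives uniform $\epsilon k<k_0$ bounds (with the logarithmic factor in two dimensions absorbed into $a(k)$). Once these operator bounds are secured, the rest is bookkeeping within the Neumann-series framework outlined above.
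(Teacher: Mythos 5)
Your overall plan coincides with the paper's: change variables via $F$ to make the problem isotropic on $\RR^d\setminus\overline{B_\epsilon}$, write a Lippmann--Schwinger equation with respect to the Dirichlet Green's function $\Phi_k^0=\Phi_k+\Psi_k$ of the exterior of the small ball, bound the reference scattered field $u^{is}$ using the small-obstacle estimates of \cite{NV12-1} (Lemma~\ref{LEM u^is}), and invert $I-T$ by a Neumann series once $\Vert T\Vert\lesssim k^2 a(k)M_{\epsilon,k}<\tfrac12$; the bookkeeping, including the $``1+"$ in the two-dimensional estimate coming from $\Vert u^{is}\Vert_{L^2(B_2\setminus B_\epsilon)}\lesssim 1$, is exactly the paper's.

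The one place your sketch has a genuine gap is the bound on the correction part of the volume operator (your $H_k$, the paper's $T_2$ built on $\Psi_k$). You propose to ``recognize $H_k$ as the field scattered by $B_\epsilon$ from a point source in $B_2\setminus\overline{B_\epsilon}$'' and then apply small-obstacle scattering theory ``uniformly'' in the source. Taken as a pointwise-in-source bound on the kernel $\Psi_k(x,y)$, this does not close: the boundary data $-\Phi_k(\cdot,y)\big|_{S_\epsilon}$ has $H^{1/2}(S_\epsilon)$ norm that blows up as $y\to S_\epsilon$, so no estimate uniform over $y\in B_2\setminus\overline{B_\epsilon}$ is available, and the $\epsilon$-scaling you need (an extra $\sqrt\epsilon$ for $d=3$, a $\min\{1+|\ln k|,k^{-1/4}\}$ gain for $d=2$) will not emerge. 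The paper's resolution (Lemma~\ref{LEM T2 bounds}) is to bound the \emph{operator action} rather than the kernel: $T_2u$ is itself the radiating solution of the exterior Helmholtz problem with Dirichlet data $-T_1u\big|_{S_\epsilon}$, so one applies Corollary~\ref{CORO small ball} with $f=-T_1u$. That requires $\epsilon$-weighted $H^{1/2}(S_\epsilon)$ control of $T_1u$, which after rescaling reduces to $\Vert T_1u\Vert_{L^2(B_\epsilon)}$ and $\Vert\nabla T_1u\Vert_{L^2(B_\epsilon)}$; these are the delicate estimates of Lemma~\ref{LEM T1 bounds} parts $(ii)$ and $(iv)$, proved by Marcinkiewicz interpolation using the gradient bounds on $\Phi_k$ from Lemma~\ref{LEM Phi bounds}$(ii)$. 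Without this reformulation and the accompanying $\epsilon$-weighted interior bounds, the ``$a(k)M_{\epsilon,k}$'' operator estimate for $\mathcal V$ — which is the crux, as you yourself note — does not follow.
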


\begin{remark}
\normalfont In the above theorem, $H_0^{(1)}$ denotes the Hankel function of the first kind of order $0$. We also adopt the following notation: for two positive quantities $A$ and $B$, we write $A \lesssim B$, if there exists a constant $d>0$ (independent of $A$ and $B$) such that $A \leq d B$.
\end{remark}

\n Imposing a suitable lower bound on the resonant frequency $k_\epsilon$ with respect to $\epsilon$, the quantity $M_{\epsilon, k}$ (for bounded $k$) becomes of order $\epsilon$ for $d=3$, and of order $1/|\ln \epsilon|$ for $d=2$ (cf. \eqref{M order eps}) and Theorem~\ref{THM main} implies the following result:

\begin{theorem}(Broadband approximate cloaking) \label{THM main coro} \mbox{}

\n Let $R>2$, $k_+>k_->0$, and set $\Gamma:= [k_-,k_+]$. Assume that for some constant $c_*>0$, \ $k_\epsilon^2 > c_* \epsilon^{-3}$ for $d=3$, and $k_\epsilon^2 > c_* |\ln \epsilon|/\epsilon$ for $d=2$. Furthermore, assume that the incident field $u^i$ satisfies

\begin{equation} \label{u^i assumption 2}
\|u^i\|_{L^2(B_R)} \leq C_R,  \qquad \qquad \ \forall \ k \in \Gamma.
\end{equation}

\n Let $u^s_c$ be the scattered field from\eqref{main u_c}. There exists a constant $c_1=c_1(k_-,k_+,R, c_*)>0$ such that, for all $\epsilon < c_1$ and $k \in \Gamma$

\begin{equation}
\|u^s_c\|_{L^2(B_R \setminus B_2)} \lesssim
\begin{cases}
\epsilon~, \hspace{1.3in} & d=3~,
\\[.01in]
1/ |\ln \epsilon|~, & d=2~,
\end{cases}
\end{equation}

\n where the implicit constant depends only on $k_-,k_+,R, c_*$ and $C_R$. 
Similarly, there exists a constant $c_2=c_2(k_-,k_+,c_*)>0$, such that for all $\epsilon < c_2$, \ $k \in \Gamma$, and $|\hat{x}|=1$ 

\begin{equation}
|u_\infty(\hat{x})| \lesssim
\begin{cases}
\epsilon~, \hspace{1.3in} & d=3~,
\\[.01in]
1/ |\ln \epsilon|~, & d=2~.
\end{cases}
\end{equation}

\n where $u_\infty$ is the far field pattern defined in \eqref{far field}, and the implicit constant depends only on $k_-,k_+, c_*$ and $C_5$.
\end{theorem}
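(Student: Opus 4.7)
The plan is to derive Theorem 3.3 directly from Theorem 3.2 by first pinning down the size of $M_{\epsilon,k}$ under the hypothesis on the resonant frequency $k_\epsilon$, and then verifying that every term on the right-hand side of \eqref{u_c^s 3d}/\eqref{u_c^s 2d} collapses to the claimed order uniformly in $k$ on the compact band $\Gamma$.

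First I would compute $M_{\epsilon,k}$. Since $\sigma_\epsilon(k)$ is spatially constant,
$$F_*^{-1}\sigma_\epsilon(k)(x) \;=\; \sigma_\epsilon(k)\,|\det DF(x)|,$$
so $M_{\epsilon,k} = |\sigma_\epsilon(k)|\cdot\|\det DF\|_{L^\infty(B_2\setminus B_\epsilon)}$. Writing $F$ in radial coordinates as $r\mapsto \tfrac{2-2\epsilon}{2-\epsilon}+\tfrac{r}{2-\epsilon}$, the Jacobian factors as (radial stretch)$\times$(tangential stretch)$^{d-1}$, the supremum is attained as $|x|\to\epsilon$, and one obtains $\|\det DF\|_{L^\infty}\lesssim \epsilon^{1-d}$. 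On the other hand, for $k\in\Gamma$ and $k_\epsilon$ large, $|\sigma_\epsilon(k)| = |k_\epsilon^2-k^2-ik|^{-1}\sim k_\epsilon^{-2}$. Hence $M_{\epsilon,k}\lesssim \epsilon^{1-d}/k_\epsilon^2$, and the hypothesis $k_\epsilon^2>c_*\epsilon^{-3}$ in $d=3$ gives $M_{\epsilon,k}\lesssim\epsilon$, while $k_\epsilon^2>c_*|\ln\epsilon|/\epsilon$ in $d=2$ gives $M_{\epsilon,k}\lesssim 1/|\ln\epsilon|$; this is what the cryptic reference \eqref{M order eps} promises.

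Second, I would verify Theorem~\ref{THM main} applies and plug in. Because $a(k)$ is bounded on $\Gamma$, the quantity $k^2 a(k)M_{\epsilon,k}$ tends to $0$ uniformly in $k\in\Gamma$, so the smallness condition $k^2a(k)M_{\epsilon,k}<c(k_0,R)$ holds for all $\epsilon$ below some $c_1(k_-,k_+,R,c_*)$. Assumption \eqref{u^i assumption 1} follows from \eqref{u^i assumption 2} and interior elliptic regularity for the constant-coefficient Helmholtz equation (on $B_\epsilon\subset B_R$, the $L^\infty$ norm and gradient of $u^i$ are controlled by its $L^2$ norm on a slightly larger ball). In $d=3$, estimate \eqref{u_c^s 3d} then yields $\|u_c^s\|_{L^2(B_R\setminus B_2)}\lesssim \epsilon + \epsilon\cdot C_R\lesssim \epsilon$. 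In $d=2$, the Drude-Lorentz contribution in \eqref{u_c^s 2d} is $\lesssim 1/|\ln\epsilon|$, and the remaining ratio $|H_0^{(1)}(k)|/|H_0^{(1)}(\epsilon k)|$ is controlled by using that $|H_0^{(1)}(k)|$ is bounded above and below on $\Gamma$ together with the small-argument asymptotics $H_0^{(1)}(z)\sim (2i/\pi)\ln z$, which give $|H_0^{(1)}(\epsilon k)|\sim |\ln(\epsilon k)|\sim |\ln\epsilon|$ uniformly in $k\in\Gamma$.

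For the far-field bound I would use the Lippmann-Schwinger volume representation: since $u_c^s$ solves the Helmholtz equation outside $B_2$ and is coupled to $u_c^t$ across $S_2$, one can express
$$u_\infty(\hat x) \;=\; c_d\!\int_{B_2\setminus B_1} e^{-ik\hat x\cdot y}\!\left[k^2(q_c-1)u_c^t + \nabla\cdot\bigl((A_c-I)\nabla u_c^t\bigr)\right]\!dy,$$
(after transporting back to $B_2\setminus B_\epsilon$ via Lemma~\ref{LEM change of var}, which is why the push-forward norm in $M_{\epsilon,k}$ is the right quantity). Integrating by parts in the divergence term and bounding $u_c^t$ in $H^1$ by the same energy estimates that drive Theorem~\ref{THM main} — with the $L^\infty$ factor on the exponential replaced by $1$ — yields $|u_\infty(\hat x)|\lesssim\epsilon$ in 3D and $\lesssim 1/|\ln\epsilon|$ in 2D, uniformly in $\hat x\in S_1$ and $k\in\Gamma$; the constant $R$ drops out because the integration is over the fixed set $B_2\setminus B_1$. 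The main obstacle is really concentrated in Theorem~\ref{THM main} itself; for the present corollary, the only genuinely new (but routine) work is the two-dimensional Hankel-function estimate and the far-field identification, whereas the three-dimensional scattered-field bound is a direct substitution.
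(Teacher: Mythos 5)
Your derivation of the $L^2$ scattered-field estimate is essentially the same as the paper's and is correct: you compute $M_{\epsilon,k}=|\sigma_\epsilon(k)|\,\|\det DF\|_{L^\infty}\lesssim\epsilon^{1-d}/k_\epsilon^2$, impose the stated lower bound on $k_\epsilon^2$ to get $M_{\epsilon,k}\lesssim\epsilon$ (resp.\ $1/|\ln\epsilon|$), verify the smallness of $k^2a(k)M_{\epsilon,k}$, reduce \eqref{u^i assumption 2} to \eqref{u^i assumption 1} via interior elliptic estimates, and control the Hankel ratio on $\Gamma$ by the small-argument asymptotics. This matches the paper's argument step for step.

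However, the far-field estimate is where you depart from the paper, and your proposed route has a real gap. First, the representation
\[
u_\infty(\hat x)=c_d\int_{B_2\setminus B_1}e^{-ik\hat x\cdot y}\Bigl[k^2(q_c-1)u_c^t+\nabla\cdot\bigl((A_c-I)\nabla u_c^t\bigr)\Bigr]\,dy
\]
is not valid for the present problem: because $B_1$ is a Dirichlet obstacle, $u_c^s$ does not satisfy the source-form inhomogeneous Helmholtz equation on all of $\RR^d$, and the correct representation must either involve an additional surface term on $S_1$ or be written in terms of the Dirichlet Green's function $\Phi_k^0=\Phi_k+\Psi_k$ (together with the contribution of $u^{is}$), not the free-space plane wave alone. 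Second, even setting the formula aside, bounding the resulting volume integral requires controlling $u_c^t$ in $H^1(B_2\setminus B_1)$ uniformly in $\epsilon$; Theorem~\ref{THM main} only provides $L^2$ control of $u-u^i$ (and even that degenerates near $S_\epsilon$ when $d=2$, where $\|u-u^i\|_{L^2(B_R\setminus B_\epsilon)}$ is merely $O(1)$, cf.\ \eqref{u-u^i upto S_eps}). Deriving a uniform $H^1$ bound is nontrivial additional work, not ``the same energy estimates that drive Theorem~\ref{THM main}.''

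The paper sidesteps all of this via Lemma~\ref{LEM far field}: it represents $u_\infty$ as a boundary integral over $S_4\subset\RR^d\setminus\overline{B_2}$ (a region where $u^s$ solves the clean, constant-coefficient Helmholtz equation), uses trace estimates, a duality bound on $\partial_\nu u^s\in H^{-1/2}(S_4)$, and a Caccioppoli cut-off argument to absorb the gradient, obtaining $|u_\infty(\hat x)|\lesssim(1+k^3)\|u^s\|_{L^2(B_5\setminus B_2)}$ (with an extra $k^{-1/2}$ in 2D). The far-field bound then follows by applying the already-proved $L^2$ estimate with $R=5$, which also explains the appearance of $C_5$ rather than $C_R$ in the constant. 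I would replace your volume-potential argument for the far field with this boundary-integral reduction; the rest of your proposal stands.
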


\begin{remark} \mbox{}
\normalfont

\begin{enumerate}
\item[(i)] The results of the above two theorems do not use the radial geometry in any essential way and carry over to the non-radial setting as well.
    
\item[(ii)] The assumption \eqref{u^i assumption 2} (or \eqref{u^i assumption 1}) is satisfied by incident plane waves as well as by their superpositions, the so-called Herglotz waves $u^i:=u_g$  given by 
$$u_g(x)=\int_{|\hat y|=1}g(\hat y)e^{ikx\cdot \hat y}\, ds_{\hat y}~, \qquad g\in L^2(S_1)~.$$ It is also satisfied by radiating point sources (outside of $B_2$) and their appropriate superpositions.
\end{enumerate}
\end{remark}

\section{Transmission eigenvalues} \label{SECT trans eval}
\setcounter{equation}{0}

In this section we study the interior transmission eigenvalue problem. We first eliminate the anisotropy $A_c$  in the formulation \eqref{main transmission} by  using  a change of variables to arrive at  a new interior transmission eigenvalue problem, which has the same eigenvalues as \eqref{main transmission}. Then we reformulate the resulting problem in terms of a fourth order PDE, following \cite{CCH12} (see also \cite{CCH16}). Using this new formulation we prove part $(i)$ of Theorem~\ref{THM transmission}. Furthermore, in Section~\ref{SECT non discret} we present numerical evidence supporting Conjecture~\ref{CONJ} in two dimension. 

\subsection{The variational formulation}

In the interior transmission eigenvalue problem \eqref{main transmission} let us change the variables in $w_c$, while leaving $v$ unchanged. Namely, let

$$w = w_c \circ F,$$

\n where $F$ is defined by \eqref{F}. Using the properties of the map $F$ (namely that $F(x)=x$ on $S_2$, $F$ maps $S_\epsilon$ onto $S_1$ and $F_*^{-1}A_c = F_*^{-1}F_*I=I$ in $B_2 \setminus \overline{B}_1$) along with Lemma~\ref{LEM change of var}, we obtain that $w, v$ solve the following transmission problem:

\begin{equation} \label{trans change of var}
\begin{cases}
\Delta w  + k^2 q w = 0~, \qquad \qquad &\text{in} \ B_2 \setminus \overline{B}_\epsilon
\\
\Delta v + k^2 v = 0~, &\text{in} \ B_2
\\
w = v~,  &\text{on} \ S_2
\\
\partial_\nu w = \partial_\nu v &\text{on} \ S_2
\\
w = 0~, &\text{on} \ S_\epsilon
\end{cases}
\end{equation}

\n where 

\begin{equation*}
q(x,k) = F^{-1}_* q_c (x,k) = F^{-1}_* \left[F_* 1 + \sigma_\epsilon(k) \right] =  1 + \sigma_\epsilon(k) | \det DF(x)|~,  \qquad \qquad x \in  B_2 \setminus \overline{B}_\epsilon~. 
\end{equation*}

\n Let us introduce the notation

\begin{equation*}
{\mathcal O}:=B_2 \setminus \overline{B}_\epsilon. 
\end{equation*}

\n It is clear that $k \in \mathbb{C}$ is a  transmission eigenvalue for \eqref{main transmission} with eigenfunction $(w_c, v)$, if and only if, it is a transmission eigenvalue for \eqref{trans change of var} with eigenfunction $(w=w_c \circ F, v)$. Thus \eqref{main transmission} and \eqref{trans change of var} have the same set of transmission eigenvalues. We recall that the weak solution of \eqref{trans change of var} is a pair of functions\footnote{We use the notation $L^2_\Delta({\mathcal O})= \{ w\in L^2({\mathcal O})~:~ \Delta w \in L^2({\mathcal O})\}$ and $H^1_\Delta({\mathcal O})= \{ w\in H^1({\mathcal O})~:~ \Delta w \in L^2({\mathcal O})\}$} $w \in L^2_\Delta({\mathcal O})$ and $v \in L^2_\Delta (B_2)$ that satisfy the PDEs of \eqref{trans change of var} in the sense of distributions, such that $w =0$ on $S_\epsilon$ and $u:=w-v \in H^1_\Delta({\mathcal O})$ satisfies the boundary conditions $u=\partial_\nu u = 0$ on $S_2$.

\begin{remark} \mbox{}
\normalfont
\begin{enumerate}
\item[(i)] We note that the trace (on $S_\epsilon$) of a function $w\in L^2_\Delta({\mathcal O})$ makes sense as an element of $H^{-\frac{1}{2}}(S_\epsilon)$ by duality, using the identity 
$$\left<w,\tau\right>_{H^{-1/2},H^{1/2}}=\int_{{\mathcal O}}(w\Delta \varphi-\varphi \Delta w)\,dx~,$$
where $\varphi\in H^2({\mathcal O})$ is such that $\varphi =0$ in a neighborhood of $S_2$, and $\varphi=0$ and $\partial \varphi/\partial \nu=\tau$ on $S_\epsilon$ .

\item[(ii)] Similarly we note that for a function $u \in H^1_\Delta ({\mathcal O})$ the normal derivative $\partial_\nu u$ (on $S_2$) makes sense as an element of $H^{-\frac12}(S_2)$ by duality, using the formula
$$
\left<\partial_\nu u,\psi \right>_{H^{-1/2},H^{1/2}}=\int_{{\mathcal O}}\left(\Delta u \varphi + \nabla u \nabla \varphi \right)\,dx~,
$$
where $\varphi\in H^1({\mathcal O})$ is such that $\varphi =0$ on $S_\epsilon$, and $\varphi=\psi$ on $S_2$.
\end{enumerate}
\end{remark}
\n
We can reformulate \eqref{trans change of var} as a fourth order problem. Indeed, given a weak solution $w,v$ of \eqref{trans change of var}, let us set

\begin{equation} \label{u trans}
u = 
\begin{cases}
w-v, \qquad &\text{in} \ {\mathcal O}
\\
-v, & \text{in} \ B_\epsilon,
\end{cases}
\end{equation}

\n It is clear that

\begin{equation} \label{u v PDE}
\Delta u + k^2 q u = k^2 (1-q) v, \qquad \qquad \text{in} \ {\mathcal O}.
\end{equation}

\n Dividing both sides of the above equation by $1-q$ (note that $1-q=-\sigma_\epsilon(k)| \det DF |\neq 0$ in ${\mathcal O}$) and applying the operator $\Delta + k^2$ we can eliminate $v$ and obtain a fourth order equation for $u$. The boundary condition on $w$ implies that $u$ is continuous across $S_\epsilon$. Next, since $v$ solves the Helmholtz equation in $B_2$, $v$ and its normal derivative $\partial_\nu v$ are continuous across $S_\epsilon$. We can rewrite these continuity conditions in terms of $u$ using \eqref{u trans} and \eqref{u v PDE}. Thus, we obtain that $u$ (weakly) solves the problem  

\begin{equation} \label{u PDE trans}
\begin{cases}
\displaystyle (\Delta + k^2) \frac{1}{1-q} (\Delta + k^2 q) u = 0, \qquad \qquad &\text{in} \ {\mathcal O}
\\[.09in]
\Delta u + k^2 u = 0, &\text{in} \ B_\epsilon
\\
u = \partial_\nu u = 0,  &\text{on} \ S_2
\\
u^+ = u^-, &\text{on} \ S_\epsilon
\\
\displaystyle \left[ \frac{1}{1-q} (\Delta + k^2 q) u \right]^+ = -k^2u^-, &\text{on} \ S_\epsilon
\\[.15in]
\displaystyle \partial_\nu^+ \left[\frac{1}{1-q} (\Delta + k^2 q) u \right] = -k^2\partial_\nu^- u, &\text{on} \ S_\epsilon.
\end{cases}
\end{equation}

\n Note that as $v \in L^2(B_2)$ solves the Helmholtz equation, by local elliptic regularity $v \in H^1(B_\epsilon)$. But as $u$ is continuous across $S_\epsilon$, we conclude that $u \in H^1(B_2) \cap H^1_\Delta({\mathcal O})$. Incorporating the boundary conditions on $S_2$ we introduce the Hilbert space of functions

\begin{equation}\label{X}
X = \left\{ u \in H^1(B_2): \Delta u \in L^2({\mathcal O}) \ \text{and} \ u = \partial_\nu u = 0 \ \text{on} \ S_2 \right\},
\end{equation}

\n where $\partial_\nu u \in H^{-\frac{1}{2}}(S_2)$, and is defined as described in the earlier remark. Thus, given a non-trivial weak solution $w,v$ of \eqref{trans change of var}, the function $u \in X$, given by \eqref{u trans}, is a non-trivial weak solution of \eqref{u PDE trans}. Conversely, if $u \in X$ is a non-trivial weak solution of \eqref{u PDE trans}, then 

\begin{equation} \label{w and v from u}
v = 
\begin{cases}
\displaystyle \frac{1}{1-q} \left( \Delta + k^2 q \right) u, \quad &\text{in} \ {\mathcal O}
\\
-k^2u, &\text{in} \ B_\epsilon
\end{cases}
\qquad \text{and} \qquad
w = k^2 u+v,  \quad \text{in} \ {\mathcal O},
\end{equation}

\n satisfy $w \in L^2({\mathcal O}), \ v \in L^2(B_2)$ and $w-v \in H^1_\Delta({\mathcal O})$ and yield a non-trivial weak solution of \eqref{trans change of var}. Integration by parts easily yields a variational formulation of \eqref{u PDE trans}, namely : find $u \in  X$ such that

\begin{eqnarray} \label{variational form}
&&\int_{{\mathcal O}} \frac{1}{1-q} \left( \Delta u + k^2u \right) \left( \Delta \overline{\varphi} + k^2 \overline{\varphi} \right) dx \\&&\hskip 60pt - k^4 \int_{B_2} u \overline{\varphi} dx + k^2  \int_{B_2} \nabla u \cdot \nabla \overline{\varphi} dx= 0,
\qquad \qquad \forall \varphi \in X. \nonumber
\end{eqnarray}
\n
Before excluding the existence of real and purely imaginary transmission eigenvalues we need the following formulas for the map $F$:

\begin{lemma} \label{LEM F}
Let $F$ be given by \eqref{F}, and set $\hat{x} = x/|x|$, then

\begin{equation*}
D F(x) = 
\begin{cases}
I, \hspace{2.5in} & \text{in} \ \RR^d \setminus B_2
\\[.05in]
\dfrac{1}{2-\epsilon} \left\{ I + \frac{2-2\epsilon}{|x|} \left( I - \hat{x} \otimes \hat{x} \right) \right\},   & \text{in} \ B_2 \setminus B_\epsilon
\\[.1in]
I/\epsilon,  & \text{in} \ B_\epsilon,
\end{cases}
\end{equation*}

\n where $I$ is the $d\times d$ identity matrix and for any two vectors $a, b \in \RR^d$, $a \otimes b$ denotes the matrix whose $(i,j)$-th element is $a_i b_j$. In particular,

\begin{equation} \label{det DF}
\det D F(x) = 
\begin{cases}
1 \hspace{1.5in} & \text{in} \ \RR^d \setminus B_2
\\[.05in]
\displaystyle \frac{(2-2\epsilon + |x|)^{d-1}}{(2-\epsilon)^d |x|^{d-1}}   & \text{in} \ B_2 \setminus B_\epsilon
\\[.1in]
1/ \epsilon^d  & \text{in} \ B_\epsilon.
\end{cases}
\end{equation}
\end{lemma}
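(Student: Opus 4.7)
The proof is a direct calculation in each of the three radial regions, so I would organize it accordingly. On $\RR^d\setminus B_2$ the map equals the identity, so both $DF=I$ and $\det DF=1$ follow immediately. The formulas on $B_\epsilon$ should be read as implicitly extending $F$ by the linear rescaling $F(x)=x/\epsilon$ on $B_\epsilon$; this is forced by continuity across $S_\epsilon$, since the outer formula \eqref{F} yields $F(x)=\hat x = x/\epsilon$ when $|x|=\epsilon$, and this is the unique linear extension that maps $B_\epsilon$ bijectively onto $B_1$. With this extension, $DF=I/\epsilon$ and $\det DF = \epsilon^{-d}$ are again immediate.

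The substantive computation is on $B_2\setminus B_\epsilon$. First I would rewrite \eqref{F} in the additive form
\begin{equation*}
F(x)\;=\;\frac{2-2\epsilon}{2-\epsilon}\,\hat x \;+\; \frac{x}{2-\epsilon},
\end{equation*}
so the dependence on $x$ is explicit. Differentiating term by term, using the standard identity $D\hat x = D(x/|x|) = (I-\hat x\otimes\hat x)/|x|$ and $Dx=I$, yields at once
\begin{equation*}
DF(x) \;=\; \frac{1}{2-\epsilon}\left\{ I + \frac{2-2\epsilon}{|x|}\,(I-\hat x\otimes\hat x)\right\},
\end{equation*}
which is the claimed formula.

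For the determinant I would avoid expanding and instead diagonalize $DF$ by exploiting the projector structure: $I-\hat x\otimes\hat x$ is the orthogonal projector onto the hyperplane $\hat x^{\perp}$. Hence $DF(x)$ has $\hat x$ as an eigenvector with eigenvalue $1/(2-\epsilon)$, and every vector orthogonal to $\hat x$ as an eigenvector with eigenvalue $\frac{1}{2-\epsilon}\bigl(1+\frac{2-2\epsilon}{|x|}\bigr)=\frac{|x|+2-2\epsilon}{(2-\epsilon)|x|}$, of multiplicity $d-1$. Multiplying these eigenvalues gives
\begin{equation*}
\det DF(x) \;=\; \frac{1}{2-\epsilon}\cdot\left(\frac{|x|+2-2\epsilon}{(2-\epsilon)|x|}\right)^{d-1} \;=\; \frac{(2-2\epsilon+|x|)^{d-1}}{(2-\epsilon)^d\,|x|^{d-1}},
\end{equation*}
matching \eqref{det DF}.

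There is no real obstacle: the statement is a purely computational lemma and the only conceptual points are (a) the tacit extension of $F$ to $B_\epsilon$, noted above, and (b) using the spectral decomposition of $I-\hat x\otimes\hat x$ rather than brute-force cofactor expansion to keep the determinant calculation clean and dimension-independent.
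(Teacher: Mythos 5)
Your proof is correct, and the computation of $DF$ is essentially the same as the paper's: both split $F$ into its radial and linear parts (on $B_2\setminus B_\epsilon$) and apply $D\hat x = (I-\hat x\otimes\hat x)/|x|$. Where you genuinely diverge is the determinant. The paper invokes the rank-one determinant identity $\det(I + a\otimes b) = 1 + a\cdot b$, rewriting $DF$ as a scalar multiple of $I$ perturbed by a rank-one term, and reads off the answer algebraically. You instead diagonalize $DF$ using the fact that $I - \hat x\otimes\hat x$ is the orthogonal projector onto $\hat x^\perp$, obtaining one radial eigenvalue and one tangential eigenvalue of multiplicity $d-1$, and multiply them. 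The two routes are equally dimension-independent and of comparable length; yours makes the radial/tangential geometry of the push-forward visible, which is pleasant, while the paper's is a touch more automatic for a reader who has the rank-one formula at hand. One small thing you add that the paper treats as implicit: the observation that \eqref{F} does not actually define $F$ on $B_\epsilon$, and that the extension $F(x)=x/\epsilon$ is forced by continuity on $S_\epsilon$ (since the middle formula evaluates to $\hat x = x/\epsilon$ there). That is a worthwhile remark to make explicit.
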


\begin{proof}
The formulas for $DF(x)$ inside $B_\epsilon$ and outside of $B_2$ are trivial. In the region $B_2\setminus B_\epsilon$ it is a direct consequence of the identity

\begin{equation*}
D \hat{x} = \frac{1}{|x|} \left( I - \hat{x} \otimes \hat{x} \right)
\end{equation*}

\n Finally, using the identity $\det (I + a \otimes b) = 1 + a \cdot b$ for any two vectors $a, b \in \RR^d$, we find that for $x \in B_2 \setminus B_\epsilon$ 

\begin{equation*}
\det DF(x) = \frac{1}{(2-\epsilon)^d} \left[ \frac{2-2\epsilon}{|x|}+1 \right]^{d-1},
\end{equation*}

\n which concludes the proof.
\end{proof}

\begin{lemma} \label{LEM not trans eval}
There are no non-trivial solutions to \eqref{main transmission} for  $k\in \RR \cup i\RR$, {\it i.e.}, there are no transmission eigenvalues for \eqref{main transmission} in $\RR \cup i\RR$.
\end{lemma}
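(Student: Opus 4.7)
The plan is to test the variational formulation \eqref{variational form} with $\varphi = u$ and exploit the sign (or imaginary-part) structure of the coefficient $1/(1-q)$ to force $\Delta u + k^2 u = 0$ in $\mathcal{O}$, then invoke unique continuation to conclude $u \equiv 0$ on $B_2$. By the equivalence between \eqref{main transmission}, \eqref{trans change of var} and \eqref{u PDE trans}, together with the recovery formulas \eqref{w and v from u}, this will show that any transmission eigenpair $(w_c, v)$ with $k \in \RR \cup i\RR$ must vanish.

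To set up, I would first rewrite the coefficient explicitly: since $1 - q = -\sigma_\epsilon(k)|\det DF|$ and $1/\sigma_\epsilon(k) = k_\epsilon^2 - k^2 - ik$, one has
\[ \frac{1}{1-q(x,k)} = -\frac{k_\epsilon^2 - k^2 - ik}{|\det DF(x)|}~. \]
Substituting $\varphi = u$ into \eqref{variational form} then yields the key identity
\[ \int_{\mathcal{O}} \frac{1}{1-q}\,|\Delta u + k^2 u|^2\, dx - k^4 \int_{B_2}|u|^2\, dx + k^2 \int_{B_2}|\nabla u|^2\, dx = 0~, \]
which drives both parts of the argument.

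For purely imaginary $k = is$, every term in this identity is real: $k^2 = -s^2$, $k^4 = s^4$, and $\sigma_\epsilon(is) = (k_\epsilon^2 + s^2 + s)^{-1}$ is real and strictly positive, because $(s+1/2)^2 + k_\epsilon^2 - 1/4 > 0$ for all $s \in \RR$ under the standing assumption $k_\epsilon > 1/2$. Hence $1/(1-q) < 0$ on $\mathcal{O}$, and the identity expresses zero as a sum of three non-positive terms, each of which must vanish. For $s \neq 0$ the middle term alone forces $u \equiv 0$ on $B_2$; for $s = 0$ the first term gives $\Delta u = 0$ in $\mathcal{O}$, whence Holmgren uniqueness applied with the Cauchy data $u = \partial_\nu u = 0$ on $S_2$ yields $u \equiv 0$ in $\mathcal{O}$, and then $u = 0$ on $S_\epsilon$ combined with $\Delta u = 0$ in $B_\epsilon$ forces $u \equiv 0$ on all of $B_2$ by the maximum principle. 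For real $k \neq 0$, the last two terms of the identity are already real, so taking its imaginary part gives
\[ \int_{\mathcal{O}} \frac{k}{|\det DF|}\,|\Delta u + k^2 u|^2\, dx = 0~, \]
which again forces $\Delta u + k^2 u = 0$ throughout $\mathcal{O}$. Holmgren uniqueness from the Cauchy data on $S_2$ gives $u \equiv 0$ in $\mathcal{O}$; to carry the vanishing into $B_\epsilon$, I would read off the transmission conditions in \eqref{u PDE trans}: once $u \equiv 0$ in $\mathcal{O}$, the outer-trace quantities appearing there vanish automatically, so the jump relations produce $u^- = 0$ and $\partial_\nu^- u = 0$ on $S_\epsilon$, and a second application of Holmgren inside $B_\epsilon$ delivers $u \equiv 0$ there as well.

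The main technical subtlety I expect to handle is the rigorous interpretation of the boundary traces on $S_\epsilon$ entering the transmission conditions, which should be understood in the $H^{-1/2}$ duality sense described in the remark following \eqref{X}; beyond this, the argument is a clean sign/imaginary-part analysis packaged with standard Holmgren-type unique continuation for the Helmholtz operator.
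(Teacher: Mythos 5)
Your proposal is correct and follows essentially the same route as the paper: test the variational identity \eqref{variational form} with $\varphi = u$, use the sign of $\sigma_\epsilon$ on $i\RR$ (positivity forced by $k_\epsilon > 1/2$) to make all three terms definite for purely imaginary $k$, take the imaginary part for real $k$ to force $\Delta u + k^2 u = 0$ in $\mathcal{O}$, and then propagate the vanishing from the Cauchy data on $S_2$ through $\mathcal{O}$ and across $S_\epsilon$ via the transmission conditions of \eqref{u PDE trans} into $B_\epsilon$. The paper writes the imaginary-$k$ identity with the coefficient $1/(q-1)>0$ instead of your $1/(1-q)<0$, and phrases the unique-continuation step as ``Cauchy boundary conditions imply'' rather than naming Holmgren, but these are purely cosmetic differences.
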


\begin{proof}
First suppose $k=i\tau$ with $\tau \in \RR$ is a transmission eigenvalue. The above discussion shows that the problem \eqref{u PDE trans} has a non-trivial solution $u \in X$ for this value of $k$. Using the variational formulation \eqref{variational form} with $\varphi = u$ we get

\begin{equation} \label{var form u i tau}
0 = \int_{{\mathcal O}} \frac{1}{q-1} \left| \Delta u - \tau^2 u \right|^2 + \tau^4 \int_{B_2}  |u|^2 dx +  \tau^2 \int_{B_2} |\nabla u|^2 dx~.
\end{equation}

\n Note that

\begin{equation*}
q(x,i\tau) - 1 = \sigma_\epsilon(i \tau)| \det DF(x)| = \frac{1}{k_\epsilon^2+\tau^2 + \tau} \frac{(2-2\epsilon + |x|)^{d-1}}{(2-\epsilon)^d |x|^{d-1}}~, \qquad \qquad x \in {\mathcal O}~. 
\end{equation*}

\n If $\tau\ge 0$ the above quantity is obviously positive. For $\tau<0$, it is still positive due to the assumption $2k_\epsilon>1$. Thus $q(x,i\tau) - 1 > 0$ for all $\tau \in \RR$ and $x \in {\mathcal O}$. For $\tau \ne 0$ we now conclude from \eqref{var form u i tau} that $u=0$ in $B_2$, contradicting the non-triviality of $u$ for $\tau \ne 0$. For $\tau =0$ we conclude from \eqref{var form u i tau} that $\Delta u=0$ in $\mathcal O$. The Cauchy boundary conditions on $S_2$ now imply that $u=0$ in $\mathcal O$, and the continuity of $u$ across $S_\epsilon$ in combination with the fact that $\Delta u=0$ in $B_\epsilon$ yields that $u=0$ in all of $B_2$, contradicting the non-triviality of $u$ also for $\tau =0$.

\n
Assume now that $k \in \RR \setminus \{0\}$ is a transmission eigenvalue; again let $\varphi = u$ in the variational formulation \eqref{variational form} and take the imaginary part of the resulting equation to conclude that

\begin{equation*}
0 = \int_{{\mathcal O}} \im \left(\frac{1}{q-1} \right) \left| \Delta u + k^2 u \right|^2 dx = \frac{k}{|k_\epsilon^2-k^2-ik|^2} \int_{{\mathcal O}} \frac{(2-2\epsilon + |x|)^{d-1}}{(2-\epsilon)^d |x|^{d-1}} \left| \Delta u + k^2 u \right|^2 dx~.
\end{equation*}

\n Therefore $\Delta u + k^2 u = 0$ in ${\mathcal O}$. Using the boundary conditions $u = \partial_\nu u = 0$ on $S_2$, we conclude that $u = 0$ in ${\mathcal O}$. Since $k\ne 0$ also conclude from the boundary conditions of \eqref{u PDE trans} that $u^-=\partial_\nu^- u =0$ on $S_\epsilon$. The fact that $\Delta u + k^2 u = 0$ in $B_\epsilon$ now implies that $u=0$ in $B_\epsilon$, and thus $u=0$ in all of $B^2$. This contradicts the non-triviality of $u$.
\end{proof}

\subsection{Numerical evidence of finite accumulation points  of transmission eigenvalues} \label{SECT non discret}

\n In this section we assume that $d=2$ and consider the transmission eigenvalue problem after change of variables, i.e., the problem \eqref{trans change of var}. In polar coordinates $(r, \theta)$ we can expand the functions $v$ and $w$ as follows:

\begin{equation} \label{v in polar}
v(r,\theta) = \sum_{n \in \ZZ} \gamma_n J_n \left( k r \right) e^{in\theta},
\qquad \qquad
w(r, \theta) = \sum_{n \in \ZZ} \left[ \alpha_n {\mathcal A}_n(r) + \beta_n {\mathcal B}_n(r) \right] e^{in\theta}
\end{equation}

\n where $\alpha_n, \beta_n, \gamma_n$ are complex constants, $J_n$ is the Bessel function of order $n$ and ${\mathcal A}_n, {\mathcal B}_n$ (which also depend on $k$ and $\epsilon$) are linearly independent solutions of 

\begin{equation*}
r^2 R'' + r R' + \left[ k^2 r^2 + k^2 \sigma_\epsilon(k) \frac{r(r+2-2\epsilon)}{(2-\epsilon)^2} - n^2 \right] R = 0~.
\end{equation*}

\n The boundary conditions of \eqref{trans change of var} can be rewritten as

\begin{equation*}
\begin{cases}
\alpha_n {\mathcal A}_n(2) + \beta_n {\mathcal B}_n(2) &= \gamma_n J_n(2k)
\\[.05in]
\alpha_n A_n'(2) + \beta_n {\mathcal B}_n'(2) &= \gamma_n k J_n'(2k)
\\[.05in]
\alpha_n {\mathcal A}_n(\epsilon) + \beta_n {\mathcal B}_n(\epsilon) &= 0~.
\end{cases}
\end{equation*}

\n To obtain a nontrivial solution $(v,w)$ (i.e., to ensure that $k$ is an interior transmission eigenvalue) we need that there exists some $n \in \ZZ$ such that

\begin{equation*}
f(n,k):=\det \CM = 0~,
\end{equation*}

\n where

\begin{equation*}
\CM = 
\begin{pmatrix}
{\mathcal A}_n(2) & {\mathcal B}_n(2) & -J_n(2k)
\\
A_n'(2) & {\mathcal B}_n'(2) & - kJ_n'(2k)
\\
{\mathcal A}_n(\epsilon) & {\mathcal B}_n(\epsilon) & 0
\end{pmatrix}~.
\end{equation*}

\n The functions $A_n, {\mathcal B}_n$ can be expressed in terms of the Whittaker functions as follows:

\begin{equation} \label{A_n Whittaker M}
{\mathcal A}_n(r) = \frac{1}{\sqrt{r}} M_{\lambda_\epsilon(k), |n|}\left( \frac{2ik \sqrt{\sigma_\epsilon(k) + (2-\epsilon)^2}}{\epsilon-2} r  \right),
\end{equation}

\n where

\begin{equation} \label{lambda(k) Whittaker}
\lambda_\epsilon(k) = \frac{i k \sigma_\epsilon(k) (1-\epsilon)}{(2-\epsilon)\sqrt{ \sigma_\epsilon(k) + (2-\epsilon)^2}}~,
\end{equation}

\n and ${\mathcal B}_n$ is given by the same formula except with $W_{\lambda_\epsilon(k),|n|}$ in place of $M_{\lambda_\epsilon(k), |n|}$. The Whittaker functions $M_{\lambda, n}(x)$ and $W_{\lambda, n}(x)$ (for any non-negative integer $n$) are linearly independent solutions of the equation \cite{handbook}

\begin{equation*}
y'' + \left( -\frac{1}{4} + \frac{\lambda}{x} + \frac{\frac{1}{4}-n^2}{x^2} \right) y = 0~.
\end{equation*}

\n Let us take $k_\epsilon = \frac{1}{\epsilon}$ and $\epsilon = \frac{1}{2}$, then

$$\kappa = \sqrt{k_\epsilon^2 - \tfrac{1}{4}} -\frac{i}{2} \approx 1.936 - i0.5$$

\n We show some numerical evidence that $\kappa$ is a limit point of transmission eigenvalues. We conjecture that for each $n=1,2,...$ there exists $k_n \in \mathbb{C} \setminus \{\kappa\}$ such that $f(n,k_n) = 0$ and $k_n \to \kappa$ as $n \to \infty$. In other words, $\kappa$ is a limit point of the transmission eigenvalues $\{k_n\}$.

\n
For each of the values $n=1$, $n=7$, and $n=12$, we present two plots of the functions $\re f(n,x+i \tau)$ and $\im f(n,x+i \tau)$ as functions of $x$, corresponding to two different values of $\tau$. The two values of $\tau$ are chosen to be close to $\im \kappa = -0.5$, and such that they exhibit two different configurations: one for which the intersection point of $\re f$ and $\im f$ is below the horizontal axis, and one for which it is above the horizontal axis. This shows that for some intermediate value of $\tau$ both $\re f$ and $\im f$ vanish. It is reasonable to expect that this common vanishing occurs at a point $x$ near the $x$ values of the two intersection points. One notes that as $n$ increases the $x$ values of the two intersection points get closer to $1.936-i0.5$ Computations for larger values of $n$ were consistent with this.

\begin{figure}[H]
\centering
\captionsetup{width=.84\linewidth}
\includegraphics[scale=0.4]{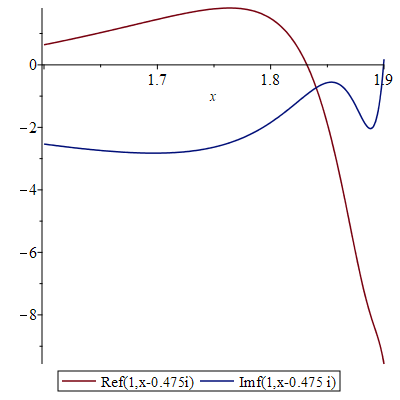}
\qquad
\includegraphics[scale=0.4]{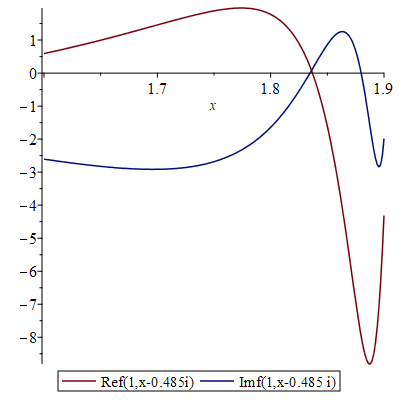}
\caption{Plots of real and imaginary parts of $f(n,x+i\tau)$ for $n=1$ and two different values of $\tau$ indicating where their intersection point crosses the horizontal axis.}
\end{figure}

\begin{figure}[H]
\centering
\captionsetup{width=.84\linewidth}
\includegraphics[scale=0.4]{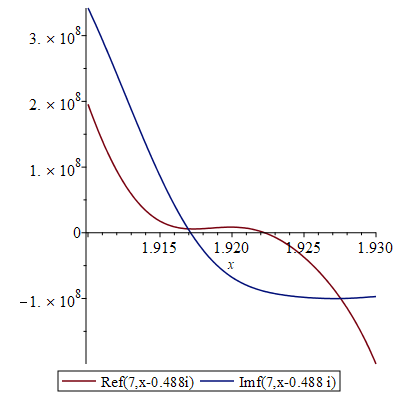}
\qquad
\includegraphics[scale=0.4]{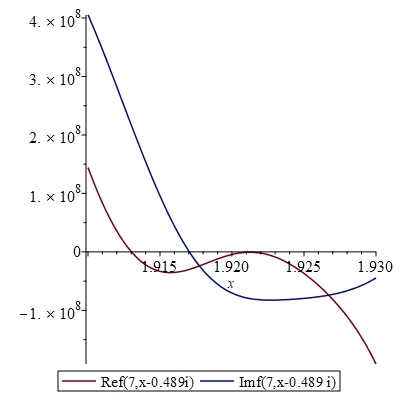}
\caption{Plots of real and imaginary parts of $f(n,x+i\tau)$ for $n=7$ and two different values of $\tau$ indicating where their intersection point crosses the horizontal axis.}
\end{figure}

\begin{figure}[H]
\centering
\captionsetup{width=.84\linewidth}
\includegraphics[scale=0.4]{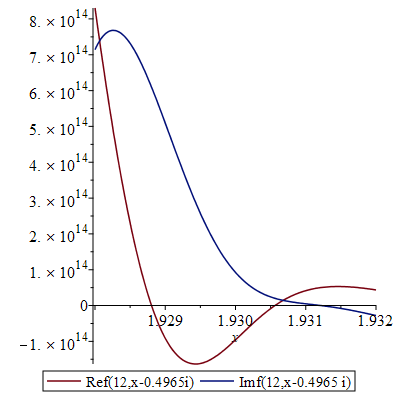}
\qquad
\includegraphics[scale=0.4]{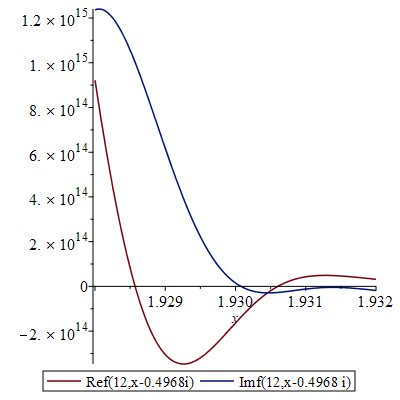}
\caption{Plots of real and imaginary parts of $f(n,x+i\tau)$ for $n=12$ and two different values of $\tau$ indicating where their intersection point crosses the horizontal axis.}
\end{figure}

\section{The scattering estimates}
\setcounter{equation}{0}
In this section we prove Theorems~\ref{THM main} and \ref{THM main coro}. The first observation is that the anisotropy in \eqref{main u_c} can be eliminated, if we change the variables in the transmitted filed $u_c^t$, but leave the incident and scattered fields unchanged. Namely, let

\begin{equation} \label{u^s and u^t}
u^s=u^s_c, \qquad u^t = u_c^t \circ F,
\end{equation}

\n where $F$ is given by \eqref{F}, then $u^t$ is defined in $B_2 \setminus  \overline{B}_\epsilon$.

Invoking Lemma~\ref{LEM change of var} and using the facts that $F = Id$ on $S_2$, $F$ maps $S_\epsilon$ onto $S_1$ and that $F^{-1}_*A_c = I$, we see that \eqref{main u_c} can be equivalently rewritten as

\begin{equation} \label{main u^s u^t}
\begin{cases}
\Delta u^s + k^2 u^s = 0~, \hspace{1in} &\text{in} \quad \RR^d \setminus \overline{B}_2
\\[.05in]
u^s \ \text{satisfies the outgoing radiation condition}
\\[.05in]
\Delta u^t + k^2 q u^t = 0~, &\text{in} \quad B_2 \setminus \overline{B}_\epsilon
\\[.05in]
\Delta u^i + k^2 u^i = 0~, &\text{in} \quad \RR^d
\\[.05in]
u^t = u^i + u^s~,  &\text{on} \quad S_2
\\[.05in]
\partial_{\nu} u^t = \partial_{\nu} u^i + \partial_{\nu} u^s~,  &\text{on} \quad S_2
\\[.05in]
u^t = 0, &\text{on} \quad S_\epsilon~,
\end{cases}
\end{equation}

\n where 

\begin{equation} \label{q}
q(x,k) = F^{-1}_* q_c (x,k) = 
\begin{cases}
1~, \hspace{1.5in} & \text{in} \quad  \RR^d \setminus B_2 
\\[.1in]
\displaystyle 1 + \sigma_\epsilon(k) | \det DF(x)|, & \text{in} \quad  B_2 \setminus B_\epsilon~. 
\end{cases}
\end{equation}

\n Introducing

\begin{equation} \label{u def sc tr inc}
u =
\begin{cases}
u^t~, \qquad \qquad &\text{in} \quad B_2\setminus \overline{B}_\epsilon
\\
u^i + u^s, & \text{in} \quad \RR^d \setminus B_2~,
\end{cases}
\end{equation}

\n the problem \eqref{main u^s u^t} can be rewritten as find $u\in H^1_{loc}(\RR^d \setminus \overline{B}_\epsilon)$

\begin{equation} \label{main u}
\begin{cases}
\Delta u + k^2 q u = 0~, \hspace{1in} &\text{in} \quad \RR^d \setminus \overline{B}_\epsilon
\\[.05in]
\Delta u^i + k^2 u^i = 0~, &\text{in} \quad \RR^d
\\[.05in]
u = 0~, &\text{on} \quad S_\epsilon
\\[.05in]
u-u^i \ \ \text{satisfies the outgoing radiation condition}.
\end{cases}
\end{equation}

\n Here we used that the boundary conditions on $S_2$ from \eqref{main u^s u^t} simply become $\jump{u} = \jump{\partial_\nu u} =0$ on $S_2$, i.e., $u$ and its normal derivative are continuous across $S_2$.

\subsection{The Lippmann-Schwinger equation}

Consider the fundamental solution of the Helmholtz equation in free space: for any $x \neq y$

\begin{equation} \label{fund sol}
\Phi_k(x,y) = 
\begin{cases}
\dfrac{e^{ik|x-y|}}{4\pi |x-y|}~, \qquad \quad &d=3~,
\\[0.15in]
\dfrac{i}{4} H_0^{(1)}(k|x-y|)~, & d=2~.
\end{cases}
\end{equation}

\n We incorporate the homogeneous Dirichlet boundary condition of \eqref{main u} into the fundamental solution, i.e., we let $\Phi_k^0$ be the Green's function for the Helmholtz equation in the region $\RR^d \setminus \overline{B}_\epsilon$ with the Dirichlet boundary condition in $S_\epsilon$. For any fixed $y \in \RR^d \setminus \overline{B}_\epsilon$ $\Phi_k^0(x,y)$ satisfies

\begin{equation} \label{Greens fctn}
\begin{cases}
\Delta_x \Phi^0_k(x,y) + k^2 \Phi^0_k(x,y) = -\delta_y~, \hspace{1in} & \quad x\in \RR^d \setminus \overline{B}_\epsilon
\\[.05in]
\Phi^0_k(x,y) = 0~, & \quad  x\in S_\epsilon
\\[.05in]
\Phi^0_k(\cdot ,y) \ \ \text{satisfies the outgoing  radiation condition}~.
\end{cases}
\end{equation}

\n Clearly we can write

\begin{equation*}
\Phi^0_k(x,y) = \Phi_k(x,y) +  \Psi_k(x,y)~,
\end{equation*}

\n where the function $\Psi_k(\cdot,y)$ is the unique solution to the following exterior Dirichlet boundary value problem for the Helmholtz equation

\begin{equation} \label{Psi}
\begin{cases}
\Delta_x \Psi_k(x, y) + k^2 \Psi_k(x, y) = 0~, \qquad \qquad & x \in \RR^d \setminus \overline{B}_\epsilon
\\
\Psi_k(x, y) = -\Phi_k(x, y)~,  & x \in \ S_\epsilon
\\[.05in]
\Psi_k(\cdot, y) \ \ \text{satisfies the outgoing radiation condition}~.
\end{cases}
\end{equation}

\n Note that the boundary data $-\Phi_k(x, y)$ is smooth, hence the function $\Psi_k(x, y)$ is smooth for $x \in \RR^d \setminus B_\epsilon$ and for any fixed $y$ as above. Next, let us introduce the volume integral operator

\begin{equation} \label{T}
Tu(x) = k^2 \int_{B_2 \setminus \overline{B}_\epsilon} (q(y,k)-1) u(y) \Phi_k^0(x,y) dy.
\end{equation}
Then the solution $u$ of \eqref{main u} satisfies the integral equation 
\begin{equation} \label{LS}
u - T u = u^i + u^{is},
\end{equation}

\n where $u^{is}$ is the scattered field from the ball $B_\epsilon$ due to the incident field $u^i$, i.e., it is the unique solution of

\begin{equation} \label{u^is}
\begin{cases}
\Delta u^{is} + k^2 u^{is} = 0, \qquad \qquad &\text{in} \ \RR^d \setminus \overline{B}_\epsilon
\\
u^{is} = -u^i,  & \text{on} \ S_\epsilon
\\[.05in]
u^{is} \ \ \text{satisfies the outgoing radiation condition}.
\end{cases}
\end{equation}

\n The equation \eqref{LS} is known as the Lippmann-Schwinger equation for the scattering problem \eqref{main u} written in terms of the Green's function $\Phi_k^0$. It can be derived the same way as done for example in \cite{CK19}(without Dirichlet boundary conditions and using the kernel $\Phi_k$). In Lemma~\ref{LEM T bounds} (see also (\ref{3dbound}) and (\ref{2dbound}) ) we prove that for any fixed interval of wave numbers $[k_-,k_+]$, $0<k_-<k_+<\infty$, and any fixed $R>2$ there exists an $\epsilon_0>0$ (depending on $k_+$ and $R$) such that  

\begin{equation} \label{T<1/2}
\|T\|_{L^2(B_R \setminus B_\epsilon) \to L^2(B_R \setminus B_\epsilon)} \leq \frac{1}{2},
\end{equation}
for any $k\in [k_-,k_+]$, and $\epsilon<\epsilon_0$. Therefore  the operator $I-T$ is invertible on $L^2(B_R \setminus B_\epsilon)$ and the integral equation \eqref{LS} has a unique solution $u_R \in L^2(B_R \setminus B_\epsilon)$. Furthermore $u_R= u|_{B_R\setminus B_\epsilon}$ where $u$ is the solution of \eqref{main u}. This follows from the fact that $u|_{B_R\setminus B_\epsilon}$ is in $L^2(B_R\setminus B_\epsilon)$ and as already noted satisfies the integral equation \eqref{LS}. It now follows immediately from \eqref{LS}, and the fact that the domain of integration for the operator $T$ is $B_2\setminus B_\epsilon$, that the solution to \eqref{main u} is given by
$$
u= T u_R + u^i + u^{is}
$$
in all of ${\mathbb R}^d\setminus B_\epsilon$. Note that due to the mapping properties of the volume potential $T u_R$ is in $H^1_{loc}({\mathbb R}^d\setminus B_\epsilon)$. The above argument shows that solving \eqref{main u} is equivalent to solving the Lippmann-Schwinger equation (\ref{LS}) on $B_R \setminus B_\epsilon$ (for any bounded set of wave numbers $[k_-,k_+]$ and $\epsilon$ sufficiently small).

\subsection{Proof of Theorems~\ref{THM main} and \ref{THM main coro}}

The main ingredients of the proofs of Theorems~\ref{THM main} and \ref{THM main coro} are $\epsilon$-explicit estimates for the scattered field $u^{is}$ and the operator $T$ in appropriate Sobolev spaces. We state these estimates in the two lemmata below, however, for clarity of exposition their proofs are postponed to  subsequent sections (see Section~\ref{SECT u^is} and Section~\ref{SECT T}, respectively).

\begin{lemma} \label{LEM T bounds}
Let $T$ be defined by \eqref{T}, and let $M_{\epsilon,k}$ and $a(k)$  be defined by \eqref{M} and \eqref{a(k)}, respectively. Suppose $R>1$, $k_0>0$, and $0< \epsilon k < k_0$. Then for any $u \in L^2(B_2 \setminus B_\epsilon)$

\begin{equation*}
\|T u \|_{L^2(B_R \setminus B_\epsilon)} \lesssim k^2 a(k) M_{\epsilon, k} \|u\|_{L^2(B_2 \setminus B_\epsilon)}~,
\end{equation*}

\n where the implicit constant depends only on $R$ and $k_0$.
\end{lemma}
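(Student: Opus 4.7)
The plan is to factor out the $L^\infty$ bound on $q(\cdot,k)-1$ and then reduce the problem to an $L^2$--$L^2$ norm bound for the integral operator with kernel $|\Phi_k^0(x,y)|$. Indeed, by \eqref{q} we have $q(y,k)-1 = \sigma_\epsilon(k)|\det DF(y)|$, which is precisely the pointwise expression of $F_*^{-1}\sigma_\epsilon(k)$, and whose $L^\infty(B_2\setminus B_\epsilon)$ norm is $M_{\epsilon,k}$. Pulling this constant out yields
\[
|Tu(x)| \leq k^2 M_{\epsilon,k}\int_{B_2\setminus B_\epsilon} |\Phi_k^0(x,y)|\,|u(y)|\,dy~,
\]
so the task reduces to showing that the integral operator with kernel $|\Phi_k^0(x,y)|$ has operator norm $\lesssim a(k)$ from $L^2(B_2\setminus B_\epsilon)$ to $L^2(B_R\setminus B_\epsilon)$, with a constant depending only on $R$ and $k_0$.

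I would then split $\Phi_k^0 = \Phi_k + \Psi_k$ and estimate the two pieces separately. The free-space piece is a truncated convolution, and Young's convolution inequality (or the Schur test) bounds its operator norm by $\int_{|z|\leq R+2}|\Phi_k(z,0)|\,dz$. In three dimensions this integral is $O(R^2)$ independently of $k$, since $|\Phi_k(z,0)|=1/(4\pi|z|)$, matching $a(k)=1$. In two dimensions the substitution $s=k|z|$ and a splitting at $s=1$, combined with the asymptotics $H_0^{(1)}(s)=O(1+|\ln s|)$ as $s\to 0$ and $H_0^{(1)}(s)=O(s^{-1/2})$ as $s\to \infty$, produce a bound of order $1+|\ln k|$ when $k$ is small and of order $k^{-1/2}$ when $k$ is large; in both regimes this is $\lesssim a(k)$.

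For the Dirichlet corrector $\Psi_k(x,y)$, I would use that for each fixed $y\in B_2\setminus \overline{B}_\epsilon$, $\Psi_k(\cdot,y)$ is the unique outgoing solution of Helmholtz in $\RR^d\setminus \overline{B}_\epsilon$ with boundary value $-\Phi_k(\cdot,y)$ on $S_\epsilon$. The spherical symmetry of the obstacle permits an explicit separation-of-variables representation in spherical (respectively Fourier) harmonics, with each boundary mode propagated outwards via a ratio of outgoing and interior Hankel functions evaluated at $k|x|$ and $k\epsilon$. Expanding $\Phi_k(x,y)$ in the same basis via the Gegenbauer addition formula, one can control $|\Psi_k(x,y)|$ pointwise by the same type of bound as $|\Phi_k(x,y)|$, up to constants depending only on $R$ and $k_0$, so a second Young-type estimate yields the same $a(k)$ bound.

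The main obstacle I anticipate is the uniform control of the Hankel-function ratios in the regime $0<\epsilon k<k_0$. In two dimensions, the logarithmic singularity of $H_0^{(1)}$ at the origin interacts with the smallness of $\epsilon$, so the $n=0$ mode must be examined carefully to confirm it does not produce any factor worse than $a(k)$. A cleaner alternative would be to represent $\Phi_k^0$ via a boundary integral equation on $S_\epsilon$ and directly estimate the associated single-layer operator, replacing the series manipulations with operator estimates on $S_\epsilon$, though this still requires tracking the dependence on $\epsilon$.
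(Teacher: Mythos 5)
Your initial reduction (factoring out $M_{\epsilon,k}$ as the $L^\infty$ bound on $q(\cdot,k)-1$ and splitting $\Phi_k^0 = \Phi_k + \Psi_k$) coincides with the paper's decomposition $T = T_1 + T_2$. For the free-space piece $T_1$, your Young/Schur estimate is essentially interchangeable with the paper's approach (Cauchy--Schwarz in $y$ plus the $L^2$ kernel bound from Lemma~\ref{LEM Phi bounds}(i)); both yield the $\min\{1+|\ln k|, k^{-1/2}\}$ factor in $d=2$, which is $\lesssim a(k)$, and both give an $O(1)$ bound in $d=3$.

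The substantive divergence is in the treatment of the corrector $\Psi_k$, and here your plan has a real gap. You propose to establish pointwise bounds on $\Psi_k(x,y)$ via separation of variables, with each boundary mode propagated by ratios of Hankel functions, and you concede that controlling these ratios uniformly in the regime $0<\epsilon k<k_0$ ``must be examined carefully'' and is ``the main obstacle.'' That obstacle is precisely where a proof would have to live: one needs to expand $\Phi_k(\cdot,y)|_{S_\epsilon}$ via the addition theorem, control the ratios $H_n^{(1)}(k|x|)/H_n^{(1)}(k\epsilon)$ (resp.\ $h_n^{(1)}$ for $d=3$) for all $n$, including the logarithmically singular $n=0$ mode in $d=2$, sum the series uniformly in $\epsilon$, and then show the resulting kernel has Schur constants $\lesssim a(k)$. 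None of this is carried out; it is asserted that ``one can control $|\Psi_k(x,y)|$ pointwise by the same type of bound as $|\Phi_k(x,y)|$,'' which is the conclusion, not a step.

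The paper takes a different and cleaner route that avoids pointwise kernel estimates entirely. Setting $v=T_2u$ and $f=-T_1u$, it observes that $v$ is the outgoing solution of the exterior Dirichlet Helmholtz problem in $\RR^d\setminus\overline{B}_\epsilon$ with boundary data $f$ on $S_\epsilon$. It then invokes the Nguyen--Vogelius small-obstacle scattering estimates (Lemma~\ref{LEM NV}, scaled via Corollary~\ref{CORO small ball}) to bound $\|v\|_{L^2(B_R\setminus B_\epsilon)}$ by $\|f_\epsilon\|_{H^{1/2}(S_{1/2})}$, and controls the latter through $\|T_1u\|_{L^2(B_\epsilon)}$ and $\|\nabla T_1u\|_{L^2(B_\epsilon)}$, which are obtained (Lemma~\ref{LEM T1 bounds}(ii),(iv)) by Hölder and by Marcinkiewicz interpolation between $L^1$ and $L^\infty$ using Lemma~\ref{LEM Phi bounds}(ii). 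Besides being less computational, this PDE-based route automatically captures the extra smallness of the corrector in $\epsilon$ (the $\sqrt{\epsilon}$ factor in Lemma~\ref{LEM T2 bounds} for $d=3$), which your scheme would not readily produce, and it is where the slightly weaker $k^{-1/4}$ in $a(k)$ actually originates. You could in principle complete your separation-of-variables argument, but as written the key estimate on $\Psi_k$ is left unproven, so the proposal does not yet constitute a proof.
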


\begin{lemma} \label{LEM u^is}
Let $u^{is}$ be defined by \eqref{u^is}, let  $R>1$, and $k_0>0$. Assume $0<\epsilon k <k_0$ and that $u^i$ satisfies \eqref{u^i assumption 1}, then

\begin{equation*}
\|u^{is}\|_{L^2(B_R\setminus B_1)} \lesssim
\begin{cases}
\epsilon~, \hspace{0.7in}  &d=3~,
\\[0.1in]
\dfrac{|H_0^{(1)}(k)|}{|H_0^{(1)}(\epsilon k)|}~, & d=2~,
\end{cases}
\end{equation*}

\n and

\begin{equation*}
\|u^{is}\|_{L^2(B_R\setminus B_\epsilon)} \lesssim \epsilon^{d-2}~,
\end{equation*}

\n where the implicit constants depends only on $R, k_0$ and $C$ (the constant from the inequality \eqref{u^i assumption 1} for the incident field $u^i$).
\end{lemma}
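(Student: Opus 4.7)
The approach is to solve \eqref{u^is} by separation of variables in the exterior of $B_\epsilon$ and to estimate the resulting multipole series. Since $u^i$ is an entire Helmholtz solution, it has a regular multipole expansion $u^i(r\hat{x}) = \sum_n c_n j_n(kr) Y_n(\hat{x})$ in 3D (and $\sum_{n\in\mathbb{Z}} c_n J_n(kr) e^{in\theta}$ in 2D), and the unique radiating solution to \eqref{u^is} admits the corresponding outgoing-wave expansion. Matching the Dirichlet datum $u^{is}|_{S_\epsilon} = -u^i|_{S_\epsilon}$ yields, in 3D,
\begin{equation*}
u^{is}(r\hat{x}) \;=\; -\sum_n c_n\, \frac{j_n(k\epsilon)}{h_n^{(1)}(k\epsilon)}\, h_n^{(1)}(kr)\, Y_n(\hat{x}),
\end{equation*}
and analogously in 2D with $J_n$ and $H_n^{(1)}$. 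Orthogonality then converts the $L^2$-norms of $u^{is}$ into sums over $n$.

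The plan is to control this sum via two ingredients. First, Parseval on $S_\epsilon$ together with the $L^\infty$-hypothesis \eqref{u^i assumption 1} gives the uniform coefficient bound $\sum_n |c_n j_n(k\epsilon)|^2 = \int_{S^{d-1}}|u^i(\epsilon\hat{x})|^2\, d\sigma \lesssim C^2$. Second, small-argument Bessel--Hankel asymptotics $j_n(t)\sim t^n/(2n+1)!!$ and $|h_n^{(1)}(t)|\sim (2n-1)!!/t^{n+1}$ give, uniformly in $n\geq 0$ on the relevant range of arguments, the ratio bound $|h_n^{(1)}(kr)/h_n^{(1)}(k\epsilon)|^2 \lesssim (\epsilon/r)^{2n+2}$. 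Since $\epsilon/r\leq 1$ on the domain of integration, the factor $(\epsilon/r)^{2n+2}$ is dominated by its $n=0$ value $(\epsilon/r)^2$, and interchanging sum with integral collapses the estimate to
\begin{equation*}
\|u^{is}\|_{L^2(B_R\setminus B_\epsilon)}^2 \;\lesssim\; C^2\,\epsilon^2 \int_\epsilon^R r^{d-3}\, dr,
\end{equation*}
yielding the claimed $\epsilon^{d-2}$ bound (with a harmless $|\ln\epsilon|$ absorbed in 2D). The sharper estimate over $B_R\setminus B_1$ in 3D follows by integrating from $r=1$, which removes the singular region and yields $\|u^{is}\|_{L^2(B_R\setminus B_1)} \lesssim \epsilon$. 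In 2D, the modes $|n|\geq 1$ go through as above (with $(\epsilon/r)^{2n}$) and contribute only $O(\epsilon)$; the $n=0$ term, however, must be singled out because $H_0^{(1)}$ has a logarithmic singularity at the origin, and is handled separately using $|H_0^{(1)}(kr)|\lesssim |H_0^{(1)}(k)|$ for $r\in[1,R]$. This produces the anomalous factor $|H_0^{(1)}(k)|/|H_0^{(1)}(\epsilon k)|$, which dominates $\epsilon$ for small $\epsilon$ since $1/|H_0^{(1)}(\epsilon k)|\sim 1/|\ln\epsilon|$.

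The main obstacle will be establishing the Hankel-ratio bound uniformly in $n$ and in the full range of arguments permitted by the hypothesis $0<\epsilon k<k_0$. The small-argument asymptotics are only pointwise statements, and when $k$ is large (so that $kr$ itself is large) one must pass to the large-argument asymptotic $|h_n^{(1)}(kr)|\sim 1/(kr)$; matching these regimes requires quantitative monotonicity estimates for the Bessel functions (for instance, that $t^{n+1}|h_n^{(1)}(t)|$ is essentially monotone in $t$ and bounded below on the relevant range). The 2D monopole, with its logarithmic behavior, is the only genuinely delicate special case and is precisely what forces the non-power factor $|H_0^{(1)}(k)|/|H_0^{(1)}(\epsilon k)|$ in Theorem~\ref{THM main} for $d=2$.
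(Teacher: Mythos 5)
Your route---separation of variables with explicit multipole series---is genuinely different from the paper's. The paper instead sets $D = B_{1/2}$, rescales $B_\epsilon$ to $B_{1/2}$, and invokes an $H^1$ a priori estimate for exterior Dirichlet scattering from a fixed obstacle (Lemma~\ref{LEM NV}, quoted from Nguyen--Vogelius \cite{NV12-1}), after which the boundary datum is controlled by rescaling the $H^{1/2}(S_\epsilon)$ trace of $u^i$ and using \eqref{u^i assumption 1}. Your approach is more self-contained (it avoids importing \cite{NV12-1}) and also more economical on the data: you only need $\|u^i\|_{L^\infty(B_\epsilon)}$ for the $L^2$ trace, whereas the paper needs the gradient term in \eqref{u^i assumption 1} because it passes through $H^{1/2}$. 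These are real advantages, so the change of route is not a defect.

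There is, however, a genuine gap in the key step. You claim the uniform ratio bound $|h_n^{(1)}(kr)/h_n^{(1)}(k\epsilon)|^2 \lesssim (\epsilon/r)^{2n+2}$ for all $n\geq 0$ ``on the relevant range of arguments,'' but the hypothesis of the lemma is only $0 < \epsilon k < k_0$; the wavenumber $k$ itself is unbounded. When $kr$ is large one has $|h_n^{(1)}(kr)| \sim (kr)^{-1}$, so the ratio behaves like $(k\epsilon)^{n+1}/[(2n-1)!!\,kr]$, which exceeds $(\epsilon/r)^{n+1}$ by a factor $\sim (kr)^n/(2n-1)!!$; the claimed bound therefore fails for $n\geq 1$ as soon as $kr$ leaves the small-argument regime. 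Moreover, the fix you propose does not repair this: the quantity $t^{n+1}|h_n^{(1)}(t)|$ is indeed monotone and bounded below by $(2n-1)!!$, but it is \emph{increasing} from $(2n-1)!!$ at $0$ to $\sim t^n$ at infinity, so its monotonicity yields a \emph{lower} bound $|h_n^{(1)}(kr)/h_n^{(1)}(k\epsilon)| \geq (\epsilon/r)^{n+1}$, the wrong direction. What you actually need (and what suffices, since you only use the $n=0$ case of the claimed bound in the end) is the weaker, correct monotonicity: the Nicholson integral representation gives that $t\,|H_\nu^{(1)}(t)|^2$ is non-increasing for $\nu\geq 1/2$, hence $t\,|h_n^{(1)}(t)|$ is non-increasing for all integer $n\geq 0$, yielding the uniform bound $|h_n^{(1)}(kr)/h_n^{(1)}(k\epsilon)| \leq \epsilon/r$ on $r\geq\epsilon$. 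In $d=2$ the same Nicholson argument gives $|H_n^{(1)}(kr)/H_n^{(1)}(k\epsilon)|^2 \leq \epsilon/r$ for $|n|\geq 1$, while $n=0$ must be handled separately exactly as you indicate (here $t|H_0^{(1)}(t)|^2$ is in fact \emph{increasing}, so the ratio is not controlled by $\epsilon/r$ and the factor $|H_0^{(1)}(k)|/|H_0^{(1)}(\epsilon k)|$ enters). You should therefore replace the claimed bound and its proposed justification with the Nicholson/Watson monotonicity of $t\,|h_n^{(1)}(t)|$ (and $t\,|H_n^{(1)}(t)|^2$ for $|n|\geq1$); with that correction the rest of the computation goes through and reproduces both asserted estimates.
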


\n
With the help of the above lemmata we now prove the following scattering estimate:

\begin{theorem} \label{THM u-u^i bounds}
Let $M_{\epsilon,k}$ and $a(k)$  be defined by \eqref{M} and \eqref{a(k)}, respectively.
Suppose $R>2$, $k_0>0$, and $0<\epsilon k < k_0$, and suppose $u^i$ satisfies \eqref{u^i assumption 1}. Let $u$ be be the solution to \eqref{main u}. There exists a constant $c=c(k_0, R)>0$ such that, if $k^2 a(k) M_{\epsilon, k} < c$, then

\begin{equation} \label{u-u^i upto S_eps}
\|u-u^i\|_{L^2(B_R \setminus B_\epsilon)} \lesssim \epsilon^{d-2} + k^2a(k) M_{\epsilon, k} \|u^i\|_{L^2(B_R)}~, \qquad \qquad for \ d=2,3~~,
\end{equation}

\n and

\begin{equation} \label{u-u^i away S_eps d=2}
\|u-u^i\|_{L^2(B_R \setminus B_1)} \lesssim \frac{|H_0^{(1)}(k)|}{|H_0^{(1)}(\epsilon k)|} + k^2a(k) M_{\epsilon, k} \left( 1 +\|u^i\|_{L^2(B_R)} \right)~, \qquad \qquad for \ d=2~,
\end{equation}

\n where the implicit constants depend only on $R, k_0$ and $C$ (the constant from the inequality \eqref{u^i assumption 1}).
\end{theorem}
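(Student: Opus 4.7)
The plan is to treat the Lippmann--Schwinger equation \eqref{LS} on $B_R\setminus B_\epsilon$ as the main vehicle, and to combine it with the two $\epsilon$-explicit estimates in Lemma~\ref{LEM T bounds} and Lemma~\ref{LEM u^is}. The starting point is to rewrite \eqref{LS} as
\begin{equation*}
u - u^i = u^{is} + Tu \qquad \text{on } B_R\setminus B_\epsilon,
\end{equation*}
which immediately reduces the problem to bounding the two pieces on the right. The constant $c=c(k_0,R)>0$ will be chosen so small that, by Lemma~\ref{LEM T bounds}, the hypothesis $k^2 a(k) M_{\epsilon,k}<c$ guarantees $\|T\|_{L^2(B_R\setminus B_\epsilon)\to L^2(B_R\setminus B_\epsilon)}\le 1/2$, making $I-T$ invertible with $\|(I-T)^{-1}\|\le 2$.

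The first step is to obtain an a priori bound on $\|u\|_{L^2(B_R\setminus B_\epsilon)}$. Applying $(I-T)^{-1}$ to both sides of \eqref{LS} gives
\begin{equation*}
\|u\|_{L^2(B_R\setminus B_\epsilon)} \le 2\bigl(\|u^i\|_{L^2(B_R\setminus B_\epsilon)} + \|u^{is}\|_{L^2(B_R\setminus B_\epsilon)}\bigr) \lesssim \|u^i\|_{L^2(B_R)} + \epsilon^{d-2},
\end{equation*}
where the second inequality uses the second bound of Lemma~\ref{LEM u^is}. In particular, since $B_2\setminus B_\epsilon \subset B_R\setminus B_\epsilon$, the same estimate holds with the smaller domain on the left.

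The second step is to estimate $u-u^i = u^{is}+Tu$ in $L^2(B_R\setminus B_\epsilon)$. The triangle inequality, together with Lemma~\ref{LEM T bounds} applied to the a priori bound from the first step, gives
\begin{equation*}
\|u-u^i\|_{L^2(B_R\setminus B_\epsilon)} \lesssim \epsilon^{d-2} + k^2 a(k) M_{\epsilon,k}\bigl(\|u^i\|_{L^2(B_R)} + \epsilon^{d-2}\bigr).
\end{equation*}
Because $k^2 a(k) M_{\epsilon,k}<c$ is bounded, the cross term $k^2 a(k) M_{\epsilon,k}\,\epsilon^{d-2}$ is absorbed into the $\epsilon^{d-2}$ term, yielding \eqref{u-u^i upto S_eps}. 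For \eqref{u-u^i away S_eps d=2} we estimate on the larger exterior domain $B_R\setminus B_1$: the contribution of $u^{is}$ is controlled by the sharper first bound of Lemma~\ref{LEM u^is}, namely $|H_0^{(1)}(k)|/|H_0^{(1)}(\epsilon k)|$, while $\|Tu\|_{L^2(B_R\setminus B_1)}\le \|Tu\|_{L^2(B_R\setminus B_\epsilon)}$ is handled exactly as before. Specializing the a priori bound to $d=2$, where $\epsilon^{d-2}=1$, produces the factor $1+\|u^i\|_{L^2(B_R)}$ in the $T$-term, which is precisely \eqref{u-u^i away S_eps d=2}.

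There is no real obstacle here beyond a careful bookkeeping of the two nested applications of Lemma~\ref{LEM T bounds}: once to justify the Neumann series/invertibility of $I-T$ that produces the a priori control of $u$, and once more to bound $Tu$ in terms of that control. The only mild subtlety is that the sharper Lemma~\ref{LEM u^is} bound on $B_R\setminus B_1$ (as opposed to $B_R\setminus B_\epsilon$) is what forces the two different right-hand sides in \eqref{u-u^i upto S_eps} and \eqref{u-u^i away S_eps d=2}; in the $d=2$ case it is this sharper estimate that ultimately drives the cloaking rate.
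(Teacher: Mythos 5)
Your proposal is correct and follows essentially the same route as the paper: Lippmann--Schwinger equation $u-u^i=u^{is}+Tu$, invertibility of $I-T$ via Lemma~\ref{LEM T bounds}, and then Lemma~\ref{LEM u^is} for $u^{is}$; the only cosmetic difference is that you extract an a priori bound on $\|u\|_{L^2(B_R\setminus B_\epsilon)}$ first and then bound $Tu$, whereas the paper sums the Neumann series for $u-u^i$ directly, but these are algebraically equivalent and land on the same estimates.
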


\begin{remark}
\normalfont
As an immediate corollary we obtain Theorem~\ref{THM main}, because $u-u^i=u^s=u^s_c$ outside $B_2$. 
\end{remark}

\begin{proof}
Consider the Lippmann-Schwinger equation \eqref{LS} in the space $L^2(B_R \setminus B_\epsilon)$. Lemma~\ref{LEM T bounds} implies that there exists a constant $C_1=C_1(k_0,R)>0$, such that

\begin{equation*}
\|T\|_{L^2(B_R \setminus B_\epsilon) \to L^2(B_R \setminus B_\epsilon)} \leq C_1 k^2 a(k) M_{\epsilon, k}=:r
\end{equation*}

\n Assume that $r <\frac{1}{2}$, or equivalently $k^2 a(k) M_{\epsilon,k} < \frac{1}{2C_1}=:c$. Then the operator $I-T$ is invertible on $L^2(B_R \setminus B_\epsilon)$ and using \eqref{LS} and the Neumann series expansion we obtain

\begin{equation*}
u = (I-T)^{-1} (u^i+u^{is}) = u^i+u^{is} + \sum_{n=1}^\infty T^n (u^i+u^{is}).
\end{equation*}

\n Upon summation of the geometric series, the above equation implies the bound

\begin{equation*}
\begin{split}
\|u-u^i\|_{L^2(B_R \setminus B_\epsilon)} &\leq \|u^{is}\|_{L^2(B_R \setminus B_\epsilon)} + \frac{r}{1-r} \|u^i + u^{is}\|_{L^2(B_R \setminus B_\epsilon)} 
\\
&\leq \|u^{is}\|_{L^2(B_R \setminus B_\epsilon)} + 2r \left( \|u^i\|_{L^2(B_R)} + \|u^{is}\|_{L^2(B_R \setminus B_\epsilon)} \right)
\\
&\lesssim \|u^{is}\|_{L^2(B_R \setminus B_\epsilon)} + r \|u^i\|_{L^2(B_R)} 
\\
&\lesssim \epsilon^{d-2} + r\|u^i\|_{L^2(B_R)},
\end{split}
\end{equation*}

\n where in the last step we used Lemma~\ref{LEM u^is}. This concludes the proof of the inequality \eqref{u-u^i upto S_eps}.

\n To prove \eqref{u-u^i away S_eps d=2}, we take  $d=2$. From the Lippmann-Schwinger equation $u-u^i = u^{is} + Tu$, and hence, using Lemma~\ref{LEM u^is} we have ,

\begin{equation*}
\|u-u^i\|_{L^2(B_R \setminus B_1)} \leq \|u^{is}\|_{L^2(B_R \setminus B_1)} + \|T u\|_{L^2(B_R \setminus B_\epsilon)} \lesssim \frac{|H_0^{(1)}(k)|}{|H_0^{(1)}(\epsilon k)|} + r \|u\|_{L^2(B_R \setminus B_\epsilon)}.
\end{equation*}

\n From \eqref{u-u^i upto S_eps} with $d=2$ we have

\begin{equation*}
\begin{split}
\|u\|_{L^2(B_R \setminus B_\epsilon)} &\leq \|u^i\|_{L^2(B_R)} + \|u-u^i\|_{L^2(B_R \setminus B_\epsilon)} \lesssim \|u^i\|_{L^2(B_R)} + 1 + k^2 a(k) M_{\epsilon, k} \|u^i\|_{L^2(B_R)} 
\\
&\lesssim 1 + \|u^i\|_{L^2(B_R)},
\end{split}
\end{equation*}

\n where in the last step we used the assumption that  $k^2 a(k) M_{\epsilon, k}<c$. A combination of the last two estimates and insertion of  $r= C_1 k^2 a(k) M_{\epsilon, k}$ leads to  \eqref{u-u^i away S_eps d=2}.

\end{proof}

\n Before proceeding to the proof of Theorem~\ref{THM main coro}, we first estimate the far field pattern $u_\infty$, given by \eqref{far field}, in terms of the $L^2(B_5 \setminus B_2)$-norm of the scattered field $u^s$,  given by \eqref{u^s and u^t} and \eqref{main u_c}. 

\medskip
\n
In the following,  by using the term ``an absolute implicit constant", we signify that, the inequality in question holds with a positive constant independent of the involved parameters.
\begin{lemma} \label{LEM far field}
With an absolute implicit constant, for any $|\hat{x}|=1$ and $k>0$,

\begin{equation}
|u_\infty(\hat{x})| \lesssim (1+k^3) \|u^s\|_{L^2(B_5\setminus B_2)} 
\begin{cases}
1~, \qquad &d=3~,
\\[.1in]
k^{-\frac{1}{2}}~, & d=2~.
\end{cases}
\end{equation}

\end{lemma}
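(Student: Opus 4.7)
The plan is to express $u_\infty(\hat x)$ as a volume integral over an annulus strictly outside $B_2$, apply Cauchy--Schwarz, and then trade the gradient of $u^s$ for its $L^2$-norm on a slightly larger annulus by means of an interior Caccioppoli estimate. The starting point is the Green's (Kirchhoff--Helmholtz) representation: since $\Delta u^s + k^2 u^s = 0$ in $\RR^d \setminus \overline{B}_2$ with the Sommerfeld radiation condition, for every $\rho > 2$ and every $x$ with $|x| > \rho$,
\begin{equation*}
u^s(x) = \int_{S_\rho} \bigl\{ u^s(y)\, \partial_{\nu(y)} \Phi_k(x,y) - \partial_\nu u^s(y)\, \Phi_k(x,y) \bigr\}\, ds(y).
\end{equation*}
Inserting the classical asymptotic expansion of $\Phi_k(x,y)$ as $|x|\to\infty$ and matching against \eqref{far field}, I would read off
\begin{equation*}
u_\infty(\hat x) = \gamma_d \int_{S_\rho} \bigl\{ u^s(y)\, \partial_r e^{-ik\hat x \cdot y} - \partial_r u^s(y)\, e^{-ik\hat x \cdot y} \bigr\}\, ds(y),
\end{equation*}
where $|\gamma_3|=1/(4\pi)$ and $|\gamma_2|=1/\sqrt{8\pi k}$; the $k^{-1/2}$ factor in the two-dimensional bound is thus already built into $\gamma_d$.

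Since the left-hand side is independent of $\rho$, the next step is to integrate this identity over $\rho\in[3,4]$ and use $\int_3^4 \int_{S_\rho} f\,ds\,d\rho = \int_{B_4\setminus B_3} f\,dy$ to convert the surface integral into a volume integral. The elementary pointwise bounds $|\partial_r e^{-ik\hat x\cdot y}|\le k$ and $|e^{-ik\hat x\cdot y}|=1$, combined with Cauchy--Schwarz, then give
\begin{equation*}
|u_\infty(\hat x)| \lesssim |\gamma_d|\bigl(k\,\|u^s\|_{L^2(B_4\setminus B_3)} + \|\nabla u^s\|_{L^2(B_4\setminus B_3)}\bigr).
\end{equation*}

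Finally I would apply an interior Caccioppoli estimate for the Helmholtz equation: picking $\chi\in C^\infty_c(B_5\setminus \overline{B}_2)$ with $\chi\equiv 1$ on $B_4\setminus B_3$, testing $\Delta u^s + k^2 u^s = 0$ against $\chi^2\overline{u^s}$, integrating by parts and absorbing half of the gradient term via Young's inequality, one arrives at
\begin{equation*}
\|\nabla u^s\|_{L^2(B_4\setminus B_3)} \lesssim (1+k)\,\|u^s\|_{L^2(B_5\setminus B_2)}.
\end{equation*}
Combining the two previous displays yields $|u_\infty(\hat x)| \lesssim |\gamma_d|(1+k)\|u^s\|_{L^2(B_5\setminus B_2)}$, which is sharper than -- and therefore implies -- the claimed $(1+k^3)$ bound, with the extra $k^{-1/2}$ factor in two dimensions coming from $\gamma_2$. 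There is no real obstacle in this scheme; the only mildly delicate point is keeping the precise $k$-dependence through the Caccioppoli inequality, but that tracking is entirely routine.
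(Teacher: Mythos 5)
Your proposal is correct and takes a cleaner route than the paper, with the substantive difference occurring in the middle step. Both proofs start from the same Kirchhoff--Helmholtz representation of $u_\infty(\hat x)$ and both finish with the same Caccioppoli-type estimate $\|\nabla u^s\|_{L^2(B_4\setminus B_3)}\lesssim (1+k)\|u^s\|_{L^2(B_5\setminus B_2)}$, so those two ends are identical in substance. The difference is how the middle is handled: the paper keeps the surface integral fixed on $S_4$, splits it by duality $H^{-1/2}\subset L^2\subset H^{1/2}$, invokes trace estimates, and then constructs an explicit extension $w_\phi$ of a test function to bound $\|\partial_\nu u^s\|_{H^{-1/2}(S_4)}$ via the equation; this functional-analytic detour is where the $(1+k^3)$ accumulates. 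You instead exploit that the representation is independent of the radius $\rho$, average over $\rho\in[3,4]$, and use the coarea identity $\int_3^4\int_{S_\rho} f\,ds\,d\rho = \int_{B_4\setminus B_3}f\,dy$ to turn the surface integral into a volume integral at no cost, after which Cauchy--Schwarz immediately yields $|u_\infty(\hat x)|\lesssim |\gamma_d|\bigl(k\|u^s\|_{L^2(B_4\setminus B_3)} + \|\nabla u^s\|_{L^2(B_4\setminus B_3)}\bigr)$ with no trace or duality machinery at all. That buys you both simplicity and a sharper $k$-dependence: $(1+k)$ rather than $(1+k^3)$, which of course implies the stated lemma since $1+k\lesssim 1+k^3$. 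The one small detail you glide over, the identification $\partial_r = \partial_\nu$ on spheres and $|\partial_r u^s|\le|\nabla u^s|$ after the coarea step, is indeed trivial. Everything else checks out.
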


\begin{proof}
The far field pattern has the following representation \cite{CK19}:

\begin{equation*}
u_\infty(\hat{x}) = \CI \cdot
\begin{cases}
\dfrac{1}{4\pi}~, \qquad &d=3~,
\\[.1in]
\dfrac{e^{i \frac{\pi}{4}}}{\sqrt{8\pi k}}~, & d=2~,
\end{cases}
\end{equation*}

\n where

$$\CI = \int_{S_4} \left(u^s(y) \partial_{\nu_y} e^{-ik \hat{x} \cdot y} - \partial_\nu u^s(y) e^{-ik \hat{x} \cdot y}\right) ds(y)~,$$
and $S_4$ is the $d-1$-sphere of radius $4$ centered at the origin (note that one could use any $d-1$ manifold circumscribing $B_2$ in its interior).
\n Using H{\"o}lder's inequality and the duality $H^{-\frac{1}{2}} \subset L^2 \subset H^{\frac{1}{2}}$ with the pivot space $L^2$, we can bound

\begin{equation*}
\begin{split}
|\CI|  \lesssim & k \|u^s\|_{L^2(S_4)} + \| e^{-ik \hat{x} \cdot y}\|_{H^{\frac{1}{2}}(S_4)} \|\partial_\nu u^s\|_{H^{-\frac{1}{2}}(S_4)} 
\\[.05in]
\lesssim & k \|u^s\|_{H^1(B_4 \setminus B_3)} + \| e^{-ik \hat{x} \cdot y}\|_{H^1(B_4 \setminus B_3)}  \|\partial_\nu u^s\|_{H^{-\frac{1}{2}}(S_4)} 
\\[.05in]
\lesssim & k \|u^s\|_{H^1(B_4 \setminus B_3)} + (1+k)  \|\partial_\nu u^s\|_{H^{-\frac{1}{2}}(S_4)}~,
\end{split}
\end{equation*}

\n where in the second step we used trace estimates. Next we bound the $H^{-\frac{1}{2}}$-norm of $\partial_\nu u^s$. Given any $\phi \in H^\frac{1}{2}(S_4)$, consider its extension to $B_4 \setminus \overline{B}_3$ via a bounded right inverse of the trace operator: 

\begin{equation} \label{w_phi H^1}
\begin{cases}
w_\phi \in H^1(B_4\setminus \overline{B}_3)
\\
w_\phi = 0~, \qquad &\text{on} \ S_3~,
\\
w_\phi = \phi~, &\text{on} \ S_4~.
\end{cases}
\end{equation}

\n As this defines a bounded operator from $H^\frac{1}{2}(S_3 \cup S_4)$ to $H^1(B_4 \setminus \overline{B}_3)$, we have that with an absolute implicit constant

\begin{equation} \label{w_phi bound}
\|w_\phi\|_{H^1(B_4 \setminus B_3)} \lesssim \|\phi\|_{H^{\frac{1}{2}}(S_4)}~.
\end{equation}

\n Now using the fact that $u^s$ satisfies the Helmholtz equation in $B_4\setminus \overline{B}_3$, we obtain

\begin{equation*}
\langle \partial_\nu u^s, \phi \rangle = \int_{B_4\setminus B_3} \nabla u^s \cdot \nabla w_\phi + w_\phi \Delta u^s dy = \int_{B_4\setminus B_3} \nabla u^s \cdot \nabla w_\phi - k^2 w_\phi u^s dy~,
\end{equation*}

\n where $\langle \cdot, \cdot \rangle$ denotes the dual pairing between $H^{-\frac{1}{2}}(S_4)$ and $H^{\frac{1}{2}}(S_4)$. Using the H{\"o}lder's inequality and \eqref{w_phi bound} we arrive at

\begin{equation*}
|\langle \partial_\nu u^s, \phi \rangle| \lesssim \|\phi\|_{H^{\frac{1}{2}}(S_4)} \left(\|\nabla u^s\|_{L^2(B_4\setminus B_3)} + k^2 \|u^s\|_{L^2(B_4\setminus B_3)} \right)~,
\end{equation*}

\n which readily implies

\begin{equation*}
\|\partial_\nu u^s\|_{H^{-\frac{1}{2}}(S_4)} \lesssim \|\nabla u^s\|_{L^2(B_4\setminus B_3)} + k^2 \|u^s\|_{L^2(B_4\setminus B_3)}~.
\end{equation*}

\n Using that $k+k^2+k^3 \lesssim k+k^3$, we obtain the bound

\begin{equation} \label{I bound}
|\CI| \lesssim (k+k^3) \|u^s\|_{L^2(B_4\setminus B_3)} + (1+k) \|\nabla u^s\|_{L^2(B_4\setminus B_3)}. 
\end{equation}
\n
It remains to bound the $L^2$-norm of $\nabla u^s$, which can be done via the $L^2$-norm of $u^s$ over a larger domain by introducing a cut-off function and using the equation that $u^s$ satisfies. Indeed, let $0\leq \psi \leq 1$ be a cut-off function such that $\text{supp} \ \psi \subset B_5 \setminus \overline{B}_2$, \ $\psi \equiv 1$ on $B_4 \setminus \overline{B}_3$ and $|\nabla \psi| \leq C$ on $B_5 \setminus \overline{B}_2$, with an absolute constant $C>0$. Since

$$\Delta u^s + k^2 u^s = 0, \qquad \qquad \text{in} \ B_5 \setminus \overline{B}_2~,$$

\n multiplication by $\psi^2 \overline{u^s}$ and integration by parts leads to

\begin{equation*}
\begin{split}
\int_{B_5 \setminus B_2} |\nabla u^s|^2 \psi^2 dy &= k^2 \int_{B_5 \setminus B_2} |u^s|^2 \psi^2 dy - 2 \int_{B_5 \setminus B_2} \psi \nabla u^s \cdot \overline{u^s} \nabla \psi dy 
\\
&\leq k^2 \int_{B_5 \setminus B_2} |u^s|^2 \psi^2 dy + \frac{1}{2} \int_{B_5 \setminus B_2} |\nabla u^s|^2 \psi^2 dy + 2 \int_{B_5 \setminus B_2} |u^s|^2 |\nabla \psi|^2 dy~,
\end{split}
\end{equation*}

\n which implies

\begin{equation*}
\frac{1}{2} \int_{B_5 \setminus B_2} |\nabla u^s|^2 \psi^2 dy \leq (k^2 + 2C^2) \int_{B_5 \setminus B_2} |u^s|^2 dy~.
\end{equation*}

\n Consequently,

\begin{equation*}
\|\nabla u^s\|_{L^2(B_4\setminus B_3)} \lesssim (1+k) \|u^s\|_{L^2(B_5\setminus B_2)}~. 
\end{equation*}

\n Combining with \eqref{I bound} we obtain

\begin{equation} \label{I bound 2}
|\CI| \lesssim (k+k^3) \|u^s\|_{L^2(B_5\setminus B_2)} + (1+k)^2 \|u^s\|_{L^2(B_5\setminus B_2)} \lesssim (1+k^3) \|u^s\|_{L^2(B_5\setminus B_2)}~,
\end{equation}
which concludes the proof.
\end{proof}

We are ready to establish the following broadband approximate cloaking estimates:

\begin{theorem}
Let $R>2$ and $k_+>k_->0$, and set $\Gamma = [k_-,k_+]$. Assume that for some constant $c_*>0$, \ $k_\epsilon^2 > c_* \epsilon^{-3}$ for $d=3$, and $k_\epsilon^2 > c_* |\ln \epsilon|/\epsilon$ for $d=2$. Assume further that $u^i$ satisfies the estimate \eqref{u^i assumption 2}. Let $u$ be the solution to \eqref{main u} with $q$ given by \eqref{q}. There exists a constant $c_1=c_1(k_-,k_+,R, c_*)>0$ such that, for all $\epsilon < c_1$ and $k \in \Gamma$

\begin{equation*}
\|u-u^i\|_{L^2(B_R \setminus B_1)} \lesssim
\begin{cases}
\epsilon~, \hspace{1.3in} & d=3~,
\\[.01in]
1/ |\ln \epsilon|~, & d=2~,
\end{cases}
\end{equation*}

\n where the implicit constant depends only on $k_-,k_+,R, c_*$ and $C_R$ (the constants from \eqref{u^i assumption 2}). 
Furthermore, there exists a constant $c_2=c_2(k_-,k_+,c_*)>0$, such that for all $\epsilon < c_2$, \ $k \in \Gamma$ and $|\hat{x}|=1$, 

\begin{equation*}
|u_\infty(\hat{x})| \lesssim
\begin{cases}
\epsilon~, \hspace{1.3in} & d=3~,
\\[.01in]
1/ |\ln \epsilon|~, & d=2~,
\end{cases}
\end{equation*}

\n where the implicit constant depends only on $k_-,k_+, c_*$ and $C_5$.
\end{theorem}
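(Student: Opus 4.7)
The plan is to reduce this theorem to a direct application of Theorem~\ref{THM u-u^i bounds} and Lemma~\ref{LEM far field}, once we have shown that the growth assumption on $k_\epsilon$ forces $M_{\epsilon,k}$ to be small, uniformly in $k\in\Gamma$. Concretely, from the definition \eqref{M} and the explicit formula for $\det DF$ in \eqref{det DF}, the maximum of $|\det DF|$ over $B_2\setminus B_\epsilon$ is attained at $|x|=\epsilon$, where it equals $[(2-\epsilon)\epsilon^{d-1}]^{-1}$. For $k\in\Gamma=[k_-,k_+]$ and $k_\epsilon$ large one has $|\sigma_\epsilon(k)|=|k_\epsilon^2-k^2-ik|^{-1}\lesssim k_\epsilon^{-2}$. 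Combining,
\begin{equation*}
M_{\epsilon,k}\;=\;|\sigma_\epsilon(k)|\,\|\det DF\|_{L^\infty(B_2\setminus B_\epsilon)}\;\lesssim\;\frac{1}{k_\epsilon^2\,\epsilon^{d-1}}.
\end{equation*}
Plugging in the hypothesis $k_\epsilon^2>c_*\epsilon^{-3}$ when $d=3$ yields $M_{\epsilon,k}\lesssim \epsilon$, while $k_\epsilon^2>c_*|\ln\epsilon|/\epsilon$ when $d=2$ yields $M_{\epsilon,k}\lesssim 1/|\ln\epsilon|$, uniformly over $k\in\Gamma$.

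With this in hand I would verify the smallness condition required in Theorem~\ref{THM u-u^i bounds}: since $k\mapsto k^2 a(k)$ is bounded on $\Gamma$ by a constant depending only on $k_\pm$, there is $c_1=c_1(k_-,k_+,R,c_*)>0$ such that $\epsilon<c_1$ forces $k^2 a(k)M_{\epsilon,k}<c(k_0,R)$. Then Theorem~\ref{THM u-u^i bounds} and the incident-field assumption \eqref{u^i assumption 2} give, for $d=3$,
\begin{equation*}
\|u^s_c\|_{L^2(B_R\setminus B_2)}\le \|u-u^i\|_{L^2(B_R\setminus B_\epsilon)}\lesssim \epsilon+k^2 a(k)M_{\epsilon,k}\,C_R\lesssim \epsilon.
\end{equation*}
For $d=2$ I would invoke \eqref{u-u^i away S_eps d=2} and handle the Hankel-quotient term using the small-argument asymptotics $H_0^{(1)}(z)\sim (2i/\pi)\ln z$: for bounded $k\in\Gamma$, the numerator $|H_0^{(1)}(k)|$ is bounded above and the denominator satisfies $|H_0^{(1)}(\epsilon k)|\sim (2/\pi)|\ln\epsilon|$ as $\epsilon\to 0$, so $|H_0^{(1)}(k)|/|H_0^{(1)}(\epsilon k)|\lesssim 1/|\ln\epsilon|$; combined with the $M_{\epsilon,k}$ bound this yields $\|u^s_c\|_{L^2(B_R\setminus B_2)}\lesssim 1/|\ln\epsilon|$.

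For the far field estimate I would apply Lemma~\ref{LEM far field} with the specific choice $R=5$. Running the $L^2$-estimate above with $R=5$ gives $\|u^s\|_{L^2(B_5\setminus B_2)}\lesssim \epsilon$ (resp.\ $1/|\ln\epsilon|$), and since $(1+k^3)$ and $(1+k^3)k^{-1/2}$ are bounded on $\Gamma$ by constants depending only on $k_\pm$, we obtain the stated uniform far-field bound. The constants $c_2$ in the far-field statement arise from the same smallness condition applied with $R=5$, hence $c_2=c_2(k_-,k_+,c_*)$ is independent of $R$.

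The only delicate point worth flagging is the uniformity in $k\in\Gamma$: all implicit constants must be tracked to depend only on $k_\pm,R,c_*,C_R$ (or $C_5$), which requires using $|\sigma_\epsilon(k)|\lesssim k_\epsilon^{-2}$ with a constant uniform in $k\in\Gamma$ (valid since $k_\epsilon\to\infty$ so $k_\epsilon^2-k^2\ge k_\epsilon^2/2$ for $\epsilon$ small), and using that $k\mapsto k^2 a(k)$ and $k\mapsto (1+k^3)$ are bounded on the compact interval $\Gamma$. Nothing beyond this uniform control, together with the two earlier results, is needed.
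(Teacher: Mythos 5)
Your proposal follows essentially the same route as the paper: estimate $M_{\epsilon,k}$ explicitly via $\det DF$ and $|\sigma_\epsilon(k)|\lesssim k_\epsilon^{-2}$, verify the smallness condition $k^2 a(k)M_{\epsilon,k}<c$ using compactness of $\Gamma$, feed the result into Theorem~\ref{THM u-u^i bounds}, control the Hankel quotient using the small-argument asymptotics, and finish the far-field bound with Lemma~\ref{LEM far field} at $R=5$. The computations of $M_{\epsilon,k}$ and the $M_{\epsilon,k}\lesssim\epsilon$ (resp. $1/|\ln\epsilon|$) bounds are correct.

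There is one gap worth flagging. Theorem~\ref{THM u-u^i bounds} is stated under the hypothesis \eqref{u^i assumption 1}, i.e. an $L^\infty(B_\epsilon)$ bound on $u^i$ and $\epsilon\nabla u^i$. The theorem you are proving only assumes \eqref{u^i assumption 2}, an $L^2(B_R)$ bound. You apply Theorem~\ref{THM u-u^i bounds} directly, without verifying that \eqref{u^i assumption 2} implies \eqref{u^i assumption 1}. This does hold, but it requires an argument: since $u^i$ solves $\Delta u^i + k^2 u^i = 0$ in all of $\mathbb{R}^d$, interior elliptic regularity gives $\|u^i\|_{L^\infty(B_1)} + \|\nabla u^i\|_{L^\infty(B_1)} \le C(k_+)\|u^i\|_{L^2(B_2)}$, uniformly for $k\in\Gamma$, so \eqref{u^i assumption 1} follows from \eqref{u^i assumption 2} (for $\epsilon<1$). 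The paper makes exactly this observation at the beginning of its proof; without it, the invocation of Theorem~\ref{THM u-u^i bounds} is not justified. Everything else in your argument — including the uniformity points you flag at the end and the treatment of the Hankel quotient (the paper spells this out as $|H_0^{(1)}(\epsilon k)|\gtrsim|\ln(\epsilon k)|\ge\tfrac12|\ln\epsilon|$ for $\epsilon$ small, which is the substance of your "$\sim(2/\pi)|\ln\epsilon|$" claim) — matches the paper's reasoning.
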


\begin{remark} \mbox{}
\normalfont

\begin{enumerate}
\item[(i)] Since $u-u^i = u^s_c$ outside of $B_2$ Theorem~\ref{THM main coro} follows as an immediate corollary of the above result.
\item[(ii)] For $d=3$ the following proof can be easily modified to show we can bound $u-u^i$ up to the inner boundary $S_\epsilon$, i.e., 

\begin{equation*}
\|u-u^i\|_{L^2(B_R \setminus B_\epsilon)} \lesssim \epsilon~.
\end{equation*}
\end{enumerate}
\end{remark}

\begin{proof}

We note that since $u^i$ is a solution to $\Delta u^i+k^2u^i=0$ in all of ${\mathbb R}^d$, it follows by interior elliptic regularity estimates that for $k\in \Gamma$, $\Vert u^i \Vert_{L^\infty(B_1)}+ \Vert \nabla u^i\Vert_{L^\infty(B_1)} \le C\Vert u^i \Vert_{L^2(B_2)}$, with a constant that only depends on $k_+$. Due to \eqref{u^i assumption 2} we thus conclude that $u^i$, $k\in \Gamma$, satisfies the condition \eqref{u^i assumption 1} as well (for $\epsilon<1$) with a constant that only depends on $C_2$ and $k_+$.

We proceed to estimate $M_{\epsilon, k}$. In view of \eqref{M}, \eqref{q} and Lemma~\ref{LEM F},

\begin{equation*}
M_{\epsilon, k} = |\sigma_\epsilon(k)| \|\det DF\|_{L^\infty(B_2 \setminus B_\epsilon)} = \frac{|\sigma_{\epsilon}(k)|}{(2-\epsilon)^d} \sup_{r \in (\epsilon, 2)} \left( 1+ \frac{2-2\epsilon}{r} \right)^{d-1} = \frac{|\sigma_{\epsilon}(k)|}{(2-\epsilon) \epsilon^{d-1}}.
\end{equation*}

\n Assume that $\epsilon<1$ is so small that

\begin{equation} \label{k_eps ineq}
k_\epsilon^2 \geq \max\left\{ k_+^2, 2(k_+^2-k_+) \right\},
\end{equation}

\n then for any $k\in [0,k_+]$

\begin{equation*}
|\sigma_{\epsilon}(k)| \leq \frac{\sqrt{2}}{|k_\epsilon^2 - k^2| + k} = \frac{\sqrt{2}}{k_\epsilon^2 - k^2 + k},
\end{equation*}

\n where in the last step we used that $k_\epsilon^2 \geq k^2_+$. The function $k\mapsto k_\epsilon^2 - k^2 + k$ is positive and increasing on $[0,\frac12]$, and it is positive and decreasing on $[\frac12,k_+]$ (if $k_+>\frac12$). Thus it follows that 
$$
k_\epsilon^2-k^2+k \ge \min \{ k_\epsilon^2, k_\epsilon^2 - k_+^2+k_+ \} \ge  \frac{k_\epsilon^2}{2}~~ \hbox{ for } k \in [0,k_+]~,
$$
where in the second inequality we have used that $k_\epsilon^2 \geq 2(k_+^2 - k_+)$. As a consequence
\begin{equation*}
\max_{k\in\Gamma} |\sigma_{\epsilon}(k)| \leq \frac{2\sqrt{2}}{k_\epsilon^2} \leq \frac{2\sqrt{2}}{c_*}
\begin{cases}
\epsilon^3, \qquad \qquad &d=3,
\\
\epsilon / |\ln \epsilon|, &d=2.
\end{cases}
\end{equation*}
We now conclude that there exist positive constants $c_0, C_0$ depending only on $c_*$ and $k_+$, such that

\begin{equation} \label{M order eps}
\max_{k\in\Gamma} M_{\epsilon, k} \leq C_0
\begin{cases}
\epsilon, \qquad \qquad &d=3,
\\
1 / |\ln \epsilon|, &d=2,
\end{cases}
\qquad \qquad \forall \ \epsilon \leq c_0~.
\end{equation}

Let us further assume $\epsilon<1/k_+$ so that $0<\epsilon k < 1$ for $k\in \Gamma$. By Theorem~\ref{THM u-u^i bounds} there exists a constant $c=c(R)>0$ such that if $k^2 a(k) M_{\epsilon, k} < c$ (and $k$ is in  $\Gamma$) then

\begin{eqnarray} \label{u-u^i interm}
&&\|u-u^i\|_{L^2(B_R \setminus B_1)} \nonumber \\
&& \hskip 5pt \lesssim 
\begin{cases}
\epsilon + k^2 M_{\epsilon, k} \|u^i\|_{L^2(B_R)} \hspace{0.7in} &\text{for} \ d=3~,
\\[0.05in]
\displaystyle \frac{|H_0^{(1)}(k)|}{|H_0^{(1)}(\epsilon k)|} + k^2 M_{\epsilon, k}  \min\{1+|\ln k|, k^{-\frac{1}{4}}\} \left( 1 + \|u^i\|_{L^2(B_R)}\right) &\text{for} \ d=2~.
\end{cases}
\end{eqnarray}

\n Consider first the case $d=3$. If we assume that $\epsilon < c/C_0 k_+^2$, then

\begin{equation}
\label{3dbound}
\max_{k \in \Gamma} k^2 M_{\epsilon, k} \leq k_+^2 C_0 \epsilon < c~, 
\end{equation}

\n and consequently \eqref{u-u^i interm} can be applied for all $k \in \Gamma$. Using the hypothesis \eqref{u^i assumption 2} and \eqref{M order eps} we conclude that for $\epsilon$ small enough

\begin{equation*}
\max_{k \in \Gamma} \|u-u^i\|_{L^2(B_R \setminus B_1)} \lesssim \epsilon \qquad \qquad \text{for} \ d=3~,
\end{equation*}

\n where the implicit constant depends only on $k_+,R,c_*$ and $C_R$.

Let us now consider $d=2$. The function $a(k) = \min\{1+|\ln k|, k^{-\frac{1}{4}}\}$ is decreasing, therefore, assuming that $\epsilon < e^{-C_0 k_+^2 a(k_-)/c}$ we have

\begin{equation}
\label{2dbound}
\max_{k \in \Gamma} k^2 a(k) M_{\epsilon, k} \leq \frac{C_0}{|\ln \epsilon|} k_+^2 a(k_-) < c~. 
\end{equation}

\n Similarly, as before we conclude that, for $k \in \Gamma$,

\begin{equation*}
\|u-u^i\|_{L^2(B_R \setminus B_1)} \lesssim \frac{|H_0^{(1)}(k)|}{|H_0^{(1)}(\epsilon k)|} + \frac{1}{|\ln \epsilon|}~.
\end{equation*}

\n The function $|H_0^{(1)}(t)|$ is decreasing and $H_0^{(1)}(t) \sim \frac{2}{i\pi} |\ln t|$ as $t \to 0$ (cf. \cite{CC14}). Hence we have the following basic estimates: $|H_0^{(1)}(k)| \leq |H_0^{(1)}(k_-)|$ and

\begin{equation*}
|H_0^{(1)}(t)| \gtrsim |\ln t|, \qquad \qquad \forall \ t \in (0,\tfrac{1}{2}).
\end{equation*}

\n These readily imply the inequality

\begin{equation*}
\|u-u^i\|_{L^2(B_R \setminus B_1)} \lesssim \frac{1}{|\ln (\epsilon k)|} + \frac{1}{|\ln \epsilon|}.
\end{equation*}

\n Since by assumption $\epsilon k_+ < 1$ we get

\begin{equation*}
\min_{k\in \Gamma} |\ln (\epsilon k)| = |\ln (\epsilon k_+)| \geq \frac{1}{2} |\ln \epsilon|~,
\end{equation*}

\n where the last inequality holds, provided $\epsilon < 1/k_+^2$. Putting everything together we conclude that for $\epsilon$ sufficiently small

\begin{equation*}
\max_{k\in\Gamma} \|u-u^i\|_{L^2(B_R \setminus B_1)} \lesssim \frac{1}{|\ln \epsilon|}~.
\end{equation*}

\n
The corresponding estimates for the far field pattern readily follow from Lemma~\ref{LEM far field}.

\end{proof}

\subsection{Scattering from a small obstacle: Proof of Lemma~\ref{LEM u^is}} \label{SECT u^is}
In this section we show that Lemma~\ref{LEM u^is} is a direct consequence of the following result  due to Nguyen and Vogelius \cite{NV12-1} (see also \cite{N10}):

\begin{lemma} \label{LEM NV}
Let $D\subset B_1 \subset \RR^d$ be a smooth open subset with $\RR^d \setminus \overline{D}$ connected. Let $f \in H^{\frac{1}{2}}(\partial D)$, $k_0>0$ and $0<k<k_0$. Let $u$ be the outward radiating solution to the problem

\begin{equation*}
\begin{cases}
\Delta u + k^2 u = 0, \qquad \qquad &\text{in} \ \RR^d \setminus \overline{D}~,
\\
u = f  & \text{on} \ \partial D~.
\end{cases}
\end{equation*}

\n Then for any $\beta \geq 1$

\begin{equation} \label{NV H1 bounds}
\|u\|_{H^1(B_\beta \setminus D)} \lesssim
\begin{cases}
\beta^{\frac{1}{2}} \|f\|_{H^{\frac{1}{2}}(\partial D)}~, \qquad &d=3~,
\\
\beta \|f\|_{H^{\frac{1}{2}}(\partial D)}~, & d=2~,
\end{cases}
\end{equation}

\n where the implicit constant depends only on $k_0$ and $D$ but is independent of $\beta$ and $k$. Furthermore, for $R>1, \beta \geq 1$

\begin{equation} \label{NV L2 Hankel}
\|u\|_{L^2(B_{R\beta} \setminus B_{\beta})} \lesssim \beta \frac{|H_0^{(1)}(\beta k)|}{|H_0^{(1)}(k)|}\|f\|_{H^{\frac{1}{2}}(\partial D)}~, 
\qquad \qquad d=2~,
\end{equation}

\n where the implicit constant depends only on $k_0, D$, and  $R$ but is independent of $\beta$ and $k$.

\end{lemma}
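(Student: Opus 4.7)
The plan is to reduce the problem to an exterior Dirichlet problem for a ball and then to exploit separation of variables together with careful uniform asymptotic estimates on Hankel functions. First, classical Fredholm theory for the exterior Dirichlet problem for Helmholtz with the Sommerfeld radiation condition yields unique solvability and, for any fixed $R_0\ge 2$ with $D\subset B_1\subset B_{R_0}$, the $k$-uniform bound
$$\|u\|_{H^1(B_{R_0}\setminus D)}\lesssim \|f\|_{H^{1/2}(\partial D)},\qquad 0<k<k_0,$$
with a constant depending only on $k_0$ and $D$ (cf.\ \cite{CK19}). By the trace theorem this also gives $\|u|_{S_{R_0}}\|_{H^{1/2}(S_{R_0})}\lesssim \|f\|_{H^{1/2}(\partial D)}$. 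Since $\|u\|_{H^1(B_{R_0}\setminus D)}$ is trivially absorbed by the right-hand sides of \eqref{NV H1 bounds}--\eqref{NV L2 Hankel}, it suffices to estimate the radiating exterior problem on $\RR^d\setminus \overline B_{R_0}$ with this $H^{1/2}$ Dirichlet trace.

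Next, outside $B_{R_0}$ one expands $u$ in spherical harmonics ($d=3$) or Fourier modes ($d=2$); the radiating solutions have the form
$$u(r\hat x)=\sum_{n,m}c_{n,m}\,\frac{h_n^{(1)}(kr)}{h_n^{(1)}(kR_0)}\,Y_n^m(\hat x),\quad d=3,\qquad u(r\hat x)=\sum_n c_n\,\frac{H_n^{(1)}(kr)}{H_n^{(1)}(kR_0)}\,e^{in\theta},\quad d=2,$$
where the coefficients are determined by the trace on $S_{R_0}$. Parseval on $S_{R_0}$ plus trace estimates convert $\|u|_{S_{R_0}}\|_{H^{1/2}}^2$ into a weighted sum $\sum(1+n)|c_{n,m}|^2$. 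Integrating $|u|^2+|\nabla u|^2$ over $B_\beta\setminus B_{R_0}$ (and over $B_{R\beta}\setminus B_\beta$ for the second assertion) decouples the modes and reduces everything to radial integrals of the form
$$I_n(\beta,k)=\int_{R_0}^\beta \left|\frac{h_n^{(1)}(kr)}{h_n^{(1)}(kR_0)}\right|^2 r^{d-1}\,dr,$$
together with the analogous integrals for the radial derivative, in which the Wronskian identity $W(J_n,Y_n)$ is used to bound $(h_n^{(1)})'$ in terms of $h_n^{(1)}$.

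The main obstacle is the $k$- and $n$-uniform control of the Hankel ratios $|h_n^{(1)}(kr)/h_n^{(1)}(kR_0)|$ over the full range $r\in[R_0,\beta]$. One splits into two regimes. In the non-oscillatory regime $kr\lesssim n$ the Hankel function is dominated by its leading algebraic singularity $\sim (kr)^{-n-1}$ in $3D$ (respectively $\sim(kr)^{-n}$ for $d=2$, $n\ge 1$; the $n=0$, $d=2$ case carries an additional $\log$), so the ratio decays like $(r/R_0)^{-n-1}$ and $I_n$ is bounded uniformly in $\beta$ and geometrically small in $n$. In the oscillatory regime $kr\gtrsim n$ one has $|h_n^{(1)}(kr)|^2\sim (kr)^{-2}$ in $3D$ and $|H_n^{(1)}(kr)|^2\sim (kr)^{-1}$ in $2D$, so $|h_n^{(1)}(kr)|^2 r^{d-1}$ is essentially $k^{-2}$ in $3D$ (giving $I_n\lesssim \beta$) and essentially $r/k$ in $2D$ (giving $I_n\lesssim \beta^2/k$). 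Comparison against the normalizing denominator $h_n^{(1)}(kR_0)$, combined with summation against the weighted $\ell^2$ norm of the $c_{n,m}$, produces the advertised growth rates $\beta^{1/2}$ in $3D$ and $\beta$ in $2D$; the additional factor $\beta^{1/2}$ in $2D$ reflects the slower $(kr)^{-1/2}$ pointwise decay of $H_n^{(1)}$. The same analysis restricted to the outer interval $[\beta,R\beta]$, tracked carefully for the low modes, shows that the dominant contribution in $2D$ comes from $n=0$, producing the explicit prefactor $|H_0^{(1)}(\beta k)|/|H_0^{(1)}(k)|$ in \eqref{NV L2 Hankel}; this is the sharpest form, since uniformity in small $k$ requires precisely the logarithmic blow-up of $H_0^{(1)}$.
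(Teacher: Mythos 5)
The paper does not prove this lemma; the remark immediately following it delegates entirely to Lemmas 3 and 4 and Remark 4 of \cite{NV12-1}, so there is no in-paper argument to compare against. Your sketch is an independent reconstruction, and the overall strategy --- pass to the exterior Dirichlet problem on $\RR^d\setminus\overline{B}_{R_0}$, expand in spherical harmonics or Fourier modes, and control the Hankel-function ratios uniformly in $n$, $k$, and $\beta$ --- is indeed the mechanism behind \cite{NV12-1}.

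There is, however, a genuine gap in the opening step. You claim that ``classical Fredholm theory'' already yields the $k$-uniform bound $\|u\|_{H^1(B_{R_0}\setminus D)}\lesssim\|f\|_{H^{1/2}(\partial D)}$ for all $0<k<k_0$. Fredholm/Rellich arguments give unique solvability and an $H^1$ estimate for each fixed $k>0$, and compactness gives uniformity on any $[k_1,k_0]$ with $k_1>0$; they do not, by themselves, control the constant as $k\to 0^+$. This low-frequency uniformity is precisely one of the things \cite{NV12-1} must establish, and in $d=2$ it is genuinely delicate because the exterior Laplace Dirichlet problem admits logarithmic growth at infinity and $H_0^{(1)}(kR_0)\to\infty$ as $k\to 0$. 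You cannot offload this onto ``classical'' theory and then use it to absorb the near-field contribution. Beyond that, your mode-by-mode bookkeeping has slips that happen not to affect the advertised rates but would need repair in a complete proof: in $d=2$ in the oscillatory regime $|H_n^{(1)}(kr)|^2 r^{d-1}\sim (kr)^{-1}r=k^{-1}$, a constant in $r$, so $I_n\lesssim \beta/k$ rather than $\beta^2/k$; the dominant $\beta^2$ contribution driving the $\beta$ rate in \eqref{NV H1 bounds} comes from the $n=0$ mode in the non-oscillatory range (which you do flag at the end), not from the generic $(kr)^{-1/2}$ decay of $H_n^{(1)}$; and a careful argument must track, for each $n$, the $n$-dependent turning point between $kr\lesssim n$ and $kr\gtrsim n$ and make the matching uniform in $n$, which is where most of the technical work in \cite{NV12-1} actually lies.
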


\begin{remark}
\normalfont
The estimate \eqref{NV H1 bounds} for the $L^2$-norm of $u$ and \eqref{NV L2 Hankel}, in the case $R=2$, is proven in Lemma 3 of \cite{NV12-1} under the assumption that $k_0$ is sufficiently small (see also the beginning of the proof of Lemma 4). The subsequent Remark 4 of \cite{NV12-1} explains that these estimates hold without any smallness assumption on $k_0$. The extension of \eqref{NV L2 Hankel} to any $R>1$ is immediate. Finally, the extension from an $L^2$ estimate of $u$ to an $H^1$ estimate, as in \eqref{NV H1 bounds}, is guaranteed by Lemma 4 of \cite{NV12-1}.
\end{remark}

\n As a straightforward consequence of Lemma \ref{LEM NV} (with $D=B_\frac12 \subset B_1$) we obtain the following corollary.
\begin{corollary} (Scattering from a small ball) \mbox{} \label{CORO small ball}

\n Let $\epsilon < \frac{1}{2}$, $R>1, k_0>0$ with $0<2\epsilon k <k_0$. Let $f \in H^{\frac{1}{2}}(S_\epsilon)$ and $u$ be the outward radiating solution of the problem

\begin{equation*}
\begin{cases}
\Delta u + k^2 u = 0, \qquad \qquad &\text{in} \ \RR^d \setminus \overline{B}_\epsilon
\\
u = f & \text{on} \ S_\epsilon
\end{cases}
\end{equation*}

\n Let $R>1$ and set $f_\epsilon = f(2\epsilon \cdot)$, then 

\begin{equation} \label{u small ball}
\|u\|_{L^2(B_R\setminus B_1)} \lesssim \|f_\epsilon\|_{H^\frac{1}{2}(S_\frac12)}
\begin{cases}
\epsilon~, \hspace{0.3in} & d=3~,
\\[0.1in]
\dfrac{|H_0^{(1)}(k)|}{|H_0^{(1)}(\epsilon k)|}~, & d=2~,
\end{cases}
\end{equation}

\n where the implicit constant depends only on $R$ and $k_0$.

\end{corollary}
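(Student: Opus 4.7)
The plan is to reduce the corollary to Lemma~\ref{LEM NV} by rescaling the small obstacle $B_\epsilon$ to the fixed domain $B_{1/2}\subset B_1$, where Lemma~\ref{LEM NV} applies. Specifically, I would define
\[
\tilde u(y) := u(2\epsilon y), \qquad y\in\RR^d\setminus \overline{B}_{1/2},
\]
and check that $\tilde u$ is the outward radiating solution of $\Delta \tilde u + \tilde k^2\tilde u=0$ in $\RR^d\setminus\overline{B}_{1/2}$ with boundary data $\tilde u=f_\epsilon$ on $S_{1/2}$, where $\tilde k := 2\epsilon k \in (0,k_0)$ by hypothesis. The rescaling preserves the Sommerfeld radiation condition, so Lemma~\ref{LEM NV} (applied with $D=B_{1/2}$, which satisfies the required hypotheses) is available with the wavenumber $\tilde k$ and the boundary datum $f_\epsilon$.

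Next I would change variables back. Since $y=x/(2\epsilon)$,
\[
\|u\|_{L^2(B_R\setminus B_1)} \;=\; (2\epsilon)^{d/2}\,\|\tilde u\|_{L^2(B_{R/(2\epsilon)}\setminus B_{1/(2\epsilon)})}.
\]
For $d=3$, I would bound the right-hand side by $(2\epsilon)^{3/2}\|\tilde u\|_{H^1(B_{\beta}\setminus B_{1/2})}$ with $\beta=R/(2\epsilon)\geq 1$ (valid because $\epsilon\leq 1/2<R/2$) and then invoke the $H^1$-estimate \eqref{NV H1 bounds}. This yields
\[
\|u\|_{L^2(B_R\setminus B_1)} \;\lesssim\; (2\epsilon)^{3/2}\,\beta^{1/2}\,\|f_\epsilon\|_{H^{1/2}(S_{1/2})} \;\lesssim\; R^{1/2}\,\epsilon\,\|f_\epsilon\|_{H^{1/2}(S_{1/2})}~,
\]
which is the desired bound for $d=3$.

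For $d=2$, I would apply the Hankel-function estimate \eqref{NV L2 Hankel} with $\beta=1/(2\epsilon)\geq 1$ and the same ``$R$'' so that $R\beta=R/(2\epsilon)$. This gives
\[
\|\tilde u\|_{L^2(B_{R\beta}\setminus B_\beta)} \;\lesssim\; \frac{1}{2\epsilon}\cdot\frac{|H_0^{(1)}(\beta \tilde k)|}{|H_0^{(1)}(\tilde k)|}\,\|f_\epsilon\|_{H^{1/2}(S_{1/2})}
\;=\;\frac{1}{2\epsilon}\cdot\frac{|H_0^{(1)}(k)|}{|H_0^{(1)}(2\epsilon k)|}\,\|f_\epsilon\|_{H^{1/2}(S_{1/2})}~,
\]
and scaling back cancels the $1/(2\epsilon)$ factor against the $(2\epsilon)^{d/2}=2\epsilon$ factor. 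The only bookkeeping step that requires attention, and which I anticipate to be the main nuisance (though not a serious obstacle), is to replace $|H_0^{(1)}(2\epsilon k)|$ by $|H_0^{(1)}(\epsilon k)|$ in the denominator: since $0<\epsilon k<k_0/2$, the asymptotics $H_0^{(1)}(t)\sim \tfrac{2}{i\pi}\ln t$ as $t\to 0^+$ and the monotonicity/continuity of $|H_0^{(1)}|$ on bounded intervals imply $|H_0^{(1)}(\epsilon k)| \lesssim |H_0^{(1)}(2\epsilon k)|$ with an implicit constant depending only on $k_0$. Putting everything together yields \eqref{u small ball} with implicit constants depending only on $R$ and $k_0$.
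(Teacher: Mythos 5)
Your proof is correct and follows essentially the same route as the paper: rescale $u$ by $2\epsilon$ to a solution on $\RR^d\setminus\overline{B}_{1/2}$, apply Lemma~\ref{LEM NV} with the rescaled wavenumber $\tilde k = 2\epsilon k$ (using \eqref{NV H1 bounds} for $d=3$ and \eqref{NV L2 Hankel} for $d=2$), scale back, and replace $|H_0^{(1)}(2\epsilon k)|$ by $|H_0^{(1)}(\epsilon k)|$ via the bounded ratio of Hankel functions. The only cosmetic difference is that the paper notes the ratio bound $|H_0^{(1)}(\epsilon k)|\lesssim |H_0^{(1)}(2\epsilon k)|$ holds with an absolute constant (since $H_0^{(1)}(\cdot)$ and $H_0^{(1)}(2\cdot)$ have the same asymptotics at both $0$ and $\infty$ and $H_0^{(1)}$ has no real zeros), whereas you only claim a $k_0$-dependent constant, which is a harmless overcounting of dependence.
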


\begin{remark} \mbox{}
\normalfont

\begin{enumerate}
\item[(i)] The proof of this corollary can be modified in a straightforward way (using only \eqref{NV H1 bounds} of Lemma~\ref{LEM NV}) to yield the following bounds up to the inner boundary $S_\epsilon$:

\begin{equation} \label{u small ball upto S_eps bounds}
\|u\|_{L^2(B_R\setminus B_\epsilon)} \lesssim \epsilon^{d-2} \|f_\epsilon\|_{H^\frac{1}{2}(S_\frac12)}~,
\qquad 
\|\nabla u\|_{L^2(B_R\setminus B_\epsilon)} \lesssim \epsilon^{d-3} (1+k) \|f_\epsilon\|_{H^\frac{1}{2}(S_\frac12)}~,
\end{equation}

\n where again the implicit constants depend only on $R$ and $k_0$. These estimates for $d=3$ are as good as the bound in \eqref{u small ball}, in terms of being of the same order $\epsilon$. However, for $d=2$ the smallness in $\epsilon$ is lost.

\item[(ii)] For scattering estimates in other frequency regimes (e.g. the high frequency case) we refer to \cite{NV12-1}, and also to \cite{HPV07} concerning asymptotically precise estimates for a small circular inhomogeneity and $d=2$.
\end{enumerate}
\end{remark}

\begin{proof}
Let $u_\epsilon(y) = u(2\epsilon y)$, then $u_\epsilon$ is the radiating solution of the problem

\begin{equation*}
\begin{cases}
\Delta u_\epsilon + (2\epsilon)^2 k^2 u_\epsilon = 0, \qquad \qquad &\text{in} \ \RR^d \setminus \overline{B}_\frac12
\\
u_\epsilon = f_\epsilon  & \text{on} \ S_\frac12
\end{cases}
\end{equation*}

\n Let us start with the case $d=2$. By scaling the norm and using the estimate \eqref{NV L2 Hankel} of Lemma~\ref{LEM NV} we obtain

\begin{equation} \label{u small ball d=2 in proof}
\|u\|_{L^2(B_R \setminus B_{1})} = \epsilon \|u_\epsilon\|_{L^2(B_{\frac{R}{2\epsilon}} \setminus B_{\frac{1}{2\epsilon}}) } \lesssim \frac{|H_0^{(1)}(k)|}{|H_0^{(1)}(2\epsilon k)|} \|f_\epsilon\|_{H^\frac{1}{2}(S_\frac12)}. 
\end{equation}

\n It remains to use the estimate

\begin{equation*}
|H_0^{(1)}(\epsilon k)| \lesssim |H_0^{(1)}(2\epsilon k)|,
\end{equation*}

\n which holds true with an absolute implicit constant as the function $H_0^{(1)}$ has no real zeros, and as the functions $H_0^{(1)}(\cdot)$ and $H_0^{(1)}(2\cdot)$ have the same asymptotics at $0$ and at $\infty$.

\n In the case $d=3$ the argument works analogously, giving the bound

\begin{equation*}
\|u\|_{L^2(B_R\setminus B_1)} \lesssim \epsilon \|f_\epsilon\|_{H^\frac{1}{2}(S_\frac12)}.
\end{equation*}
\end{proof}

\n To conclude the proof of Lemma~\ref{LEM u^is} we apply the above corollary and the first estimate in \eqref{u small ball upto S_eps bounds} to the function $u^{is}$. That way we obtain the desired estimates of Lemma~\ref{LEM u^is}, but with the additional factor

$$\|f_\epsilon\|_{H^{\frac{1}{2}}(S_\frac12)} = \|u^i(2\epsilon \cdot)\|_{H^{\frac{1}{2}}(S_\frac12)}$$

\n on the right-hand sides of the inequalities. It thus remains to prove that the above quantity is bounded by a constant depending only on $k_0$. To this end, the standard trace estimate and a rescaling of the norms give

\begin{equation*}
\begin{split}
\|u^i(2\epsilon \cdot)\|_{H^{\frac{1}{2}}(S_\frac12)} &\lesssim \|u^i(2\epsilon \cdot)\|_{H^1(B_\frac12)} = \|u^i(2\epsilon \cdot)\|_{L^2(B_\frac12)} + 2\epsilon \|\nabla u^i(2\epsilon \cdot)\|_{L^2(B_\frac12)} 
\\
&= (2\epsilon)^{-\frac{d}{2}} \|u^i\|_{L^2(B_\epsilon)} + (2\epsilon)^{1-\frac{d}{2}} \|\nabla u^i\|_{L^2(B_\epsilon)} \lesssim \|u^i\|_{L^\infty(B_\epsilon)} + \epsilon \|\nabla u^i\|_{L^\infty(B_\epsilon)} \lesssim 1 ~.
\end{split}
\end{equation*}

\n For the last inequality we used the assumption \eqref{u^i assumption 1}.

\subsection{Bounds for the operator $T$: Proof of Lemma~\ref{LEM T bounds}} \label{SECT T}

Let us split the operator $T$ into two parts: $T = T_1 + T_2$, where

\begin{equation} \label{T1}
T_1u(x) = k^2 \int_{B_2 \setminus \overline{B}_\epsilon} (q(y,k)-1) u(y) \Phi_k(x,y) dy~,
\end{equation}

\n and 

\begin{equation} \label{T2}
T_2u(x) = k^2 \int_{B_2 \setminus \overline{B}_\epsilon} (q(y,k)-1) u(y) \Psi_k(x,y) dy~,
\end{equation}
with $\Psi_k$ given by \eqref{Psi}.  Thus,  to bound $Tu$ on $L^2(B_R \setminus B_\epsilon)$, it suffices to bound $T_1u$ and $T_2u$. We start by deriving some estimates for the fundamental solution, $\Phi_k$, in Lemma~\ref{LEM Phi bounds} below. These are then used in Lemma~\ref{LEM T1 bounds} to obtain bounds for $T_1u$. To bound $T_2u$ we need $L^2(B_\epsilon)$-norm bounds for $T_1u$ and $\nabla T_1u$, with explicit dependence on the small parameter $\epsilon$. Parts $(ii)$ and $(iv)$ of Lemma~\ref{LEM T1 bounds} serve that purpose, and this is where the estimates on the derivatives of the fundamental solution from part $(ii)$ of Lemma~\ref{LEM Phi bounds} will be used. The bound for $T_2u$ is given in Lemma~\ref{LEM T2 bounds}. Finally, Lemma~\ref{LEM T bounds} is a direct consequence of Lemmas~\ref{LEM T1 bounds} and \ref{LEM T2 bounds}. 

\begin{lemma} \label{LEM Phi bounds}

Let $\Phi_k$ be given by \eqref{fund sol}.

\begin{itemize}
\item[(i)] Let $R, r>0$. With implicit constants depending only on $R$ and $r$,

\begin{equation*}
\sup_{x \in B_R} \int_{B_r} |\Phi_k(x,y)|^2 dy \lesssim
\begin{cases}
1~, \hspace{1.3in} & d=3~,
\\
\min\{1+\ln^2 k~, k^{-1}\}, & d=2~.
\end{cases}
\end{equation*}

\item[(ii)] Let $R,r>0$. With an absolute implicit constant (i.e. independent of all the involved parameter $R, r$ and $k$)

\begin{equation*}
\sup_{x \in B_R} \int_{B_r} |\nabla_x \Phi_k(x,y)| dy \lesssim r \left(1+(rk)^{\frac{d-1}{2}}\right).
\end{equation*}

\end{itemize}
\end{lemma}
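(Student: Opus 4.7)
The approach is to use the radial structure of $\Phi_k$ to reduce both statements to explicit one-dimensional integrals and then invoke the standard small- and large-argument asymptotics of the Hankel functions.

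For part $(i)$, the integrand $|\Phi_k(x,y)|^2$ is a non-increasing function of $|x-y|$, so by the Riesz rearrangement inequality the convolution $\int_{B_r}|\Phi_k(x,y)|^2\,dy$ is itself symmetric non-increasing in $x$ and therefore attains its maximum at $x=0$, reducing matters to bounding $\int_{B_r}|\Phi_k(0,y)|^2\,dy$. In $d=3$ the integrand is $1/(16\pi^2|y|^2)$ and polar integration yields a bound proportional to $r$, independent of $k$. In $d=2$, after polar coordinates and the substitution $u=k|y|$, the integral becomes a constant times $k^{-2}\int_0^{kr}|H_0^{(1)}(u)|^2\,u\,du$. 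Applying $|H_0^{(1)}(u)|^2\lesssim 1+\ln^2 u$ on $(0,1]$ and $|H_0^{(1)}(u)|^2\lesssim 1/u$ on $[1,\infty)$ and splitting the integral at $u=1$ produces the bound $\lesssim 1+\ln^2 k$ when $kr\lesssim 1$ and the bound $\lesssim 1/k$ when $kr\gtrsim 1$; both are valid for every $k>0$, so their minimum yields the stated estimate.

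Part $(ii)$ follows by the same scheme, the subtlety being that the constant must be independent of $R$. The rearrangement argument (applied, if necessary, to a non-increasing radial majorant of $|\nabla_x\Phi_k|$) again reduces the supremum over $x\in B_R$ to $\int_{B_r}|\nabla_z\Phi_k(0,z)|\,dz$, at which point no $R$ appears. In $d=3$ direct differentiation gives $|\nabla_z\Phi_k(0,z)|=\sqrt{k^2|z|^2+1}/(4\pi|z|^2)\lesssim |z|^{-2}+k|z|^{-1}$, and polar integration over $B_r$ produces $r+kr^2=r(1+rk)$. In $d=2$ one has $\nabla_z\Phi_k(0,z)=-\tfrac{ik}{4}H_1^{(1)}(k|z|)z/|z|$, so $|\nabla_z\Phi_k|\lesssim k|H_1^{(1)}(k|z|)|$; using $|H_1^{(1)}(u)|\lesssim 1/u$ for $u\leq 1$ and $|H_1^{(1)}(u)|\lesssim 1/\sqrt{u}$ for $u\geq 1$ together with the substitution $u=k|z|$ gives $\int_{B_r}k|H_1^{(1)}(k|z|)|\,dz\lesssim k^{-1}(1+(kr)^{3/2})\lesssim r(1+(kr)^{1/2})$, as claimed.

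The only real obstacle is the $R$-independence required in part $(ii)$, which rules out the naive triangle-inequality bound $B_r\subset B_{R+r}(x)$; the rearrangement/monotonicity reduction to $x=0$ is what resolves this point. Everything else is a routine computation with the Hankel asymptotics, including the matching of the two regimes in part $(i)$ via the minimum.
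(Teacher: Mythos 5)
Your proof is correct, and it takes a genuinely different route from the paper's. The paper bounds $\int_{B_r}|\Phi_k(x,y)|^2\,dy$ (and likewise $\int_{B_r}|\nabla_x\Phi_k|\,dy$) uniformly in $x$ by a direct case analysis: in $d=3$ it splits according to whether $x\in B_{2r}$ (enlarge the domain to $B_{3r}(x)$) or $x\notin B_{2r}$ (the integrand is then bounded by $r^{-2}$), and in $d=2$ it splits the domain of integration into $B_r\cap B_{1/(2k)}(x)$ and its complement and applies the small- and large-argument Hankel asymptotics on the two pieces. You instead invoke the fact that the convolution of $\chi_{B_r}$ with a symmetric-decreasing profile is itself symmetric-decreasing to reduce the $\sup$ over $x\in B_R$ to the single evaluation at $x=0$, after which everything is a one-dimensional polar integral against the same Hankel asymptotics. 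This reduction is what both the paper and you need in order to make the constant in part (ii) $R$-independent; the paper achieves it by the $x\in B_{2r}$ vs.\ $x\notin B_{2r}$ dichotomy, while your version makes the $R$-independence (and, as a side benefit, the $R$-independence in part (i) as well) structurally transparent, at the price of invoking the rearrangement/monotonicity-of-convolution fact and the monotonicity of $|H_0^{(1)}|$ and $|H_1^{(1)}|$ (the latter handled by your ``non-increasing radial majorant'' remark, which is the right fix, since monotonicity of $|H_1^{(1)}|$ is not entirely obvious). One small blemish: in the $d=2$, part (ii) chain, the intermediate bound $\lesssim k^{-1}(1+(kr)^{3/2})\lesssim r(1+(kr)^{1/2})$ as a two-step global inequality is not literally valid for $kr<1$ (the second inequality requires $k^{-1}\lesssim r$); you should either present the two regimes $kr\le 1$ and $kr>1$ separately, as the rest of your argument does, or use the single global majorant $u|H_1^{(1)}(u)|\lesssim 1+\sqrt{u}$, which integrates directly to $r(1+\tfrac{2}{3}(kr)^{1/2})$ with no case split at all.
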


\begin{proof} \mbox{}
\n
$\bm{\textbf{The case} \ d=3}:$ Let us start by showing that for any $x \in \RR^3$ with implicit constants independent of $x$ and $r$,

\begin{equation} \label{1/|x-y| bounds 3d}
\int_{B_r} \frac{dy}{|x-y|^2}  \lesssim r~,
\qquad \qquad
\int_{B_r} \frac{dy}{|x-y|}  \lesssim r^2~.
\end{equation}

\n We prove only the first inequality, the second follows analogously. Assume first that $x \in B_{2r}$, then $B_r \subset B_{3r}(x)$ and hence

\begin{equation*}
\int_{B_r} \frac{dy}{|x-y|^2}  \leq \int_{B_{3r}(x)} \frac{dy}{|x-y|^2}  = \int_{B_{3r}} \frac{dz}{|z|^2}  = 4\pi \int_0^{3r} d\rho =12\pi r~.
\end{equation*}

\n If now $x \notin B_{2r}$ we use that $|y-x|\geq |x|-|y| \geq 2r-r = r$ for any $y \in B_r$, so that

\begin{equation*}
\int_{B_r} \frac{dy}{|x-y|^2}  \leq \frac{1}{r^2} |B_r| \lesssim r~.
\end{equation*}

\n Consequently, we immediately obtain

\begin{equation*}
\int_{B_r} |\Phi_k(x,y)|^2 dy \lesssim \int_{B_r} \frac{dy}{|x-y|^2}  \lesssim r~.
\end{equation*}

\n This concludes the proof of part $(i)$. Let us turn to gradient bounds. Direct calculation shows that

\begin{equation*}
\nabla_x \Phi_k(x,y) = \tfrac{1}{4\pi} e^{ik|x-y|} \left( ik|x-y|-1 \right) \frac{x-y}{|x-y|^3}~,
\end{equation*}

\n and hence

\begin{equation} \label{grad Phi bound 3d}
|\nabla_x \Phi_k(x,y)| = \frac{\sqrt{1+k^2 |x-y|^2}}{4\pi |x-y|^2} \leq \frac{1}{4\pi |x-y|^2} + \frac{k}{4\pi |x-y|}~.
\end{equation}

\n From \eqref{1/|x-y| bounds 3d} we conclude that

\begin{equation*}
\int_{B_r} |\nabla_x \Phi_k(x,y)| dy \lesssim r + r^2 k~.
\end{equation*}

$\bm{\textbf{The case} \ d=2}: $ Analogously to \eqref{1/|x-y| bounds 3d}, for any $x \in \RR^2$ with implicit constants independent of $x$ and $r$,

\begin{equation} \label{1/|x-y| bounds 2d}
\int_{B_r} \frac{dy}{|x-y|}  \lesssim r,
\qquad \qquad
\int_{B_r} \frac{dy}{\sqrt{|x-y|}}  \lesssim r^\frac{3}{2}.
\end{equation}

\n We use the asymptotic relations \cite{CC14}

\begin{equation*}
H_0^{(1)}(t) \sim \frac{2}{i\pi} |\ln t|~, \quad \text{as} \ t \to 0~,
\qquad \text{and} \qquad
H_0^{(1)}(t) \sim \sqrt{\frac{2 \pi}{t}} e^{i(t-\frac{\pi}{4})}~, \quad \text{as} \ t \to \infty~,
\end{equation*}

\n to obtain the bound

\begin{equation*}
|H_0^{(1)}(t)| \lesssim |\ln t| \chi_{(0,\frac12)}(t) + \frac{1}{\sqrt{t}} \chi_{(\frac{1}{2},\infty)}(t)~, \qquad \qquad \forall t \geq 0~,
\end{equation*}

\n which then implies

\begin{equation*}
\begin{split}
\int_{B_r} |\Phi_k(x,y)|^2 dy &\lesssim \int_{B_r} \left[\ln^2 (k|x-y|) \chi_{(0,\frac12)}(k|x-y|) + \frac{1}{k|x-y|} \chi_{(\frac{1}{2},\infty)}(k|x-y|)\right] dy 
\\[.05in]
&= \int_{B_r \cap B_{\frac{1}{2k}}(x)} \ln^2 (k|x-y|) dy + \int_{B_r\cap B^C_{\frac{1}{2k}}(x)} \frac{1}{k|x-y|} dy =:I_1 + I_2~.
\end{split}
\end{equation*}

\n Let us start by bounding $I_2$. Using that $|x-y|>\frac{1}{2k}$ it is clear that $I_2 \lesssim 1$ with implicit constant depending only on $r$. This bound can be improved when $k$ is large. Indeed, to get a better bound in that case, observe that

\begin{equation*}
\sup_{x \in B_R} I_2 \leq \frac{1}{k} \sup_{x \in B_R} \int_{B_r} \frac{1}{|x-y|} dy \lesssim \frac{1}{k}~,
\end{equation*}

\n where the last inequality follows from \eqref{1/|x-y| bounds 2d}. Combining, the two estimates, we have (with an implicit constant depending only on $r$)

\begin{equation*}
\sup_{x \in B_R} I_2 \lesssim \min\{1, k^{-1}\}~.
\end{equation*}

\n Let us turn to bounding $I_1$. Dropping $B_r$ from the integration and changing the variables $z=y-x$ inside the integral, we get

\begin{equation}
\label{I-1interm1}
I_1 \leq \int_{B_{\frac{1}{2k}}} \ln^2 (k|z|) dz = \frac{1}{k^2} \int_{B_\frac12} \ln^2 (|z|) dz \lesssim \frac{1}{k^2}~,
\end{equation}

\n where in the last step we used that $\ln^2|z|$ has an integrable singularity at $z=0$. This bound can be improved when $k$ is small. Dropping $B_{\frac{1}{2k}}(x)$ from the integral $I_1$ and using the inequality

\begin{equation*}
\ln^2 (k|x-y|) \lesssim \ln^2 k + \ln^2 |x-y|,
\end{equation*}

\n we arrive at the estimate

\begin{equation}
\label{I-1interm2}
\sup_{x\in B_R} I_1 \lesssim \ln^2 k + \sup_{x \in B_R} \int_{B_r} \ln^2 |x-y| dy \lesssim \ln^2 k + 1~.
\end{equation}

\n The last inequality is easily established, based on the estimate

\begin{equation*}
\ln^2 t \lesssim \frac{1}{t} \chi_{(0,1)}(t) + t \chi_{1,\infty)}(t), \qquad \qquad \forall t \geq 0~.
\end{equation*}

\n Indeed,

\begin{equation*}
\begin{split}
\sup_{x \in B_R} \int_{B_r} \ln^2 |x-y| dy &\lesssim \sup_{x \in B_R} \int_{B_r \cap B_1(x)} \frac{1}{|x-y|} dy + \sup_{x \in B_R} \int_{B_r \cap B_{1}^C(x)} |x-y| dy 
\\
&\leq  \int_{B_1} \frac{1}{|z|} dz + (r+R) |B_r| \lesssim 1~.
\end{split}
\end{equation*}

\n Combining the two estimates \eqref{I-1interm1} and \eqref{I-1interm2}, we arrive at 

\begin{equation*}
\sup_{x\in B_R} I_1 \lesssim \min\{1 + \ln^2 k, k^{-2} \}~.
\end{equation*}

\n Finally, a combination of the bounds for $I_1$ and $I_2$ yields that

\begin{equation*}
\sup_{x\in B_R} \int_{B_r} |\Phi_k(x,y)|^2 dy \lesssim \min\{1 + \ln^2 k, k^{-2} \} + \min\{1,k^{-1}\} \lesssim \min\{1+\ln^2 k, k^{-1}\}~.
\end{equation*}

\n For gradient bounds in 2d we use the asymptotic relations

\begin{equation*}
{H_0^{(1)}}'(t) \sim -\frac{2}{i\pi t}~, \quad \text{as} \ t \to 0~,
\qquad \text{and} \qquad
{H_0^{(1)}}'(t) \sim i \sqrt{\frac{2 \pi}{t}} e^{i(t-\frac{\pi}{4})}~, \quad \text{as} \ t \to \infty~,
\end{equation*}

\n along with the bound

\begin{equation*}
|{H_0^{(1)}}'(t)| \lesssim \frac{1}{t} \chi_{(0,1)}(t) + \frac{1}{\sqrt{t}} \chi_{(1,\infty)}(t)~, \qquad \qquad \forall t \geq 0~.
\end{equation*}

\n Since

\begin{equation*}
\nabla_x \Phi_k(x,y) = \frac{i k}{4} {H_0^{(1)}}'(k|x-y|) \frac{x-y}{|x-y|},  
\end{equation*}

\n we obtain, with the help of \eqref{1/|x-y| bounds 2d}, that 

\begin{equation*}
\begin{split}
\int_{B_r} |\nabla_x \Phi_k(x,y)| dy &\lesssim \int_{B_r \cap B_{\frac{1}{k}}(x)} \frac{dy}{|x-y|} + \sqrt{k} \int_{B_r \cap B^C_{\frac{1}{k}}(x)} \frac{dy}{\sqrt{|x-y|}}  
\\
&\lesssim \min\{r,k^{-1}\} + \sqrt{k} r^{\frac{3}{2}} 
\\[.05in]
&= r \left[  \min\{1,(rk)^{-1}\} + \sqrt{rk} \right] \leq r \left(1+\sqrt{rk} \right)~,
\end{split}
\end{equation*}
with an implicit constant independent of $r$ and $x$.
\end{proof}

\begin{lemma} \label{LEM T1 bounds}
Let $T_1$ be defined by \eqref{T1}, $R>1$, \ $\epsilon<1$ and $M_{\epsilon, k}$ be given by \eqref{M}. Then for any $u \in L^2(B_2 \setminus B_\epsilon)$, 

\begin{itemize}
\item[(i)] $\displaystyle \|T_1u\|_{L^2(B_R \setminus B_\epsilon)} \lesssim k^2 M_{\epsilon, k} \|u\|_{L^2(B_2 \setminus B_\epsilon)} 
\begin{cases}
1~, \hspace{1.3in} & d=3~,
\\
\min\{1+|\ln k|, k^{-\frac{1}{2}}\}~, & d=2~.
\end{cases}$

\item[(ii)] $\displaystyle \|T_1u\|_{L^2(B_\epsilon)} \lesssim \epsilon^{\frac{d}{2}} k^2 M_{\epsilon, k} \|u\|_{L^2(B_2 \setminus B_\epsilon)}
\begin{cases}
1~, \hspace{1.3in} & d=3~,
\\
\min\{1+|\ln k|, k^{-\frac{1}{2}}\}~, & d=2~.
\end{cases}$

\item[(iii)] $\displaystyle \|\nabla T_1u\|_{L^2(B_R \setminus B_\epsilon)} \lesssim k^2 (1+k^{\frac{d-1}{2}}) M_{\epsilon, k} \|u\|_{L^2(B_2 \setminus B_\epsilon)}$

\item[(iv)] $\displaystyle \|\nabla T_1u\|_{L^2(B_\epsilon)} \lesssim \sqrt{\epsilon} k^2 (1+k^{\frac{d-1}{4}}) (1+(\epsilon k)^{\frac{d-1}{4}}) M_{\epsilon, k} \|u\|_{L^2(B_2 \setminus B_\epsilon)}$

\end{itemize}

\n where all the implicit constants are independent of $u, \epsilon$ and $k$. The implicit constants in $(i)$ and $(iii)$ depend only on $R$; and those in $(ii)$ and $(iv)$ are absolute constants.
\end{lemma}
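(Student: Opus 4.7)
The plan is to reduce every bound to a kernel estimate for $\Phi_k$ (or $\nabla_x\Phi_k$) supplied by Lemma~\ref{LEM Phi bounds}, after extracting the pointwise bound $|q(y,k)-1|\le M_{\epsilon,k}$. This bound is immediate from \eqref{q} and the definition \eqref{M}, since $q(y,k)-1=\sigma_\epsilon(k)|\det DF(y)|$ in $B_2\setminus\overline B_\epsilon$.

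For parts $(i)$ and $(ii)$ I apply Cauchy--Schwarz in $y$ to obtain
$$
|T_1 u(x)|^2 \le k^4 M_{\epsilon,k}^2 \,\|u\|_{L^2(B_2\setminus B_\epsilon)}^2 \int_{B_2}|\Phi_k(x,y)|^2\,dy,
$$
and then integrate over the target domain $\Omega$, taking $\Omega=B_R\setminus B_\epsilon$ for $(i)$ and $\Omega=B_\epsilon$ for $(ii)$. Applying Lemma~\ref{LEM Phi bounds}$(i)$ to the inner integral, together with $|\Omega|^{1/2}\lesssim 1$ (for $(i)$) or $|B_\epsilon|^{1/2}\lesssim\epsilon^{d/2}$ (for $(ii)$), yields the two stated estimates; the extra $\epsilon^{d/2}$ factor in $(ii)$ comes entirely from $|B_\epsilon|^{1/2}$.

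For parts $(iii)$ and $(iv)$ I differentiate under the integral to write $\nabla T_1 u(x)=\int K(x,y)u(y)\,dy$ with $K(x,y)=k^2(q(y,k)-1)\nabla_x\Phi_k(x,y)$, and apply Schur's test on this kernel as an operator from $L^2(B_2\setminus B_\epsilon)$ to $L^2(\Omega)$. One Schur integral is controlled directly by Lemma~\ref{LEM Phi bounds}$(ii)$ with $r=2$, giving $\sup_x\int_{B_2}|\nabla_x\Phi_k(x,y)|\,dy\lesssim 1+k^{(d-1)/2}$. The other Schur integral is $\sup_y\int_\Omega|\nabla_x\Phi_k(x,y)|\,dx$: since $|\nabla_x\Phi_k(x,y)|=g(|x-y|)$ is a radial function of $x-y$, the substitution $z=x-y$ converts this integral into $\int_{\Omega(-y)}g(|z|)\,dz$ over a translate of $\Omega$, and because the implicit constant in Lemma~\ref{LEM Phi bounds}$(ii)$ is independent of the location of the fixed argument, the same lemma bounds such a translated integral by $\rho(1+(\rho k)^{(d-1)/2})$ with $\rho=R$ for $(iii)$ and $\rho=\epsilon$ for $(iv)$. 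Multiplying the two Schur bounds and taking a square root yields $(iii)$ at once. For $(iv)$ the elementary inequality $\sqrt{(1+a^2)(1+b^2)}\le(1+a)(1+b)$, applied with $a=k^{(d-1)/4}$ and $b=(\epsilon k)^{(d-1)/4}$, rewrites $\sqrt{(1+k^{(d-1)/2})(1+(\epsilon k)^{(d-1)/2})}$ in the form $(1+k^{(d-1)/4})(1+(\epsilon k)^{(d-1)/4})$ appearing in the claim.

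The only step that is not purely mechanical is the symmetry argument needed for the $x$-integral in Schur's test: Lemma~\ref{LEM Phi bounds}$(ii)$ is phrased as a bound on the $y$-integral with $x$ fixed, so one must explicitly verify via the change of variables $z=x-y$, together with the fact that the constant in the lemma is independent of the location of the fixed argument, that the same bound applies after swapping the roles of $x$ and $y$. Beyond this observation and the small algebraic manipulation used in $(iv)$, the proof is a straightforward application of Cauchy--Schwarz and Schur's test combined with the kernel estimates of Lemma~\ref{LEM Phi bounds}.
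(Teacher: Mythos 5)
Your proof is correct and follows essentially the same route as the paper's: Cauchy--Schwarz plus the $L^2$ kernel bound of Lemma~\ref{LEM Phi bounds}$(i)$ for $(i)$--$(ii)$, and the geometric mean of the two kernel $L^1$-integrals for the gradient parts $(iii)$--$(iv)$. The only cosmetic difference is that you invoke Schur's test directly, whereas the paper obtains the same quantity by bounding the operator on $L^1$ and $L^\infty$ and interpolating (Marcinkiewicz), and your translation argument for the $x$-integral is the change-of-variables form of the paper's identity $\nabla_x\Phi_k(x,y)=-\nabla_y\Phi_k(y,x)$.
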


\begin{proof}

\n We set $\Omega_R = B_R \setminus B_\epsilon$, in particular  $\Omega_2 = B_2 \setminus B_\epsilon$. Note that, by H{\"o}lder's inequality we have

\begin{equation*}
|T_1u(x)| \leq k^2 M_{\epsilon, k} \|u\|_{L^2(\Omega_2)} \|\Phi_k(x,\cdot)\|_{L^2(\Omega_2)},
\end{equation*}

\n which implies the estimate

\begin{equation} \label{T_1u L^2 bound via Phi}
\|T_1u\|_{L^2(\Omega_R)} \leq k^2 M_{\epsilon, k} \|u\|_{L^2(\Omega_2)} \left(\int_{\Omega_R} \|\Phi_k(x,\cdot)\|_{L^2(\Omega_2)}^2 dx \right)^{\frac{1}{2}}.
\end{equation}

\n Using part $(i)$ of Lemma~\ref{LEM Phi bounds}, we obtain that 

\begin{equation*}
\int_{\Omega_R} \int_{\Omega_2} |\Phi_k(x,y)|^2 dy dx \leq |B_R| \sup_{x \in B_R} \int_{B_2} |\Phi_k(x,y)|^2 dy \lesssim
\begin{cases}
1~, \hspace{1.3in} & d=3~,
\\
\min\{1+\ln^2 k, k^{-1}\}~, & d=2~,
\end{cases}
\end{equation*}

\n where the implicit constant depends only on $R$. This concludes the proof of part $(i)$. 

\n The proof of $(ii)$ proceeds analogously, with $B_\epsilon$ in place of $\Omega_R$, and the conclusion follows from the estimate

\begin{equation*}
\begin{split}
\int_{B_\epsilon} \|\Phi_k(x,\cdot)\|_{L^2(\Omega_2)}^2 dx &\leq |B_\epsilon| \sup_{x \in B_\epsilon} \int_{B_2 \setminus B_\epsilon} |\Phi_k(x,y)|^2 dy \lesssim \epsilon^d \sup_{x \in B_1} \int_{B_2} |\Phi_k(x,y)|^2 dy  
\\
&\lesssim \epsilon^d 
\begin{cases}
1~, \hspace{1.3in} & d=3~,
\\[.05in]
\min\{1+\ln^2 k, k^{-1}\}~, & d=2~.
\end{cases}
\end{split}
\end{equation*}

The above direct estimation argument cannot be used to bound the $L^2$-norm of $\nabla T_1 u$, as $\nabla T_1 u$ is an integral operator whose kernel is not square integrable. However, we can obtain bounds using interpolation. To this end,  differentiating inside the integral we have

$$\nabla T_1u(x) = \int_{\Omega_2} K(x,y) u(y) dy=:T_1^gu(x)~, \qquad \quad K(x,y) = k^2 (q(y,k)-1) \nabla_x \Phi_k(x,y)~.$$

\n Clearly,

\begin{equation*}
\begin{split}
\|T_1^g u\|_{L^\infty(\Omega_R)} &\leq \sup_{x \in \Omega_R} \int_{\Omega_2} |K(x,y)| dy \cdot \|u\|_{L^\infty(\Omega_2)} \leq k^2 M_{\epsilon, k} \|u\|_{L^\infty(\Omega_2)} \sup_{x \in \Omega_R} \int_{\Omega_2} |\nabla_x \Phi_k(x,y)| dy~,
\\
\|T_1^g u\|_{L^1(\Omega_R)} &\leq \sup_{y \in \Omega_2} \int_{\Omega_R} |K(x,y)| dx \cdot \|u\|_{L^1(\Omega_2)} \leq k^2 M_{\epsilon, k} \|u\|_{L^1(\Omega_2)} \sup_{y \in \Omega_2} \int_{\Omega_R} |\nabla_x \Phi_k(x,y)| dx~.
\end{split}
\end{equation*}

\n Using part $(ii)$ of Lemma~\ref{LEM Phi bounds}, we get

\begin{equation*}
\sup_{x \in \Omega_R} \int_{\Omega_2} |\nabla_x \Phi_k(x,y)| dy \leq \sup_{x \in B_R} \int_{B_2} |\nabla_x \Phi_k(x,y)| dy \lesssim 1 + k^{\frac{d-1}{2}}~,
\end{equation*}

\n and noting that $\nabla_x \Phi_k(x,y) = - \nabla_y \Phi_k(y,x)$, we similarly get 

\begin{equation*}
\sup_{y \in \Omega_2} \int_{\Omega_R} |\nabla_x \Phi_k(x,y)| dx \leq \sup_{y \in B_2} \int_{B_R} |\nabla_y \Phi_k(y,x)| dx \lesssim 1 + k^{\frac{d-1}{2}}~,
\end{equation*}

\n where the implicit constants depend only on $R$. Thus we obtain that $T_1^g: L^1(\Omega_2) \to L^1(\Omega_R)$ and $T_1^g:L^\infty(\Omega_2) \to L^\infty(\Omega_R)$ both have operator norms bounded by $C k^2(1 + k^{\frac{d-1}{2}}) M_{\epsilon, k}$, where $C$ is a constant depending only on $R$. The Marcinkiewicz interpolation theorem \cite{GRAF} now implies that $T_1$ maps $L^2(\Omega_2)$ into $L^2(\Omega_R)$ with the operator norm bound

\begin{equation*}
\|T_1^g\|_{L^2(\Omega_2) \to L^2(\Omega_R)} \leq 2\sqrt{2} C k ^2(1 + k^{\frac{d-1}{2}}) M_{\epsilon, k}~, 
\end{equation*}

\n which concludes the proof of part $(iii)$.

\n The proof of part $(iv)$ proceeds analogously, with $B_\epsilon$ in place of $\Omega_R$. Part $(ii)$ of Lemma~\ref{LEM Phi bounds} implies the estimate 

\begin{equation*}
\sup_{y \in \Omega_2} \int_{B_\epsilon} |\nabla_x \Phi_k(x,y)| dx \lesssim \epsilon (1 + (\epsilon k)^{\frac{d-1}{2}}),
\end{equation*}

\n with an absolute implicit constant. We then conclude that

\begin{equation*}
\|T_1^g\|_{L^\infty(\Omega_2) \to L^\infty(B_\epsilon)} \leq C k^2(1+k^{\frac{d-1}{2}}) M_{\epsilon, k} 
\quad \text{and} \quad
\|T_1^g\|_{L^1(\Omega_2) \to L^1(B_\epsilon)} \leq C k^2 \epsilon (1 + (\epsilon k)^{\frac{d-1}{2}}) M_{\epsilon, k},
\end{equation*}

\n where $C$ is an absolute constant. Again using the Marcinkiewicz interpolation theorem we obtain

\begin{equation*}
\|T_1^g\|_{L^2(\Omega_2) \to L^2(B_\epsilon)} \leq 2 \sqrt{2} C k^2 M_{\epsilon, k} \left[ \epsilon (1+k^{\frac{d-1}{2}}) (1+(\epsilon k)^{\frac{d-1}{2}}) \right]^{\frac{1}{2}}.
\end{equation*}

\end{proof}

\begin{lemma} \label{LEM T2 bounds}
Let $T_2$ be defined by \eqref{T2}, $k_0>0$ and $R>1$. Suppose $0< \epsilon k < k_0$ and let $M_{\epsilon,k}$ be given by \eqref{M}. Then for any $u \in L^2(B_2 \setminus B_\epsilon)$

\begin{equation}
\|T_2 u \|_{L^2(B_R \setminus B_\epsilon)} \lesssim k^2 M_{\epsilon, k} \|u\|_{L^2(B_2 \setminus B_\epsilon)}
\begin{cases}
\sqrt{\epsilon}, \hspace{1.3in} & d=3,
\\
\min\{1+|\ln k|, k^{-\frac{1}{4}}\}, & d=2.
\end{cases}
\end{equation}

\n where the implicit constant depends only on $R$ and $k_0$.
\end{lemma}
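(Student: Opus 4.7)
The plan is to exploit the structure of \eqref{T2}: because for each fixed $y\in B_2\setminus\overline{B}_\epsilon$ the function $\Psi_k(\cdot,y)$ is a smooth outward radiating solution of the Helmholtz equation in $\mathbb{R}^d\setminus\overline{B}_\epsilon$ with boundary value $-\Phi_k(\cdot,y)$ on $S_\epsilon$ (cf.\ \eqref{Psi}), integrating against the weight $k^2(q(y,k)-1)u(y)$ shows that $T_2 u$, viewed as a function of $x$, is itself the unique outward radiating solution of the Helmholtz equation in $\mathbb{R}^d\setminus\overline{B}_\epsilon$ whose Dirichlet trace on $S_\epsilon$ equals $-T_1 u\big|_{S_\epsilon}$. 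This reduces the estimate on $T_2 u$ to an exterior Dirichlet scattering problem from the small ball $B_\epsilon$ with boundary data $-T_1 u\big|_{S_\epsilon}$.

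Next, I would apply the up-to-$S_\epsilon$ version of the ball scattering bound recorded in \eqref{u small ball upto S_eps bounds}: with $f_\epsilon(\cdot):=-T_1 u(2\epsilon\,\cdot)\big|_{S_{1/2}}$,
\[
\|T_2 u\|_{L^2(B_R\setminus B_\epsilon)} \lesssim \epsilon^{d-2}\,\|f_\epsilon\|_{H^{1/2}(S_{1/2})}.
\]
By the trace theorem and the scaling $y\mapsto 2\epsilon y$,
\[
\|f_\epsilon\|_{H^{1/2}(S_{1/2})} \lesssim \|T_1 u(2\epsilon\,\cdot)\|_{H^1(B_{1/2})} \lesssim \epsilon^{-d/2}\|T_1 u\|_{L^2(B_\epsilon)} + \epsilon^{1-d/2}\|\nabla T_1 u\|_{L^2(B_\epsilon)},
\]
and both of these quantities are controlled $\epsilon,k$-explicitly by parts $(ii)$ and $(iv)$ of Lemma~\ref{LEM T1 bounds}.

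Feeding those bounds in yields, for $d=3$, an outer factor proportional to $\epsilon\,[1+(1+k^{1/2})(1+(\epsilon k)^{1/2})]\, k^2 M_{\epsilon,k}\|u\|_{L^2(B_2\setminus B_\epsilon)}$; the factor in brackets collapses to $O(\sqrt\epsilon)$ via the identity $\epsilon k^{1/2}=\sqrt{\epsilon k}\,\sqrt{\epsilon}\le \sqrt{k_0}\,\sqrt{\epsilon}$ (and $(\epsilon k)^{1/2}\le\sqrt{k_0}$), giving precisely the $\sqrt\epsilon$ stated in the lemma. For $d=2$ one arrives at the factor $\min\{1+|\ln k|,k^{-1/2}\} + \sqrt\epsilon(1+k^{1/4})(1+(\epsilon k)^{1/4})$; using $(\epsilon k)^{1/4}\le k_0^{1/4}$ together with the key identity $\sqrt\epsilon\,k^{1/4}=(\epsilon k)^{1/2}\,k^{-1/4}\le\sqrt{k_0}\,k^{-1/4}$, this expression is $\lesssim 1+|\ln k|$ for small $k$ and $\lesssim k^{-1/4}$ for large $k$, i.e.\ $\lesssim \min\{1+|\ln k|,k^{-1/4}\}$, matching the claim.

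The only genuinely substantive step is the identification in the first paragraph: once $T_2 u$ is recognized as the radiating solution of the exterior Helmholtz Dirichlet problem with data $-T_1 u\big|_{S_\epsilon}$, the remainder of the proof is essentially bookkeeping, combining the two preceding lemmata with elementary algebra under the constraint $\epsilon k<k_0$. A minor technical point worth verifying carefully is that the boundary trace of $T_2 u$ on $S_\epsilon$ really equals $-T_1 u$ pointwise (or in $H^{1/2}$), which follows from dominated convergence and the smoothness of $\Psi_k(\cdot,y)$ up to $S_\epsilon$ for each $y\in B_2\setminus\overline{B}_\epsilon$.
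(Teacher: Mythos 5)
Your proposal is correct and follows essentially the same route as the paper: you identify $T_2u$ as the outward radiating solution of the exterior Helmholtz Dirichlet problem in $\RR^d\setminus\overline{B}_\epsilon$ with boundary data $-T_1u$ on $S_\epsilon$, apply the up-to-$S_\epsilon$ scattering estimate from a small ball (the first bound in \eqref{u small ball upto S_eps bounds}) together with trace and scaling, invoke parts $(ii)$ and $(iv)$ of Lemma~\ref{LEM T1 bounds}, and close with the elementary manipulations under $\epsilon k < k_0$. The only cosmetic difference is that the paper derives the boundary-value-problem characterization of $T_2u = Tu - T_1u$ from the PDE satisfied by $Tu$, whereas you derive it directly from the defining problem \eqref{Psi} for $\Psi_k$; both are valid and lead to the same bookkeeping.
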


\begin{proof}

\n Let $v= T_2 u$ and $f = -T_1u$, then using that $Tu$ solves the problem \eqref{Tu pde}, we conclude that $v$ is the outward radiating solution to the problem

\begin{equation*}
\begin{cases}
\Delta v + k^2 v = 0, \hspace{0.5in} &\text{in} \quad \RR^d \setminus \overline{B}_\epsilon
\\[.05in]
v = f, &\text{on} \quad S_\epsilon.
\end{cases}
\end{equation*}

\n As before we introduce the notation $f_\epsilon(x) = f(2 \epsilon x)$. Using Corollary \ref{CORO small ball} (and the remark following) specifically the first estimate of (\ref{u small ball upto S_eps bounds}) we now get, for $d=3$,

\begin{equation*}
\begin{split}
\|v\|_{L^2(B_R \setminus B_\epsilon)} \lesssim \epsilon \|f_\epsilon\|_{H^{\frac{1}{2}}(S_\frac12)} &\lesssim \epsilon \| T_1u(2\epsilon \cdot)\|_{H^1(B_\frac12)}
\\
&\lesssim \frac{1}{\sqrt{\epsilon}} \|T_1u\|_{L^2(B_\epsilon)} + \sqrt{\epsilon} \|\nabla T_1u\|_{L^2(B_\epsilon)} 
\\[.01in]
&\lesssim \epsilon k^2 M_{\epsilon,k} (1+\sqrt{k}) \|u\|_{L^2(B_2 \setminus B_\epsilon)}~,
\end{split}
\end{equation*}

\n where in the last step we used the parts $(ii)$ and $(iv)$ of Lemma~\ref{LEM T1 bounds}. To conclude the proof, it remains to observe that $\epsilon (1+ \sqrt{k}) \lesssim \sqrt{\epsilon}$, due to the bound $\epsilon k <k_0$.

\n Similarly, for $d=2$ we have

\begin{equation*}
\begin{split}
\|v\|_{L^2(B_R \setminus B_\epsilon)} &\lesssim \|f_\epsilon\|_{H^{\frac{1}{2}}(S_\frac12)} \lesssim \| T_1u(2\epsilon \cdot)\|_{H^1(B_\frac12)}  
\\
&\lesssim \frac{1}{\epsilon} \|T_1u\|_{L^2(B_\epsilon)} + \|\nabla T_1u\|_{L^2(B_\epsilon)}
\\[.01in]
&\lesssim k^2 M_{\epsilon,k} C_{\epsilon, k} \|u\|_{L^2(B_2 \setminus B_\epsilon)}~,
\end{split}
\end{equation*}

\n where

$$C_{\epsilon, k} = \min\{1+|\ln k|, k^{-\frac{1}{2}}\} + \sqrt{\epsilon} (1+k^{\frac{1}{4}})~.$$

\n Using that $\epsilon<1$ and $\epsilon k \leq k_0$ we have $\sqrt{\epsilon} \lesssim \min\{1,k^{-\frac{1}{2}}\}$, which then implies

$$C_{\epsilon, k} \lesssim \min\{1+|\ln k|, k^{-\frac{1}{2}}\} + (1+k^{\frac{1}{4}}) \min\{1,k^{-\frac{1}{2}}\} \lesssim \min\{1+|\ln k|, k^{-\frac{1}{4}}\}$$

\n This completes the proof of Lemma \ref{LEM T2 bounds}.
\end{proof}
\section*{Appendix}
\subsection*{Discreteness of the transmission eigenvalues}

Here we prove part $(iii)$ of Theorem~\ref{THM transmission}. The proof is based on the lemma below.

\begin{lemma} \label{LEM discrete}
Assume $\sqrt{2} k_\epsilon > 1$, let $R>0$ be large, and $h, h_0 >0$ be small enough, such that, with the notation $k=a+ib$, the following sets are nonempty

\begin{equation*}
\begin{split}
\CR_{R,h} &= \{k \in \mathbb{C}: |k|<R\} \cap \left( \{k: a,b>0\} \cup \left\{k: a > \max\{|b|, \sqrt{b^2+b+k_\epsilon^2}\} + h\right\} \right)
\\[.1in]
\CL_{R,h, h_0} &= \{k \in \mathbb{C}: |k|<R\} \cap \left( \{k: a<0, b<-\tfrac{1}{2}-h_0\} \cup \left\{k: a < - \max\{|b|, \sqrt{b^2+b+k_\epsilon^2}\} - h\right\} \right)
\\[.1in]
\CU_{R,h} &= \{k \in \mathbb{C}: |k|<R\} \cap \left( \left\{k: b>|a|+h\right\}  \cup \left\{k: b < -|a|-h \ \text{and} \ b > -\tfrac{1}{2} \left(k_\epsilon^2 + \tfrac{1}{2}\right) \right\} \right)
\end{split}
\end{equation*}

\n Then the interior transmission eigenvalues of \eqref{trans change of var} that lie inside $\CR_{R,h} \cup \CL_{R,h, h_0} \cup \CU_{R,h}$ form a discrete set in (i.e., an at most countable set  with no limit points in $\CR_{R,h} \cup \CL_{R,h, h_0} \cup \CU_{R,h}$).
\end{lemma}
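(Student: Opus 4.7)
My plan is to apply an analytic Fredholm theorem for operator-valued analytic families of Fredholm operators of index zero (Steinberg / Gohberg--Sigal type) to the operator $\mathcal{S}(k)\in \mathcal{L}(X,X^*)$ defined by \eqref{variational form} via $\langle \mathcal{S}(k) u,\varphi\rangle = a(k,u,\varphi)$. Since $\tfrac{1}{1-q(x,k)} = p(k)/|\det DF(x)|$ with $p(k) := k^2 + ik - k_\epsilon^2$ is polynomial in $k$, and $|\det DF|$ is smooth and bounded between positive constants on $\mathcal{O}$, the family $\mathcal{S}(k)$ is entire in $k$, and the transmission eigenvalues of \eqref{trans change of var} are precisely the values of $k$ at which $\mathcal{S}(k)$ fails to be invertible. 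To get discreteness on each connected component of $\mathcal{D}:=\CR_{R,h}\cup \CL_{R,h,h_0}\cup \CU_{R,h}$, it suffices to show (i) $\mathcal{S}(k)$ is Fredholm of index zero for every $k\in \mathcal{D}$, and (ii) every connected component of $\mathcal{D}$ contains some $k_0\in \mathbb{R}\cup i\mathbb{R}$, at which by Lemma~\ref{LEM not trans eval} $\mathcal{S}(k_0)$ is injective and thus, in view of (i), invertible.

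For (i) I split $a = a_p + a_c$, with
$$a_c(k, u, \varphi) = -\bigl(\tau(k) + k^4\bigr)\int_{B_2} u\,\bar\varphi\,dx,$$
inducing a compact operator $X\to X^*$ by the Rellich compactness of $X\hookrightarrow L^2(B_2)$, and
$$a_p(k, u, \varphi) = \int_{\mathcal{O}}\mu(k,x)(\Delta u + k^2 u)(\Delta\bar\varphi + k^2\bar\varphi)\,dx + k^2\int_{B_2}\nabla u\cdot\overline{\nabla\varphi}\,dx + \tau(k)\int_{B_2} u\,\bar\varphi\,dx,$$
where $\mu(k,x) = p(k)/|\det DF(x)|$ and $\tau(k)\in \mathbb{C}$ is at our disposal. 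I will show that, for each $k\in \mathcal{D}$, one can pick a rotation $\theta=\theta(k)\in \mathbb{R}$ and $\tau(k)= M e^{-i\theta(k)}$ with $M>0$ large so that $\mathrm{Re}(e^{i\theta}a_p(k,u,u)) \ge c(k)\,\Vert u\Vert_X^2$ for all $u\in X$. Expanding $(\Delta u+k^2 u)(\Delta\bar u+k^2\bar u) = |\Delta u|^2 + 2 k^2 \mathrm{Re}(\Delta u\,\bar u) + k^4 |u|^2$ and absorbing the cross term by Young's inequality against half of the $|\Delta u|^2$ contribution, this coercivity reduces to
$$\mathrm{Re}\bigl(e^{i\theta} p(k)\bigr) > 0 \qquad \text{and} \qquad \mathrm{Re}\bigl(e^{i\theta} k^2\bigr) > 0,$$
with $M$ large enough to compensate the leftover contributions to the $\Vert u\Vert_{L^2(B_2)}^2$ coefficient. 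Generalized Lax--Milgram then makes $A(k):X\to X^*$ (the operator associated with $a_p$) an isomorphism, and hence $\mathcal{S}(k) = A(k) - K(k)$ is Fredholm of index zero.

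The case analysis for $\theta(k)$, with $k = a + ib$, uses $\mathrm{Re}(k^2) = a^2 - b^2$, $\mathrm{Im}(k^2) = 2ab$, $\mathrm{Re}(p(k)) = a^2 - b^2 - b - k_\epsilon^2$, and $\mathrm{Im}(p(k)) = a(2b+1)$. In $\CR_{R,h}\cap\{a,b>0\}$ both imaginary parts are strictly positive, so $\theta=-\pi/2$ works; in the remaining piece of $\CR_{R,h}$ the defining inequalities give $\mathrm{Re}(k^2), \mathrm{Re}(p(k)) > 0$, so $\theta=0$ works. In $\CL_{R,h,h_0}\cap\{a<0,\,b<-\tfrac12-h_0\}$, $a<0$ and $2b+1<-2h_0<0$ give $\mathrm{Im}(k^2), \mathrm{Im}(p(k)) > 0$, and $\theta=-\pi/2$ works again; in the other piece of $\CL_{R,h,h_0}$, $\theta=0$. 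In both pieces of $\CU_{R,h}$, $|a|<|b|$ gives $\mathrm{Re}(k^2)<0$; in the upper piece ($b>0$) trivially $\mathrm{Re}(p(k))=(a^2-b^2)-b-k_\epsilon^2<0$, and in the lower piece the bound $|b|<(k_\epsilon^2+1/2)/2$ together with $k_\epsilon^2 > 1/2$ gives $-b-k_\epsilon^2 < -k_\epsilon^2/2 + 1/4 < 0$, so $\mathrm{Re}(p(k))<0$ as well. Hence $\theta=\pi$ works in both pieces of $\CU_{R,h}$.

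Finally, $\mathcal{D}$ has exactly two connected components: $\mathcal{C}_1 := \CR_{R,h}\cup(\CU_{R,h}\cap\{b>0\})$, joined through the non-empty overlap $\{a>0,\,b>a+h\}\cap\{|k|<R\}$; and $\mathcal{C}_2 := \CL_{R,h,h_0}\cup(\CU_{R,h}\cap\{b<0\})$, joined via the non-empty overlap of $\CL_{R,h,h_0}$'s first piece with the lower piece of $\CU_{R,h}$ (non-empty thanks to $\sqrt{2}k_\epsilon>1$, which implies $-(k_\epsilon^2+1/2)/2<-\tfrac12-h_0$ for small $h_0$). Each component meets $\mathbb{R}\cup i\mathbb{R}$: $\mathcal{C}_1$ contains the real segment $(k_\epsilon+h, R)\subset \CR_{R,h}$, and $\mathcal{C}_2$ contains both the real segment $(-R, -k_\epsilon-h)\subset \CL_{R,h,h_0}$ and the imaginary segment $\{-ib: h<b<(k_\epsilon^2+1/2)/2\}\subset \CU_{R,h}$. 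By Lemma~\ref{LEM not trans eval}, $\mathcal{S}(k)$ is injective (hence invertible) at these points, and the analytic Fredholm theorem applied to each component finishes the proof. The principal technical obstacle is the T-coercivity step: performing the Young absorption of the cross term $2\mathrm{Re}(e^{i\theta}\mu k^2)\mathrm{Re}(\Delta u\,\bar u)$ while keeping the $\Vert u\Vert^2_{L^2(B_2)}$ coefficient positive requires $M$ to dominate a constant depending on $\Vert \mu k^2\Vert_{L^\infty(\mathcal{O})}^2$ and the positive lower bound of $\mathrm{Re}(e^{i\theta}\mu)$ on $\mathcal{O}$.
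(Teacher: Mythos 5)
Your proof is correct and follows essentially the same strategy as the paper: add a large auxiliary parameter to split the variational form into a coercive part (giving an isomorphism by Lax--Milgram) plus a compact part, so that the associated operator is an analytic family of Fredholm operators of index zero, and then invoke analytic Fredholm theory. Your rotation-by-$e^{i\theta(k)}$ framing is just a unified way of writing what the paper does region by region (imaginary parts on the first pieces of $\CR$ and $\CL$, real parts on the second pieces, negative real parts on $\CU$), and your explicit verification that each connected component of $\CR_{R,h}\cup\CL_{R,h,h_0}\cup\CU_{R,h}$ meets $\RR\cup i\RR$ makes precise the reference-point step that the paper leaves implicit when it combines Lemma~\ref{LEM not trans eval} with the Fredholm alternative.
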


\vspace{.1in}

\n To see that this lemma concludes the proof of Theorem~\ref{THM transmission} consider the following unions:

\begin{equation*} 
\CR = \bigcup_{R=1}^\infty \bigcup_{n=1}^\infty \CR_{R, \frac{1}{n}},
\qquad \qquad
\CL = \bigcup_{R=1}^\infty \bigcup_{n=1}^\infty \bigcup_{m=1}^\infty \CL_{R, \frac{1}{n}, \frac{1}{m}},
\qquad \qquad
\CU = \bigcup_{R=1}^\infty \bigcup_{n=1}^\infty \CU_{R, \frac{1}{n}},
\end{equation*}

\n Lemma \ref{LEM discrete} guarantees the discreteness of the interior transmission eigenvalues inside the union of these sets. We note that

\begin{equation*}
\begin{split}
\CR &= \{k: a,b>0\} \cup \left\{k: a > \max(|b|, \sqrt{b^2+b+k_\epsilon^2}) \right\}
\\
\CL &= \{k: a<0, b<-\tfrac{1}{2}\} \cup \left\{k: a < - \max(|b|, \sqrt{b^2+b+k_\epsilon^2}) \right\}
\\
\CU &= \left\{k: |b| > |a| \ \text{and} \ b > -\tfrac{1}{2} \left(k_\epsilon^2 + \tfrac{1}{2}\right) \right\}. 
\end{split}
\end{equation*}

\n Finally, the symmetry of the set of interior transmission eigenvalues implies that the discreteness also holds in $-\overline{\CR}$ and $-\overline{\CL}$, where the bar denotes complex conjugation. Since $\CG\subset \CR\cup \CL \cup \CU \cup -\overline{\CR} \cup -\overline{\CL}\cup \RR\cup i\RR$, a combination of these discreteness results and (i) of Theorem \ref{THM transmission} yields the proof of the last assertion in Theorem \ref{THM transmission} .

\begin{proof}[Proof of Lemma~\ref{LEM discrete}]
We start by showing the discreteness in the sets $\CR_{R,h}, \CL_{R,h, h_0}$, which are open, connected and disjoint. Let $\lambda = \lambda_1 + i \lambda_2 \in \mathbb{C}$ with $\lambda_1, \lambda_2>0$ to be chosen later. Consider the bounded sesquilinear forms on $X$ (cf. \eqref{X}) given by

\begin{equation*}
\begin{split}
\CA_k(u,\varphi) &= \int_{\mathcal O} \frac{1}{1-q} \left( \Delta u + k^2u \right) \left( \Delta \overline{\varphi} + k^2 \overline{\varphi} \right) dx + k^2  \int_{B_2} \nabla u \cdot \nabla \overline{\varphi} dx + \lambda \int_{B_2} u \overline{\varphi} dx,
\\[.05in]
\CB_k(u,\varphi) &= -(k^4+\lambda) \int_{B_2} u \overline{\varphi} dx
\end{split}
\end{equation*}

\n In terms of $\CA_k$ and $\CB_k$, the variational form of the interior transmission eigenvalue problem \eqref{variational form} reads: $\CA_k(u,\varphi)+\CB_k(u,\varphi)=0$ for all $\varphi \in X$. Since $\CB_k$ yields a compact operator, the discreteness of these eigenvalues, in the regions where both $\CA_k$ and $\CB_k$ depend analytically on $k$, will follow from the  Analytic Fredholm Theory, as in \cite[Section 8.5]{CK19}, once we prove that $\lambda = \lambda(R,h,h_0)$ can be chosen such that $\CA_k$ becomes coercive \cite{CCH12}. For shorthand let us introduce the notation

\begin{equation*}
\frac{1}{1-q(x,k)} = \gamma_k p(x)~, \qquad \quad \gamma_k = k^2+ik-k_\epsilon^2 \quad \text{and} \quad p(x) = \frac{1}{\det DF(x)}~.
\end{equation*}

\n Let $m, M$ be such that

$$0 < m \leq p(x) \leq M, \qquad \qquad \forall x \in B_2 \setminus \overline{B_\epsilon}={\mathcal O}~.$$

\n We consider cases:

\vspace{.1in}
$\bullet$ Let $k = a + i b$ be such that $|k|<R$ and (I) $a>0, \ b > 0$ or (II) $a<0, \ b< -\tfrac{1}{2} - h_0$.

\n Then

\begin{eqnarray} \label{A_k(u,u) for complex k}
\CA_k (u, u) &&= \gamma_k \int_ {\mathcal O} p |\Delta u|^2 dx  + k^2 \|\nabla u\|^2_{B_2} + 2\gamma_k k^2 \int_ {\mathcal O} p \re (\overline{u} \Delta u) dx \nonumber \\
&&\hskip 40pt+ \gamma_k k^4 \int_ {\mathcal O} p |u|^2 dx + \lambda \|u\|^2_{B_2}~,   
\end{eqnarray}

\n where we use the notation $\|u\|_\Omega = \|u\|_{L^2(\Omega)}$. In both cases (I) and (II) we see that

\begin{equation*}
\im \gamma_k = a (2b + 1)>0, \quad \text{and} \quad \im (k^2) = 2ab>0~.
\end{equation*}

\n Hence

\begin{equation*}
\begin{split}
|\im \CA_k (u, u)| \geq &  \im \gamma_k \int_{\mathcal O} p |\Delta u|^2 dx  + \im (k^2) \|\nabla u\|^2_{B_2} + \lambda_2 \|u\|^2_{B_2} -
\\
&-2|\im (\gamma_k k^2)| \cdot \left| \int_ {\mathcal O} p \re (\overline{u} \Delta u) dx \right| - |\im (\gamma_k k^4)| \int_ {\mathcal O} p |u|^2 dx~.
\end{split}
\end{equation*}

\n Let us use the lower bound $p \geq m$ in the first integral. In the integral of the term $2 \re (\overline{u} \Delta u )$ we use H{\"o}lder's inequality along with the estimate $p\leq M$, and then apply Cauchy's inequality with $\delta>0$ to the resulting term. The result becomes

\begin{equation*}
\begin{split}
|\im \CA_k (u, u)| \geq& (m \im \gamma_k - 2M |\im (\gamma_k k^2)| \delta) \|\Delta u\|^2_{\mathcal O} + \im (k^2) \|\nabla u\|^2_{B_2} +
\\
&+\left( \lambda_2 - M|\im (\gamma_k k^4)| - \tfrac{M |\im (\gamma_k k^2)|}{2\delta} \right) \|u\|^2_{\mathcal O}
\end{split}
\end{equation*}

\n If $\im (\gamma_k k^2) = 0$, then coercivity follows for any $\lambda_2 > \max\{M|\im(\gamma_k k^4)|: |k|<R\}$. Otherwise, let us choose $\delta$ such that $4M |\im (\gamma_k k^2)| \delta = m \im \gamma_k$. Then the first term of above inequality is positive and the third term will be positive if

\begin{equation*} 
\lambda_2 > \sup \left\{ M|\im (\gamma_k k^4)| + \frac{2M^2 \left[ \im (\gamma_k k^2) \right]^2}{m \im \gamma_k} : |k|<R \ \text{and} \ \ \text{(I)  or  (II)  holds} \right\}~.
\end{equation*}

\n It remains to see that the above supremum is finite. The first term inside the supremum is bounded and establishing the boundedness of the second term amounts to showing that

\begin{equation*}
\frac{\left[ \im (\gamma_k k^2) \right]^2}{\im \gamma_k} = \frac{a \left[ 4a^2 b - 4b^3 -2b k_\epsilon^2 + a^2 - 3b^2 \right]^2}{2b+1}
\end{equation*}
is bounded. Clearly, in the case (I) this is bounded with a constant depending only on $R$ and $k_\epsilon$, and in the case (II) it is bounded with a constant depending on $R,k_\epsilon$ and $h_0$. Finally, the coercivity follows upon applying Poincare's inequality as $X \subset H^1_0(B_2)$.

\vspace{.1in}
$\bullet$ Let $k = a + i b$ be such that $|k|<R$ and 

\begin{equation} \label{|a| > max}
|a| > \max \left\{|b|, \sqrt{b^2 + b + k_\epsilon^2}\right\} + h~.
\end{equation}

\n Then

\begin{equation*}
\re \gamma_k = a^2 - (b^2 + b + k_\epsilon^2)>0, \quad \text{and} \quad \re (k^2) = a^2 - b^2>0~.
\end{equation*}

\n Repeating the argument of the previous case, only taking real parts in \eqref{A_k(u,u) for complex k}, we obtain that coercivity follows after choosing

\begin{equation*}
\lambda_1 > \sup \left\{ M|\re (\gamma_k k^4)| + \frac{2M^2 \left[ \re (\gamma_k k^2) \right]^2}{m \re \gamma_k} : |k|<R \ \ \text{and} \ \ \eqref{|a| > max} \ \text{holds} \right\}~.
\end{equation*}

\n Clearly this supremum is finite as $\re \gamma_k > h^2$.

It remains to prove the discreteness in the set $\CU_{R,h}$. This can be done analogously, only now $\lambda$ in the sesquilinear forms must be chosen to be a real and negative number with very large absolute value. Coercivity then follows by deriving a lower bound on $|\re {\CA}_k (u, u)|$.

\end{proof}

\section*{Acknowledgments} 
The research of F. Cakoni was partially supported by the AFOSR Grant FA9550-20-1-0024 and  NSF Grant DMS-2106255. The research of MSV was partially supported by NSF Grant DMS-22-05912.  

\bibliographystyle{plain}
\bibliography{ref}
\end{document}